\newtheorem{theor}{Theorem}[section]
\newtheorem{cor}[theor]{Corollary}
\newtheorem{lemma}[theor]{Lemma}
\newtheorem{prop}[theor]{Proposition}
\newtheorem{property}[theor]{Property}
\theoremstyle{definition}
\newtheorem{defn}{Definition}
\theoremstyle{remark}
\newtheorem{step}{Step}
\newtheorem{case}{Case}
\newtheorem{remark}[theor]{Remark}
\numberwithin{equation}{section}
\numberwithin{defn}{section}
\newcommand{\N}{\mathbb{N}}        
\newcommand{\R}{\mathbb{R}}        
\newcommand{\C}{\mathbb{C}}        
\newcommand{\PR}{\mathbb P^2(\mathbb R)}
\DeclareMathOperator{\Id}{Id}
\DeclareMathOperator{\tr}{tr}
\DeclareMathOperator{\dist}{dist}
\renewcommand{\d}{\mathrm{d}}
\newcommand{\abs}[1]{\left| #1 \right|}
\newcommand{\norm}[1]{\left\| #1 \right\|}
\newcommand{\NN}{\mathscr N}
\newcommand{\Sz}{\mathbf{S}_0}
\newcommand{\Qe }{Q_\varepsilon}
\newcommand{\Pe}{P_\varepsilon}
\newcommand{\ue}{u_\varepsilon}
\newcommand{\un}{u_{\varepsilon_n}}
\newcommand{\vn}{v_{\varepsilon_n}}
\newcommand{\dn}{\sigma_{\varepsilon_n}}
\newcommand{\ee}{e_\varepsilon}
\begin{document}
\title[Biaxiality in a 2D Landau-de Gennes model]{Biaxiality in the asymptotic analysis of a 2D Landau-de Gennes model for liquid crystals} 
\author{Giacomo Canevari}
\address{Sorbonne Universit\'es,
  UPMC --- Universit\'e Paris 06,
  UMR 7598, Laboratoire Jacques-Louis Lions,
  \mbox{F-75005}, Paris, France,
  {\tt canevari@ann.jussieu.fr}}
%
\date{December 12, 2013}
\begin{abstract} 
We consider the Landau-de Gennes variational problem on a bound\-ed, two dimensional domain, subject to Dirichlet smooth boundary conditions.
We prove that minimizers are maximally biaxial near the singularities, that is, their biaxiality parameter reaches the maximum value $1$.
Moreover, we discuss the convergence of minimizers in the vanishing elastic constant limit. Our asymptotic analysis is performed in a general setting, which 
recovers the Landau-de Gennes problem as a specific case.
 \end{abstract}
%
%
\subjclass[2010]{35J25, 35J61, 35B40, 35Q70.} 
\keywords{Landau-de Gennes model, $Q$-tensor, convergence, biaxiality.}
\maketitle

\section{Introduction}

Nematic liquid crystals are an intermediate phase of matter, which shares some properties both with solid and liquid states.
They are composed by rigid, rod-shaped molecules which can flow freely, as in a conventional liquid, but tend to align locally along some directions, thus recovering,
to some extent, long-range orientational order. As a result, liquid crystals behave mostly like fluids, but exhibit anisotropies with respect to some optical or
electromagnetic properties, which makes them suitable for many applications.

In the mathematical and physical literature about liquid crystals, different continuum theories have been proposed. 
Some of them --- like the Oseen-Frank and the Ericksen theories --- postulate that, at every point, the locally preferred direction of molecular alignment is unique: 
such a behavior is commonly referred to as \emph{uniaxiality}, and materials which exhibit such a property are said to be in the uniaxial phase. 
In contrast, the Landau-de Gennes theory, which is considered here, allows \emph{biaxiality}, that is, more than one preferred direction of molecular orientation might coexist at some point. There is experimental evidence for the existence of thermotropic biaxial phases, that is, biaxial phases whose transitions are induced by temperature (see \cite{Madsen, Prasad}).

In the Landau-de Gennes theory (or, as it is sometimes informally called, the $Q$-tensor theory), the local configuration of the liquid crystal is modeled with a real $3 \times 3$ symmetric traceless matrix $Q(x)$, depending on the position $x$.
The configurations are classified according to the eigenvalues of $Q$. More precisely, $Q = 0$ corresponds to an isotropic phase (i.e., completely lacking of orientational order), matrices $Q \neq 0$ with two identical eigenvalues represent uniaxial phases, and matrices whose eigenvalues are pairwise distinct describe biaxial phases.
Every $Q$-tensor can be represented as follows:
\begin{equation} \label{intro repr}
 Q = s \left\{ \left(n^{\otimes 2} - \frac13\Id\right) + r \left(m^{\otimes 2} - \frac13\Id\right)\right\}
\end{equation}
with $0\leq r \leq 1$, $s\geq 0$ and $(n, \, m)$ is a positively oriented orthonormal pair in $\R^3$. The parameters $s$ and $r$ are respectively related to the modulus and the biaxiality of $Q$ (in particular, $Q$ is uniaxial if and only if $r\in \{0, \, 1\}$).
 
Here, we consider a two-dimensional model. The material is contained in a bounded, smooth domain $\Omega\subseteq\R^2$, subject to smooth Dirichlet boundary conditions. The configuration parameter $Q$ is assumed to minimize the Landau-de Gennes energy functional, which can be written, in its simplest form, as
\begin{equation} \label{energy 1}
 E_\varepsilon(Q) = \int_\Omega \left\{ \frac12 \abs{\nabla Q}^2 + \frac{1}{\varepsilon^2} f(Q) \right\} .
\end{equation}
Here, $\abs{\nabla Q}^2 = \sum_{i, j, k} Q_{ij, \, k}^2$ is a term penalizing the inhomogeneities in space, and $f$ is the bulk potential, given by
\begin{equation} \label{energy 2}
 f(Q) = \frac{\alpha(T - T^*)}{2}\tr Q^2 - b\tr Q^3 + c\left(\tr Q^2\right)^2 .
\end{equation}
The parameters $\alpha, \, b$ and $c$ depend on the material, $T$ is the absolute temperature, which we assume to be constant, and $T^*$ is a characteristic temperature of the liquid crystal.
We work here in the low temperature regime, that is, $T < T^*$. It can be proved (see \cite[Proposition 9]{BM}) that $f$ attains its minimum on a manifold $\NN$, termed the vacuum manifold, whose elements are exactly the matrices having $s = s_*$, $r = 0$ in the representation formula \eqref{intro repr} ($s_*$ is a parameter depending only on $\alpha, \, b, \, c, \, T$).
The potential energy $\varepsilon^{-2}f(Q)$ can be regarded as a penalization term, associated to the constraint $Q\in \NN$. In particular, as we will explain further on, biaxiality is penalized.
The parameter $\varepsilon^2$ is a material-dependent elastic constant, which is typically very small (of the order of $10^{-11}$ Jm$^{-1}$): this motivates our interest in the limit as $\varepsilon \searrow 0$.

\begin{figure}[t] \label{cone}
\begin{center}
\includegraphics[width =.43\textwidth, keepaspectratio = true]{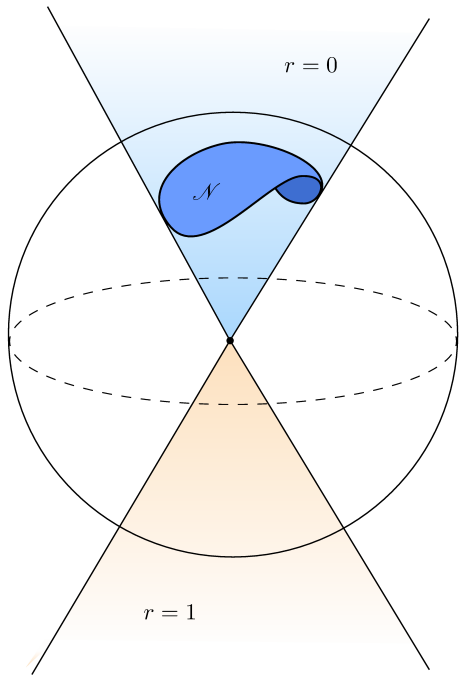}
\end{center} 
\caption{The space of $Q$-tensors. The unit sphere and the uniaxial cones, corresponding to $r = 0$ and $r = 1$, are represented. The vacuum manifold is the intersection between the sphere and the cone $r = 0$.}
\end{figure}

Due to the form of the functional \eqref{energy 1}, there are some similarities between this problem and the Ginzburg-Landau model for superconductivity, where the configuration space is the complex field $\C \simeq \R^2$, the energy is given by
\[
 E_\varepsilon(u) = \int_\Omega \left\{ \frac12 \abs{\nabla u}^2 + \frac{1}{4\varepsilon^2} \left(1 - \abs{u}^2\right)^2 \right\}
\]
and the vacuum manifold is the unit circle. The convergence analysis for this model is a widely addressed issue in the literature (see, for instance, \cite{BBH} for the study of the 2D case).
A well-known phenomenon is the appearance of the so-called topological defects. Depending on the homotopic properties of the boundary datum, there might be an obstruction to the existence of smooth maps $\Omega \to\NN$.
Boundary data for which this obstruction occurs will be referred to as homotopically non trivial (see Subsection~\ref{subsect: cost} for a precise definition). In this case, the image of minimizers fails to lie close to the vacuum manifold on some small set which correspond, in the limit as $\varepsilon\searrow 0$, to the singularities of the limit map.

In the Ginzburg-Landau model, the whole configuration space $\C$ can be recovered as a topological cone over the vacuum manifold. In other words, every configuration $u\in\C\setminus\{0\}$ is identified by its modulus and phase, the latter being associated with an element of the vacuum manifold.
Defects are characterized as the regions where $\abs{u}$ is small. This structure is found in other models: for instance, let us mention the contibution of D. Chiron (\cite{Ch}), who replaced~$\C$ by a cone over a generic compact, connected manifold. 

In contrast, this property is lost in the Landau-de Gennes model \eqref{energy 1}--\eqref{energy 2}. As a result, for the minimizers $\Qe$ of the Landau-de Gennes functional several behaviors near the singularities are possible. 
For instance, one might ask whether the image of $\Qe$ lies entirely in the cone over the vacuum manifold or not. In view of the representation formula \eqref{intro repr}, these alternatives correspond, respectively, to uniaxiality and biaxiality.

Numerical simulations suggest that we might expect biaxiality in the core of singularities. Schopohl and Sluckin (see \cite{SS}) claimed that the core is heavily biaxial at all temperatures, and that it does not contain isotropic liquid.
In the 3D case, a special biaxial configuration, known as ``biaxial torus'', has been identified in the core of point defects (see \cite{GM, KV, KVZ, SKH}).
Gartland and Mkaddem (\cite{GM}) proved  that, when $\Omega = B(0, \, R)\subseteq \R^3$ with $R$ large enough, and the boundary data is radially symmetric, the radially symmetric uniaxial configurations become unstable for sufficiently low temperature, hence minimizers cannot be purely uniaxial.
Similar conclusions have been drawn by Henao and Majumdar (\cite{HM}), by Ignat et al. (\cite{INSZ}) and, in the 2D case, by Lamy (\cite{Lamy}). 

However, in all these works, a function is said to be ``purely uniaxial'' when the parameter $r$ in \eqref{intro repr} is identically equal either to $0$ or $1$. 
Therefore, these results do not exclude the existence of an ``almost uniaxial'' minimizer, for which $r$ is very close to zero but vanishes nowhere (except, possibly, on a negligible set).

To overcome this issue, the notion of maximally biaxial configuration is introduced in Subsection~\ref{standard LG}.
One could define it as a configuration for which $r(\Qe(x)) = 1/2$ holds, at some point~$x\in\Omega$. The value $1/2$, being equidistant from $0$ and $1$, might be thought as the maximum degree of biaxiality. We are able to prove that minimizers are maximally biaxial, 
in the low temperature regime $T \ll T_*$. More precisely, we have the following

\begin{theor} \label{thm: intro-biaxial}
Assume that the boundary datum is a smooth function $g\colon \partial\Omega \to \NN$ and is not homotopically trivial. Then, there exist $t_0 > 0$ and $\varepsilon_0 = \varepsilon_0(\alpha, \, b, \, c, \, T_* - T)$ such that the two conditions
\[
 \frac{\alpha c(T_* - T)}{b^2} \geq t_0 \qquad \textrm{and} \qquad \varepsilon \leq \varepsilon_0
\]
imply that any minimizer $Q_\varepsilon$ of the Landau-de Gennes model \eqref{energy 1}--\eqref{energy 2} is maximally biaxial and satisfies
\begin{equation} \label{no isotropy}
 \inf_{\overline\Omega} \abs{Q_\varepsilon} > 0 .
\end{equation}
\end{theor}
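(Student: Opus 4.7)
My plan is to establish the nonvanishing statement \eqref{no isotropy} by an energy comparison, and then to deduce maximal biaxiality from a purely topological argument. The low-temperature hypothesis $\alpha c(T^*-T)/b^2 \geq t_0$ enters only in the first step.

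For the upper bound I would construct an admissible competitor $\Pe$ with $\Pe|_{\partial\Omega}=g$, singular on a finite set of points realising the class of $g$ in $\pi_1(\NN)$, equal to an $\NN$-valued extension away from the singularities and, inside balls of radius $\sim\varepsilon$ around each of them, equal to the biaxial ``escape'' profile documented in the 3D studies of Gartland--Mkaddem, Ignat et al. and in Lamy's 2D analysis cited in the introduction. This profile passes through the maximally biaxial stratum $\{r=1/2\}$ but stays uniformly away from $Q=0$, which yields
\begin{equation*}
 E_\varepsilon(\Qe) \;\leq\; E_\varepsilon(\Pe) \;\leq\; \kappa_{\mathrm{bx}}\abs{\log\varepsilon} + C,
\end{equation*}
with $\kappa_{\mathrm{bx}}=\kappa_{\mathrm{bx}}(\alpha,b,c,T^*-T)$ the biaxial core cost. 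For the matching lower bound I would argue by contradiction: if $\Qe(x_0)=0$, then the rescaled maps $x\mapsto\Qe(x_0+\varepsilon x)$ converge, along a subsequence, to an entire profile $\R^2\to\Sz$ vanishing at the origin; a Ginzburg--Landau-type monotonicity and clearing-out argument, applied to the topological cone over $\NN$ in the spirit of Chiron's framework, then forces $E_\varepsilon(\Qe) \geq \kappa_{\mathrm{un}}\abs{\log\varepsilon}-C$, where $\kappa_{\mathrm{un}}$ is the cost of resolving the defect through the isotropic point $Q=0$. Comparing the two bounds produces the contradiction $\kappa_{\mathrm{un}}\leq\kappa_{\mathrm{bx}}$ provided the strict inequality $\kappa_{\mathrm{un}}>\kappa_{\mathrm{bx}}$ is available, which is exactly what the assumption $t:=\alpha c(T^*-T)/b^2\geq t_0$ must provide.

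Once $\inf_{\overline\Omega}\abs{\Qe}>0$ is known, maximal biaxiality follows from topology alone. Assume for contradiction that $r(\Qe(x))\neq 1/2$ for every $x\in\overline\Omega$. The biaxiality parameter $r$ is continuous on $\{Q\neq 0\}$, and the boundary condition $r(g)\equiv 0$ together with $\inf_{\overline\Omega}\abs{\Qe}>0$ forces $r(\Qe)<1/2$ throughout $\overline\Omega$ by continuity. The image of $\Qe$ then lies in the open set $U:=\{Q\in\Sz\setminus\{0\}\colon r(Q)<1/2\}$, which deformation retracts onto $\NN$ via the identity on $\NN$ --- linearly send the biaxial correction to zero while keeping the dominant eigenvector fixed and the dominant eigenvalue positive, then rescale the remaining uniaxial modulus to $s_*$. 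Composition of $\Qe$ with this retraction yields a continuous extension $\overline\Omega\to\NN$ of $g$, contradicting the non-triviality of $g$ in $\pi_1(\NN)=\mathbb Z/2$.

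The main obstacle is the strict gap $\kappa_{\mathrm{un}}>\kappa_{\mathrm{bx}}$ together with its quantitative dependence on $t$. As $t$ grows, $f$ develops a deep well on $\NN$ while its saddle along the sphere $\{\abs Q=s_*\}$ stays moderate, so the biaxial core cost $\kappa_{\mathrm{bx}}$ remains controlled whereas the uniaxial core cost $\kappa_{\mathrm{un}}$, forced to traverse the potential barrier at $Q=0$, diverges. The threshold $t_0$ is precisely the value at which this gap opens up, and pinning it down requires a careful analysis and comparison of the optimal radial profiles in the two classes.
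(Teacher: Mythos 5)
Your overall architecture coincides with the paper's: an energy comparison to rule out zeros of $\Qe$, followed by a retraction argument showing that a nonvanishing map with $r\neq 1/2$ everywhere would continuously extend $g$ into $\NN$. The second half is essentially the paper's Lemma~\ref{lemma: retract} and is sound. The gap is in the first half, and it is not a technicality: you place the energy gap between the two resolutions at the level of the coefficients of $\abs{\log\varepsilon}$, positing $\kappa_{\mathrm{un}}>\kappa_{\mathrm{bx}}$. In fact both coefficients equal the same topological constant $\kappa_*=3\pi/4$: the $\abs{\log\varepsilon}$ term is paid in the annulus where the map is essentially $\NN$-valued and is dictated by the homotopy class of $g$ alone, independently of how the core is resolved; a vanishing competitor also achieves $\kappa_*\abs{\log\varepsilon}+O(1)$. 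So comparing leading coefficients yields no contradiction for any $t$, and the quantity you call $\kappa_{\mathrm{un}}$ does not diverge as $t\to\infty$. The entire content of the theorem lives in the $\varepsilon$-independent core terms: the paper proves (Lemma~\ref{lemma: f below LG}) the two-sided bound $\mu_1(1-\abs{Q})^2+\sigma\beta(Q)\abs{Q}^3\leq f^*(Q)\leq\mu_2(1-\abs{Q})^2+2\sigma\beta(Q)\abs{Q}^3$ with $\mu_1/a=O(1)$ and $\sigma/a=O(t^{-1/2})$, then shows via a coarea-formula argument (Proposition~\ref{prop: lower LG}) that any admissible map with a zero costs at least $\kappa_*\abs{\log\varepsilon}+\tfrac{\kappa_*}{2}\log\mu_1-M_1$, while an explicit biaxial competitor with core radius $\sigma^{-1/2}\varepsilon$ (not $\varepsilon$; this scaling is what produces the $\log\sigma$ term) costs at most $\kappa_*\abs{\log\varepsilon}+\tfrac{\kappa_*}{2}\log\sigma+M_2$. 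The contradiction then comes from $\log\mu_1-\log\sigma\to+\infty$ as $t\to\infty$, which fixes $t_0$.

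Your proposed mechanism for the lower bound is also unlikely to work as stated: $\Sz$ is not a topological cone over $\NN$ (the paper stresses precisely this point in the introduction), so Chiron's clearing-out framework does not transfer directly, and a blow-up at a zero of $\Qe$ only sees the scale $\varepsilon$ and cannot by itself produce the material-parameter-dependent additive constant that the comparison requires. The missing ingredient is the coarea argument on the level sets of $\abs{Q}$, which converts the lower bound $f^*(Q)\geq\mu_1(1-\abs{Q})^2$ into a quantitative statement that $\mathrm{rad}(\{\abs{Q}<t\})\lesssim\varepsilon\mu_1^{-1/2}$ at the optimal balance, whence the $\tfrac{\kappa_*}{2}\log\mu_1$ in the lower bound. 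That step, together with the $t$-asymptotics of $\mu_1$ and $\sigma$, is the technical heart of the proof and is absent from your plan.
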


Theorem~\ref{thm: intro-biaxial} prevents the isotropic phases ($Q = 0$) from appearing in minimizers, at the low temperature regime. This is a remarkable difference between the Landau-de Gennes theory and the popular Ericksen model for liquid crystals: in the latter, defects are always associated with isotropic melting, since biaxiality is not taken into account. Remark that Theorem~\ref{thm: intro-biaxial} is in agreement with the conclusions of \cite{SS}.

The proof of this result relies on energy estimates. With the help of the coarea formula, we are able to bound from below the energy of any uniaxial configuration. Then, we provide an explicit example of maximally biaxial solution, whose energy is smaller than the bound we have obtained, and we conclude that uniaxial minimizers cannot exist.

Another topic we discuss in this paper is the convergence of minimizers as $\varepsilon\searrow 0$. It turns out that a convergence result for the minimizers of \eqref{energy 1}--\eqref{energy 2} can be established without any need to exploit the matricial structure of the configuration space, nor the precise shape of $f$ and $\NN$.
For this reason, we introduce a more general problem, where the set of matrices is replaced by the Euclidean space $\R^d$, $\NN$ is any compact, connected submanifold of $\R^d$, and $f\colon \R^d \to [0, +\infty)$ is a smooth function, vanishing on $\NN$, which satisfies the assumptions \ref{hp: f}--\ref{hp: N} listed in Section~\ref{sect: assumptions}.
To avoid confusion, we denote by $u\colon \Omega \to \R^d$ the unknown for the new problem, and we let $\ue$ be a minimizer.

\begin{prop} \label{prop: intro-defects}
Assume that conditions \ref{hp: f}--\ref{hp: N} hold. There exist some $\varepsilon$-independent constants $\lambda_0, \, \delta_0 > 0$  and, for each $\delta \in (0, \, \delta_0)$, a finite set $X_\varepsilon = X_\varepsilon(\delta) \subset \Omega$, whose cardinality is bounded independently of $\varepsilon$, such that
 \[
  \dist(x, \, X_\varepsilon) \geq \lambda_0 \varepsilon \qquad \textrm{implies that} \qquad \dist(u_\varepsilon(x), \, \NN) \leq \delta .
 \]
\end{prop}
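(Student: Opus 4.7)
The proposition is an $\varepsilon$-scale defect-localization result of the type established by Bethuel, Brezis and Hélein for Ginzburg--Landau; my plan is to adapt their strategy to a general target $\NN$. Two preliminary ingredients feed into the final argument. A uniform $L^\infty$ bound $\|u_\varepsilon\|_\infty \leq M$ should follow from assumption \ref{hp: f} (which I expect encodes coercivity such as $f(u) \gtrsim |u|^2$ for $|u|$ large): replacing $u_\varepsilon$ by its radial truncation onto a large ball cannot increase the Dirichlet term and, by coercivity, cannot increase the potential either, so the truncation is an admissible competitor enforcing the bound. Second, an upper energy estimate $E_\varepsilon(u_\varepsilon) \leq \Lambda |\log\varepsilon| + C$ follows from an explicit competitor that equals the boundary datum outside a finite collection of disks of radius $\varepsilon$ and is suitably filled in inside.

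The key local tool is a \emph{clearing-out} (small-energy regularity) lemma: there exist constants $\eta_0, \lambda_0 > 0$, depending only on $f$ and $\NN$, such that whenever $B(x_0, \lambda_0\varepsilon) \subset \Omega$,
\begin{equation*}
\int_{B(x_0, \lambda_0 \varepsilon)}\!\!\left\{\tfrac12 |\nabla u_\varepsilon|^2 + \tfrac{1}{\varepsilon^2} f(u_\varepsilon)\right\} \leq \eta_0
\ \Longrightarrow\ \dist(u_\varepsilon(x_0), \NN) \leq \delta.
\end{equation*}
The proof proceeds by rescaling: $\tilde u(y) := u_\varepsilon(x_0 + \varepsilon y)$ solves a uniformly elliptic Euler--Lagrange system on $B(0, \lambda_0)$ with right-hand side $\nabla f(\tilde u)$, which is bounded thanks to the $L^\infty$ estimate. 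Standard interior elliptic regularity yields Hölder control on $\tilde u$; the small-energy hypothesis then forces $f(\tilde u(0))$ to be small, and assumption \ref{hp: N} (nondegeneracy of $f$ transverse to $\NN$) converts this into $\dist(\tilde u(0), \NN) \leq \delta$.

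Using this lemma, the bad set $\{x : \dist(u_\varepsilon(x), \NN) > \delta\}$ is covered by balls $B(x, \lambda_0\varepsilon)$ carrying local energy $\geq \eta_0$; a Vitali selection produces the finite set $X_\varepsilon$ satisfying the required distance property. The main obstacle is upgrading the crude count $|X_\varepsilon| \leq E_\varepsilon(u_\varepsilon)/\eta_0 = O(|\log\varepsilon|)$ to a bound \emph{independent} of $\varepsilon$. For this I would invoke a Sandier--Jerrard-type ball-growth procedure: starting from the bad balls, merge and inflate in pairs whenever they collide, stopping at a fixed radius $r_0$; in every annulus disjoint from $X_\varepsilon$, a topological lower bound of the form
\begin{equation*}
\int_{B(a, R) \setminus B(a, r)} \tfrac12 |\nabla u_\varepsilon|^2 \geq \kappa_* \log(R/r) - C
\end{equation*}
holds, with $\kappa_* > 0$ the minimal topological cost attached to $\pi_1(\NN)$ from Subsection~\ref{subsect: cost}. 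Summing over the dyadic annuli used in the growth, each final ball carries energy at least $\kappa_* \log(r_0/\varepsilon) - C$; comparison with the upper bound of step one caps the number of clusters by $\Lambda/\kappa_* + O(1)$, as required.
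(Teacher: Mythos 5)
Your preliminary steps ($L^\infty$ bound by truncation, the $\kappa_*\abs{\log\varepsilon}+C$ upper bound, a clearing-out lemma at scale $\lambda_0\varepsilon$) match the paper's Lemma~\ref{lemma: L infty}, Lemma~\ref{lemma: energy estimate} and Proposition~\ref{prop: clearing out} (note, though, that the coercivity you need for the truncation is \ref{hp: growth}, not \ref{hp: f}, and the transverse nondegeneracy is \ref{hp: df}, not \ref{hp: N}, which only concerns conjugacy classes of $\pi_1(\NN)$). The genuine gap is in your final step, the upgrade from $O(\abs{\log\varepsilon})$ bad balls to an $\varepsilon$-independent count. A Sandier--Jerrard ball-growth comparison against the global upper bound only controls clusters around which the map $\pi(u_\varepsilon)$ is homotopically nontrivial: the annulus lower bound is $\lambda_*(\gamma)\log(R/r)$ with $\gamma$ the class on the annulus, and this vanishes for topologically neutral clusters (dipole-type configurations), so your claim that ``each final ball carries energy at least $\kappa_*\log(r_0/\varepsilon)-C$'' fails precisely for the clusters you need to exclude; a priori there could be $O(\abs{\log\varepsilon})$ bad balls each costing only $\eta_0$ and carrying no logarithmic cost. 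Moreover, even if the number of final clusters (of fixed radius $r_0$) were bounded, that would not bound the cardinality of $X_\varepsilon$ at scale $\lambda_0\varepsilon$: the statement requires that \emph{every} point at distance $\geq\lambda_0\varepsilon$ from $X_\varepsilon$ be good, so you must count the $\varepsilon$-scale bad balls inside each cluster, about which the growth procedure says nothing.

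The paper closes exactly this gap with an ingredient absent from your plan: a localized Pohozaev-type estimate (Lemma~\ref{lemma: pohoz} and Lemma~\ref{lemma: local estimate}) giving
$\varepsilon^{-2}\int_{B(x_0,\varepsilon^\alpha)\cap\Omega} f(u_\varepsilon)\leq C_\alpha$ on balls of radius $\varepsilon^\alpha$, independently of $\varepsilon$. Since the clearing-out criterion of Proposition~\ref{prop: clearing out} is formulated in terms of the \emph{potential} energy alone (each bad $\lambda_0\varepsilon$-ball costs at least $\mu_0$ of potential energy), this local bound caps the number of $\varepsilon$-scale bad balls inside each $\varepsilon^\alpha$-ball, regardless of their topological charge; the number of bad $\varepsilon^\alpha$-balls is in turn bounded by combining the $\eta$-compactness statement (Proposition~\ref{prop: concentration}) with the global $\kappa_*\abs{\log\varepsilon}$ bound, via a two-scale Vitali covering. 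To repair your argument you would need either this potential-energy counting (the route of \cite{BBH}, \cite{BR}, \cite{Str}) or some other mechanism ruling out many neutral low-energy bad spots; the energy comparison alone cannot do it.
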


The set $X_\varepsilon$ is empty if and only if the boundary datum is homotopically trivial. In the Landau-de Gennes case \eqref{energy 1}--\eqref{energy 2}, Proposition~\ref{prop: intro-defects} and Theorem~\ref{thm: intro-biaxial}, combined, show that a minimizer $\Qe$ is ``almost uniaxial'' everywhere, except on $k$ balls of radius comparable to~$\varepsilon$, where biaxiality occurs. Actually, we will prove that $k = 1$ (see Proposition~\ref{prop: intro-geodetica}).

We can show that the minimizers converge, as $\varepsilon\searrow 0$, to a map taking values in $\NN$, having a finite number of singularities. Moreover, due to the variational structure of the problem, the limit map is optimal, in some sense, with respect to the Dirichlet integral $v \mapsto \frac12 \int_\Omega \abs{\nabla v}^2$. 

\begin{theor} \label{th: intro-convergence}
 Under the assumptions \ref{hp: f}--\ref{hp: N}, there exists a subsequence of $\varepsilon_n \searrow 0$, a finite set $X \subset \Omega$ and a function $u_0\in C^\infty( \Omega \setminus X, \, \NN)$
 such that 
 \[
  \un \to u_0  \qquad \textrm{strongly in } H^1_{\textrm{loc}} \cap C^0 (\Omega\setminus X, \, \R^d) .
 \]
 On every ball $B \subset\subset \Omega\setminus X$, the function $u_0$ is minimizing harmonic, which means
 \[
  \frac12 \int_B \abs{\nabla u_0}^2 = \min \left\{ \frac12 \int_B \abs{\nabla v}^2 \colon v\in H^1(B, \, \NN), \: v = u_0 \textrm{ on } \partial B  \right\} .
 \]
\end{theor}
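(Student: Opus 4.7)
The plan is to mimic the Bethuel--Brezis--H\'elein strategy: extract the defect set, establish weak $H^1_{\text{loc}}$ convergence, identify the limit as a minimizing harmonic map via a comparison argument, and finally upgrade regularity.

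First I would extract $X$. Fix a small $\delta \in (0, \delta_0)$ and apply Proposition~\ref{prop: intro-defects} to obtain finite sets $X_{\varepsilon_n}$ of uniformly bounded cardinality. Up to a subsequence, assume $\abs{X_{\varepsilon_n}}$ is constant and the points converge in $\overline{\Omega}$; define $X \subset \Omega$ as the collection of limit points lying in the interior (points collapsing onto $\partial\Omega$ would be handled separately by boundary regularity coming from the smoothness of $g$). On any compact $K \subset\subset \Omega \setminus X$, for $n$ large one has $\dist(\un(x), \NN) \leq \delta$. Combined with the a priori energy bound $E_{\varepsilon_n}(\un) = O(\abs{\log \varepsilon_n})$ (which follows from standard upper-bound constructions on the homotopy class of $g$) and interior estimates for the Euler--Lagrange equation, this yields uniform $H^1_{\text{loc}} \cap L^\infty_{\text{loc}}$ bounds on $\Omega \setminus X$. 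Extracting once more, $\un \rightharpoonup u_0$ weakly in $H^1_{\text{loc}}(\Omega\setminus X)$; since $\varepsilon_n^{-2}\int f(\un)$ is bounded, $f(\un) \to 0$ in $L^1_{\text{loc}}(\Omega\setminus X)$, and because $f^{-1}(0) = \NN$ the limit $u_0$ takes values in $\NN$ almost everywhere.

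Next I would establish the minimizing property on balls and the strong $H^1_{\text{loc}}$ convergence simultaneously. Fix $B = B(x_0, R) \subset\subset \Omega \setminus X$ and a competitor $v \in H^1(B, \NN)$ with trace $u_0$ on $\partial B$. By a Fubini argument, pick a radius $r$ slightly less than $R$ at which $\un|_{\partial B_r} \to u_0|_{\partial B_r}$ strongly in $H^{1/2}(\partial B_r)$. Build $\tilde u_n$ on $B$ by setting $\tilde u_n = v$ on $B_r$, and in the thin annulus $B \setminus B_r$ defining $\tilde u_n$ as the radial linear interpolation between the two traces, composed with the nearest-point retraction $\pi_\NN$ onto $\NN$, which is well defined since both traces stay close to $\NN$. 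A direct estimate gives $E_{\varepsilon_n}(\tilde u_n; B) \leq \tfrac12 \int_{B_r} \abs{\nabla v}^2 + o(1)$. Minimality of $\un$ and weak lower semicontinuity then yield
\begin{equation*}
\tfrac12 \int_B \abs{\nabla u_0}^2 \leq \liminf_n \tfrac12 \int_B \abs{\nabla \un}^2 \leq \tfrac12 \int_B \abs{\nabla v}^2,
\end{equation*}
which is the minimality of $u_0$ on $B$. Choosing $v = u_0$ forces convergence of the Dirichlet energies, upgrading weak to strong $H^1_{\text{loc}}$ convergence (and giving $\varepsilon_n^{-2}\int_B f(\un) \to 0$ as a bonus).

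For smoothness and $C^0_{\text{loc}}$ convergence, I would invoke H\'elein's theorem: every weakly harmonic map from a 2D domain into a compact Riemannian manifold is smooth, so $u_0 \in C^\infty(\Omega \setminus X, \NN)$. To upgrade strong $H^1_{\text{loc}}$ to $C^0_{\text{loc}}$ convergence, I would exploit the Euler--Lagrange equation $-\Delta \un = -\varepsilon_n^{-2} \nabla f(\un)$ together with the bound $\abs{\nabla f(\un)} \lesssim \dist(\un, \NN)$ near $\NN$ (provided by \ref{hp: f}). A small-energy $\eta$-regularity argument on balls of scale $\sim 1$ away from $X$, or a Moser iteration, should give uniform $C^{1,\alpha}_{\text{loc}}(\Omega \setminus X)$ bounds on $\un$, after which Ascoli--Arzel\`a concludes.

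The main obstacle I anticipate is the annular interpolation: one must choose the radius $r$ carefully (via Fubini and the energy bound) so that the radial interpolant stays inside the tubular neighborhood where $\pi_\NN$ is smooth, and so that the bulk contribution $\varepsilon_n^{-2}\int f(\tilde u_n)$ on the thin annulus is indeed $o(1)$. A secondary difficulty is the uniform $C^{1,\alpha}$ estimate required to pass from $H^1$ to $C^0$ convergence in the presence of the singular perturbation $\varepsilon_n^{-2} f$; this likely requires using the smallness of $\dist(\un, \NN)$ quantitatively, rather than a naive scaling of the equation.
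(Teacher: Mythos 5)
Your overall architecture --- extract the defect set, obtain weak $H^1_{\text{loc}}$ convergence, identify the limit through a gluing/comparison competitor, then upgrade to $C^0$ by an $\eta$-regularity argument --- is the right one, and the last two steps are close in spirit to the paper (which identifies the limit by comparing $\un$ with a corrected harmonic extension of $\pi(\un)|_{\partial D}$ and invoking Luckhaus's compactness theorem, and gets locally uniform convergence from the Chen--Struwe estimate). But there is a genuine gap at the foundation: the locally uniform $H^1$ bound away from $X$. You assert it follows from the global bound $E_{\varepsilon_n}(\un)=O(\abs{\log\varepsilon_n})$ ``combined with interior estimates for the Euler--Lagrange equation''. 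It does not: the global energy diverges, and the pointwise information available away from $X_{\varepsilon_n}$ (Proposition~\ref{prop: palle}) only gives $\ee(\ue)\leq C_\alpha\varepsilon^{-4\alpha}$, which is useless for an $\varepsilon$-independent local bound. What is actually required is Proposition~\ref{prop: upper estimate}: the energy on $\Omega'\setminus\bigcup_i B(a_i,\rho)$ is at most $\kappa_*\abs{\log\rho}+C$ uniformly in $n$. This is obtained by subtracting from the global upper bound (Lemma~\ref{lemma: energy estimate}) a matching lower bound concentrated in the balls $B(a_i,\rho)$, and that lower bound is the technical heart of the section: a Sandier-type estimate for $\NN$-valued maps on perforated domains expressed through $\lambda_*$ and the polygroup product (Lemma~\ref{lemma: sand} --- this is precisely where \ref{hp: N}, the subadditivity \eqref{lambda * <} and the inequality $\kappa_*\leq\sum_i\kappa_{i,n}$ enter, none of which your proposal uses), together with Lemma~\ref{lemma: near singularities}, which transfers the estimate from $\vn=\pi(\un)$ back to $\un$ via the bound $\int\dn\abs{\nabla\vn}^2\leq C$, itself requiring a Gagliardo--Nirenberg and elliptic-regularity argument near the bad balls. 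Without this block there is no weak $H^1_{\text{loc}}$ limit $u_0$ to work with, and everything downstream is unsupported.

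A secondary, repairable issue: your competitor $\tilde u_n$ is not well defined as written. Setting $\tilde u_n=v$ on $B_r$ leaves you interpolating, on the annulus, between $v|_{\partial B_r}$ and $\un|_{\partial B_R}$ (or $\un|_{\partial B_r}$), two traces with no reason to be uniformly close, since $v$ agrees with $u_0$ only on $\partial B_R$; the projected interpolant then need not remain in the tubular neighborhood where $\pi$ is defined. The standard fix is to rescale $v$ into $B_r$ so that the inner trace of the transition layer is $u_0|_{\partial B_R}$, and to use the compactness of $H^1(\partial B_r)\hookrightarrow C^0(\partial B_r)$ on a good circle to make the two traces uniformly close; moreover the potential term of the layer is $o(1)$ only if the layer has width $O(\varepsilon_n)$, as in the paper's auxiliary function $\varphi_\epsilon$ in Lemma~\ref{lemma: comparison}. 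This is exactly the Luckhaus-type gluing the paper carries out, so your route here is essentially equivalent; it is the missing energy-concentration argument of the first paragraph that sinks the proposal as written.
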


In particular, $u_0$ is a solution of the harmonic map equation
\[
 \Delta u_0 (x) \perp T_{u_0(x)} \NN \qquad \textrm{for all } x\in\Omega\setminus X ,
\]
where $T_{u_0(x)} \NN$ is the tangent plane of $\NN$ at the point $u_0(x)$ and the symbol $\perp$ denotes orthogonality.

We can provide some information about the behavior of $u_0$ around the singularity. For the sake of simplicity, we assume here that $\NN$ is the real projective plane $\PR$ (this is the case, for instance, of the Landau-de Gennes potential \eqref{energy 2}); however, the analytic tools we employ carry over to a general manifold.

\begin{prop} \label{prop: intro-geodetica}
 In addition to \ref{hp: f}--\ref{hp: g}, assume $\NN \simeq \PR$ and the boundary datum is not homotopically trivial (see Definition~\ref{def: trivial}). Then, $X$ reduces to a singleton $\{a\}$. For $\rho\in (0, \, \dist(a, \, \partial\Omega))$, consider the function $S^1 \to \NN$ given by
 \[
  c_\rho \colon \theta \mapsto u_0\left(a + \rho e^{i\theta}\right) .
 \]
 Up to a subsequence $\rho_n\searrow 0$, $c_{\rho_n}$ converges uniformly (and in $C^{0, \, \alpha}$ for $\alpha < 1/2$) to a geodesic~$c_0$ in $\NN$, which minimizes the length among the non homotopically trivial loops in $\NN$.
\end{prop}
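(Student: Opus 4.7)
The plan is to attack the statement in two stages: first locate the singular set as a single point, then perform a blow-up of $u_0$ around it and identify the limit as a minimising geodesic.

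To show $\abs{X} = 1$, I would balance an upper bound on $E_\varepsilon(\ue)$ against a per-defect lower bound. Non-triviality of $g$ together with Theorem~\ref{th: intro-convergence} implies $X \neq \emptyset$, for otherwise $u_0$ would provide a continuous $\NN$-valued extension of $g$. For the upper bound, one constructs a comparison map by extending $g$ smoothly on $\Omega \setminus B_\eta(a_0)$ so that its trace on $\partial B_\eta(a_0)$ realises a minimal non-contractible loop $\gamma_*$ of length $\ell_*$, then filling $B_\eta(a_0)$ radially by a fixed profile joining $\gamma_*$ to a bulk value on scale $\varepsilon$; a direct computation (dominated by the Dirichlet contribution on $B_\eta \setminus B_\varepsilon$) gives $E_\varepsilon(\ue) \leq \ell_*^2(4\pi)^{-1}\abs{\log\varepsilon} + C$. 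For the matching lower bound, each $x \in X$ carries a non-trivial circle loop $\theta \mapsto \ue(x + \rho e^{i\theta})$ on some intermediate scale $\rho \gg \varepsilon$, and a coarea/Cauchy--Schwarz estimate on an annulus contributes at least $\ell_*^2(4\pi)^{-1}\abs{\log\varepsilon} - C$ to the energy per point of $X$. Since $\pi_1(\PR) = \mathbb{Z}/2\mathbb{Z}$ does not let these contributions cancel, only one defect can be present.

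For the blow-up, Theorem~\ref{th: intro-convergence} makes $u_0$ smooth and minimising harmonic on every ball compactly contained in $\Omega\setminus\{a\}$, and passing the previous bounds to the limit yields, on the annulus $A(\rho, R) := B_R(a)\setminus B_\rho(a)$ with $R < \dist(a, \partial\Omega)$ fixed,
\[
 \frac{1}{2}\int_{A(\rho,R)} \abs{\nabla u_0}^2 = \frac{\ell_*^2}{4\pi} \log\frac{R}{\rho} + O(1) \qquad \textrm{as } \rho \searrow 0 .
\]
The coarea decomposition in polar coordinates centred at $a$, combined with $L(c_r)^2 \leq 2\pi\int_0^{2\pi}\abs{c'_r}^2\,\d\theta$ and the homotopy-invariant bound $L(c_r)\geq \ell_*$ (using continuity of $u_0$ on the annulus and the fact that the defect is non-removable), provides a matching lower bound. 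Comparing the two forces the asymptotic saturation of both inequalities: as $r \searrow 0$, $L(c_r) \to \ell_*$, $\abs{c'_r}$ becomes asymptotically constant in $\theta$, and the radial derivative $\partial_r u_0$ is negligible on small annuli. Along a suitable sequence $\rho_n \searrow 0$ this produces a uniform $H^1(S^1, \NN)$ bound, and the compact embedding $H^1(S^1)\hookrightarrow C^{0,\alpha}(S^1)$ for $\alpha < 1/2$ extracts a uniform limit $c_0$ with length $\ell_*$, constant speed, and non-trivial homotopy class --- hence a minimising geodesic loop in $\NN$.

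The main technical obstacle is tightening the inequality chain in the blow-up so that $c_0$ is actually length-minimising, and not merely a critical loop: one must control the $O(1)$ error against the logarithmic leading order and show that the angular defect $2\pi\int_0^{2\pi}\abs{c'_r}^2\,\d\theta - L(c_r)^2$ vanishes in a Ces\`aro-in-$\log(1/r)$ sense, so that the subsequence extracted by Arzel\`a--Ascoli concentrates on a genuine length-minimiser rather than an arbitrary closed geodesic in the non-trivial class.
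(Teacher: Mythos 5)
Your overall strategy is the paper's own: energy quantization to reduce to a single defect, then matching logarithmic bounds on annuli around $a$, polar coordinates plus Cauchy--Schwarz, and the compact embedding $H^1(S^1,\NN)\hookrightarrow C^{0,\alpha}$, $\alpha<1/2$, to extract the geodesic. But the second stage has a genuine gap exactly where you flag the ``main technical obstacle'', and it is not optional: from the matching bounds you cannot conclude, as you assert, that $L(c_r)\to\ell_*$ and that $\abs{c_r'}$ becomes asymptotically constant as $r\searrow0$ --- convergence of the full family $\{c_\rho\}$ is precisely what the paper states is open. What the matching bounds do give is an integral statement: writing $S(\rho):=\frac{1}{2\rho}\int_{\partial B_\rho}\abs{\nabla_T u_0}^2\,\d\mathcal H^1$, one has $S(\rho)\geq\kappa_*=\ell_*^2/(4\pi)$ for \emph{every} $\rho$ (Cauchy--Schwarz plus non-triviality of $c_\rho$), while the upper bound $\frac12\int_{B_R\setminus B_\rho}\abs{\nabla u_0}^2\leq\kappa_*\log(R/\rho)+C$ yields $\int_0^R\rho^{-1}\left(S(\rho)-\kappa_*\right)\d\rho<\infty$. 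Since $\rho^{-1}$ is not integrable at $0$, this forces $\liminf_{\rho\to0}S(\rho)=\kappa_*$, and that is all that is needed: choose $\rho_n\searrow0$ with $S(\rho_n)\to\kappa_*$; then $(c_{\rho_n})$ is a minimizing sequence for $c\mapsto\int_{S^1}\abs{c'}^2$ among non-contractible loops, it is bounded in $H^1$, a subsequence converges weakly in $H^1$ and uniformly, uniform convergence preserves the non-trivial free homotopy class, and weak lower semicontinuity shows the limit attains the minimum --- hence it is a constant-speed, length-minimizing geodesic. With only a uniform $H^1$ bound along the subsequence, as in your extraction step, the Arzel\`a--Ascoli limit need not be a geodesic at all, let alone a minimizing one; and no Ces\`aro-type vanishing of the ``angular defect'' is required, because the proposition only claims subsequential convergence.

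A smaller point in the first stage: your per-defect lower bound presupposes that every point of $X$ carries a non-trivial loop on intermediate scales. In the paper this is not automatic: limit points whose local class is trivial are shown to be removable (a comparison map with no topological obstruction gives an $O(1)$ energy bound on the ball, and the Chen--Struwe estimate then keeps $\ue$ within distance $\delta$ of $\NN$ there), after which the quantization $\sum_i\kappa_{i,n}=\kappa_*$ together with $\kappa_{i,n}\in\{0,\kappa_*\}$ (there is a single non-trivial class since $\pi_1(\PR)\simeq\mathbb Z/2\mathbb Z$) gives the singleton. Your energy-counting version is the same mechanism, but the removability of topologically trivial points of $X$ must be argued, not assumed.
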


Unfortunately, we have not been able to prove the convergence for the whole family $(c_\rho)_{\rho > 0}$, which remains still an open question.

A interesting question, related to the topics we discuss in this paper, is the study of the singularity profile for defects in the Landau-de Gennes model. Consider a singular point $a\in X$, and set $P_\varepsilon (x) := Q_\varepsilon(a + \varepsilon x)$ for all $x\in\R^2$ for which this expression is well-defined. Then $P_\varepsilon$ is a bounded family in $L^\infty$ (see Lemma~\ref{lemma: L infty}) and it is clear, by scaling arguments, that
\[
 \norm{\nabla P_\varepsilon}_{L^2(K)} \leq C \qquad \textrm{for all } K \subset\subset\R^2 .
\]
Thus, up to a subsequence, $P_\varepsilon$ converges weakly in $H^1_{\textrm{loc}}(\R^2)$ to some $P_*$. It is readily seen that, for each $R > 0$, $P_*$ minimizes in $B(0, \, R)$ the functional $E_1$ among the functions $P\in H^1(B(0, \, R))$ satisfying $P = P_*$ on $\partial B(0, \, R)$, and consequently it solves in $\R^2$ the Euler-Lagrange equation associated with $E_1$.

A function $P_*$ obtained by this construction is called a singularity profile. 
Understanding the properties of such a profile will lead to a deeper comprehension of what happens in the core of defects, and vice-versa. Remark that, in view of Theorem~\ref{thm: intro-biaxial}, strong biaxiality has to be found in singularity profiles correpsonding to low temperatures.
We believe that the study of these objects will also play an important role in the analysis of the three-dimensional problem. Let us mention here that some results in this direction have been obtained by Henao and Majumdar, in \cite{HM}, where a 3D problem with radial symmetry is considered.
Restricting the problem to the class of uniaxial $Q$-tensors, the authors proved convergence to a radial hedgehog profile.

This paper is organized as follows. In Section~\ref{sect: assumptions} we present in detail our general problem, we set notations, and we introduce some tools for the subsequent analysis.
More precisely, in Subsection~\ref{subsect: cost} we define the energy cost of a defect, while we discuss in Subsection~\ref{subsect: geometric lemma} the nearest point projection on a manifold.
Section~\ref{sect: biaxial} specifically pertains to the $Q$-tensor model, and contains the proof of Theorem~\ref{thm: intro-biaxial}. The asymptotic analysis, with the proof of Proposition~\ref{prop: intro-defects} and Theorem~\ref{th: intro-convergence}, is provided in Section~\ref{sect: convergence}.
Finally, Section~\ref{sect: singularity} deals with Proposition~\ref{prop: intro-geodetica}.

\textbf{Note added in proof.} While preparing this paper, we were informed that Golovaty and Montero (\cite{GolMon}) have recently obtained similar results about the convergence of minimizers in the~$Q$-tensor model.

\section{Setting of the general problem and preliminaries}
\label{sect: assumptions}

As we mentioned in the introduction, our asymptotic analysis will be carried out in a general setting, which recovers the Landau-de Gennes model \eqref{energy 1}--\eqref{energy 2} as a particular case. In this section, we detail the problem under consideration. The unknown is a function $\Omega \to \R^d$, where~$\Omega$ is a smooth, bounded (and possibly not simply connected) domain in $\R^2$. Let $g\colon \partial\Omega \to \R^d$ be a boundary datum, and define the Sobolev space $H^1_g(\Omega, \, \R^d)$ as the set of maps in $H^1(\Omega, \, \R^d)$ which agrees with $g$ on the boundary, in the sense of traces. We are interested in the problem
\begin{equation}
\min_{u\in H^1_g(\Omega; \, \R^d)}E_\varepsilon(u)
\label{energy}
\end{equation}
where 
\[
 E_\varepsilon(u) := E_\varepsilon(u, \, \Omega) = \int_\Omega \left\{\frac12\abs{\nabla u}^2 + \frac{1}{\varepsilon^2} f(u) \right\} 
\]
and $f\colon \R^d \to \R$ is a non negative, smooth function, satisfying the assumptions below. 

The existence of a minimizer for Problem \eqref{energy} can be easily inferred via the Direct Method in the calculus of variations, whereas we do not claim uniqueness. If $\ue$ denotes a
minimizer for $E_\varepsilon$, then $\ue$ is a weak solution of the Euler-Lagrange equation
\begin{equation}
-\Delta \ue + \frac{1}{\varepsilon^2}D f(\ue) = 0 \qquad \textrm{in } \Omega .
\label{EL}
\end{equation}
Via elliptic regularity theory, it can be proved that every solution of \eqref{EL} is smooth. 

\medskip
\textbf{Assumptions on the potential and on the boundary datum.} 
Denote, as usual, by $S^{d-1}$ the unit sphere of $\R^d$, and by $\dist(v, \, N)$ the distance between a point $v\in\R^d$ and a set $N$. We assume that $f\colon \R^d \to \R$ is a 
smooth function (at least of class $C^{2, \, 1}$), satisfying the following conditions:
\begin{enumerate}[label=\textup{ }{(H\arabic*)}\textup{ },ref={(H\arabic*)}]
 \item \label{hp: f} The function $f$ is non-negative, the set $\NN := f^{-1}(0)$ is non- empty, and $\NN$ is a smooth, compact and connected submanifold of $\R^d$, without boundary. We assume that $\NN$ is contained in the closed unit ball of $\R^d$.
 \item \label{hp: df} There exist some positive constants $\delta_0 < 1, \, m_0$ such that, for all $v\in \NN$ and all normal vector $\nu\in\R^d$ to $\NN$ at the point $v$,
\[
 Df(v + t\nu) \cdot \nu \geq m_0 t, \qquad \textrm{if } 0 \leq t \leq \delta_0 .
\]
 \item \label{hp: growth} For all $v\in\R^d$ with $\abs{v} > 1$, we have
\[
 f(v) > f\left(\frac{v}{\abs{v}}\right) .
\]
\end{enumerate}
The set $\NN$ will be referred as the vacuum manifold.
Concerning the boundary datum, we assume
\begin{enumerate}[label=\textup{ }{(H\arabic*)}\textup{ }, ref={(H\arabic*)}, resume] 
 \item \label{hp: g} $g\colon \partial\Omega \to \R^d$ is a smooth function, and $g(x)\in \NN$ for all $x\in\partial\Omega$.
\end{enumerate}

For technical reasons, we impose a restriction on the homotopic structure of $\NN$. A word of clarification: by conjugacy class in a group $G$, we mean any set of the form $\{axa^{-1}\colon a\in G\}$, for $x\in G$.
\begin{enumerate}[label=\textup{ }{(H\arabic*)}\textup{ }, ref={(H\arabic*)}, resume] 
 \item \label{hp: N} Every conjugacy class in the fundamental group of $\NN$ is finite. 
\end{enumerate}

\begin{remark}
\label{remark: hp}
The assumption \ref{hp: df} holds true if, at every point $v\in \NN$, the Hessian matrix $D^2 f(v)$ restricted to the normal space of $\NN$ at $v$ is positive definite. Hence, \ref{hp: df} may be interpreted as a \emph{non-degeneracy} condition for $f$, in the normal directions.
\end{remark}

We can provide a sufficient condition, in terms of the derivative of $f$, for \ref{hp: growth} as well: namely,
\[
 v\cdot D f(v) > 0 \qquad \textrm{for } \abs{v} > 1
\]
(indeed, this implies that the derivative of $t\in [1, \, +\infty) \mapsto f(tv)$ is positive). 
Hypothesis \ref{hp: growth} is exploited uniquely in the proof of the $L^\infty$ bound for the minimizer $\ue$.  

 Assumption \ref{hp: N} is trivially satisfied if the fundamental group $\pi_1(\NN)$ is abelian or finite. This covers many cases, arising from other models in condensed matter physic: Besides rod-shaped molecules in nematic phase, we mention planar spins ($\NN \simeq S^1$) and ordinary spins ($\NN \simeq S^2$), biaxial molecules in nematic phase ($\NN \simeq SU(2)/H$, where $H$ is the quaternion group), superfluid He-3, both in dipole-free and dipole-locked phases ($\NN \simeq (SU(2)\times SU(2))/H$ and $\NN \simeq \mathbb P^3(\R)$, respectively).

\medskip
\textbf{The Landau-de Gennes model.}
In this model, the configuration parameter belongs to the set $\Sz$ of matrices, given by
\[
\Sz := \left\{ Q\in M_3(\R)\colon Q^T = Q  , \: \tr Q = 0 \right\} .
\]
This is a real linear space, whose dimension, due to the symmetry and tracelessness constraints, is readily seen to be five. The tensor contraction $Q:P = \sum_{i, j} Q_{i j}P_{i j}$ defines a scalar product on $\Sz$, and the corresponding norm will be denoted $\abs{\,\cdot\,}$.
Clearly $\Sz$ can be identified, up to an equivalent norm, with the Euclidean space $\R^5$.

The bulk potential is given by
\begin{equation}
f(Q) := k - \frac a2 \tr Q^2 - \frac b3 \tr Q^3 + \frac c4 \left( \tr Q^2\right)^2 \qquad \textrm{for all } Q\in\Sz  ,
\label{LG f}
\end{equation}
where $a, \, b, \, c$ are positive parameters and $k$ is a properly chosen constant, such that $\inf f = 0$. (We have set $a := - \alpha(T - T_*)$ in formula \eqref{energy 2}). It is clear that the minimization problem \eqref{energy} does not depend on the value of $k$. This model is considered in detail, for instance, in \cite{MZ}, where $\Omega$ is assumed to be a bounded domain of~$\R^3$. 

In the Euler-Lagrange equation for this model, $Df$ has to be intended as the intrinsic gradient with respect to $\Sz$. Since the latter is a proper subspace of the $3 \times 3$ real matrices, $Df$ contains an extra term, which acts as a Lagrange multiplier associated with the tracelessness constraint. Therefore, denoting by $\Qe$ any minimizer, Equation \eqref{EL} reads
\begin{equation}
-\varepsilon^2\Delta \Qe  - a \Qe  - b\left\{\Qe ^2 - \frac13(\tr\Qe ^2)\Id\right\} + c\Qe \tr\Qe ^2 = 0 ,
\label{LG EL}
\end{equation}
where $\frac 13b \tr\Qe ^2 \Id$ is the Lagrange multiplier. We will show in Subsection~\ref{standard LG} that this problem fulfills \ref{hp: f}--\ref{hp: N}, and thus can be recovered in the general setting.

\subsection{Energy cost of a defect}
\label{subsect: cost}

By the theory of continuous media, it is well known (see \cite{Mermin}) that topological defects of codimension two are associated with homotopy classes of loops in the vacuum manifold $\NN$. Now, following an idea of \cite{Ch}, we are going to associate to each homotopy class a non negative number, representing the energy cost of the defect.

Let $\Gamma(\NN)$ be the set of free homotopy classes of loops $S^1 \to\NN$, that is, the set of the path-connected components of $C^0(S^1, \, \NN)$ --- here, ``free'' means 
that no condition on the base point is imposed. As is well-known, for a fixed base point $v_0\in \NN$ there exists a one-to-one and onto correspondence between $\Gamma(\NN)$ and the conjugacy classes of the fundamental group $\pi_1(\NN, \, v_0)$. As the latter might not be abelian, the set $\Gamma(\NN)$ is not a group, in general. Nevertheless, the composition of paths (denoted by $*$) induces a map
\begin{equation} \label{product}
\Gamma(\NN)\times \Gamma(\NN) \to \mathscr P\left(\Gamma(\NN)\right), \qquad (\alpha, \, \beta) \mapsto \alpha\cdot \beta
\end{equation}
in the following way: for each $v\in\NN$, fix a path $c_v$ connecting $v_0$ to $v$. Then, for $\alpha$, $\beta\in\Gamma(\NN)$ define
\[
 \alpha \cdot \beta := \left\{ \textrm{homotopy class of the loop } ((c_{f(1)} * f) * \widetilde{c_{f(1)}} ) * ((c_{g(1)} * g) * \widetilde{c_{g(1)}} ) \colon f\in\alpha, \, g\in\beta \right\}  ,
\]
where $\widetilde{c_{f(1)}}, \, \widetilde{c_{g(1)}}$ are the reverse paths of $c_{f(1)}, \, c_{g(1)}$ respectively. If we regard $\alpha$, $\beta$ as conjugacy classes in~$\pi_1(\NN, \, v_0)$, we might check that
\[
\alpha\cdot\beta = \left\{\textrm{conjugacy class of } ab \colon a\in\alpha, \, b\in\beta \right\}
\]
(in particular, we see that $\alpha\cdot\beta$ does not depend on the choice of $(c_v)_{v\in\NN}$). As $\alpha$, $\beta$ are finite, due to \ref{hp: N}, the set $\alpha * \beta$ is finite as well.

The set $\Gamma(\NN)$, equipped with this product, enjoys some algebraic properties, which descend from the group structure of $\pi_1(\NN, \, v_0)$. The resulting structure is referred to as the polygroup of conjugacy classes of $\pi_1(\NN, \, v_0)$, and was first recognized by Campaigne (see \cite{Camp}) and Dietzman (see \cite{Diat}). We remark that, even if $\pi_1(\NN, \, b)$ is not abelian, we have $\alpha \cdot \beta = \beta \cdot \alpha$ for all $\alpha, \, \beta\in \Gamma(\NN)$. This follows from $a b = a(b a) a^{-1}$, which holds true for all $a, \, b\in \pi_1(\NN, \, v_0)$.

The geometric meaning of the map \eqref{product} is captured by the following proposition. By convention, let us set~$\prod_{i = 1}^1 \gamma_i := \{\gamma_1\}$.

\begin{lemma} \label{lemma: extension}
 Let $D$ be a smooth, bounded domain in $\R^2$, whose boundary has $k\geq 2$ connected components, labeled $C_1, \, \ldots, \, C_k$. For all 
 $i = 1, \ldots, k$, let $g_i\colon C_i \to \NN$ be a smooth boundary datum, whose free homotopy class is denoted by $\gamma_i$. If the condition
\begin{equation}
\prod_{i = 1}^h \gamma_i \cap \prod_{i = h+1}^k \gamma_i \neq \emptyset \, 
\label{extension}
\end{equation}
holds for some index $h$, then there exists a smooth function $g\colon \overline D \to \NN$, which agrees with $g_i$ on every $C_i$. Conversely, if  such an extension exists then 
the condition \eqref{extension} holds for all $h \in \{ 1, \, \ldots, \, k\}$.
\end{lemma}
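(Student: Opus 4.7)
I would argue by induction on $k$, the number of boundary components, exploiting the topological structure of $\overline D$ as a planar surface (diffeomorphic to a closed disk with $k-1$ disjoint open disks removed). For the base case $k=2$, $\overline D$ is diffeomorphic to $S^1\times[0,1]$, and the hypothesis $\gamma_1\cap\gamma_2\neq\emptyset$ reduces to $\gamma_1=\gamma_2$; a smooth free homotopy between $g_1$ and $g_2$, transported to $\overline D$ through the diffeomorphism, provides the extension, while conversely the restriction of any smooth extension to the two boundary circles is itself such a homotopy.

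For the inductive step with $k\ge 3$, planarity of $D$ lets me choose a smooth simple closed curve $\Sigma\subset D$ that separates $\overline D$ into two pieces $\overline{D_1},\overline{D_2}$ with $C_1,\dots,C_h\subset\partial D_1$ and $C_{h+1},\dots,C_k\subset\partial D_2$. For the forward implication I would pick a class $\sigma\in\prod_{i=1}^h\gamma_i\cap\prod_{i=h+1}^k\gamma_i$, choose a smooth loop $g_\Sigma\colon\Sigma\to\NN$ of class $\sigma$, observe that the sub-problems on $D_1$ and $D_2$ each inherit their own version of condition~\eqref{extension} (with $\sigma$ playing the role of one boundary class), apply the inductive hypothesis on each side, and glue the two extensions along $\Sigma$. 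For the converse, given a smooth extension $g$, the class $[g|_\Sigma]$ lies both in $\prod_{i=1}^h\gamma_i$ and in $\prod_{i=h+1}^k\gamma_i$: in $\pi_1(\overline{D_j})$ planarity forces $[\Sigma]$ to equal (up to conjugacy and changes of base point) the product of the boundary loops $[C_i]$ enclosed by $\Sigma$, and applying $g_*$ turns this into the corresponding polygroup identity in $\Gamma(\NN)$.

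The principal obstacle is that the splitting strictly reduces the number of boundary components on \emph{both} sides only when $2\le h\le k-2$; the boundary values $h\in\{1,k-1\}$, and in particular the pair-of-pants case $k=3$, require direct treatment. For $k=3$ I would argue by hand: $\overline D$ deformation retracts onto a wedge of two circles, so a triple of representatives $(a_1,a_2,a_3)$ in $\pi_1(\NN,v_0)$ compatible with condition~\eqref{extension} furnishes a homomorphism $\pi_1(\overline D)\to\pi_1(\NN)$, which is realized geometrically by defining $g$ on the spine and extending it across the residual 2-cell through the nullhomotopy of the defining relation. For $k\ge 4$ and $h\in\{1,k-1\}$, I would first enclose two of the ``solo'' components in an auxiliary pair of pants (treated by the case just proved), reducing to an extension problem on a $(k-1)$-holed domain to which the inductive hypothesis applies. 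Throughout, the main technical care is the bookkeeping of orientations of $\Sigma$ and of the $C_i$, which must be chosen so that the planar relations in $\pi_1(\overline{D_j})$ translate faithfully into polygroup identities in $\Gamma(\NN)$.
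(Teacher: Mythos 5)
Your proof is correct at the same level of rigor as the paper's (both leave the orientation conventions behind the classes $\gamma_i$ implicit, and both need a final smoothing/approximation step), but it is organized along a genuinely different route. The paper also cuts $D$ along one separating curve $B$, but it does not induct on $k$: on each piece $D_j$ it extends directly, for any number of boundary circles, by collapsing $D_j$ onto a spine $\Sigma$ made of the circles $C_i$ and connecting arcs, parametrizing $\Sigma$ by a concatenated loop, and pushing forward a loop $\sigma$ freely homotopic to $b$ that realizes the membership of $[b]$ in $\prod_i \gamma_i$; the free homotopy between $b$ and $\sigma$ \emph{is} the extension over the region between $B$ and $\Sigma$. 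In effect the paper proves your ``one class versus all the others'' sub-case directly for arbitrarily many holes, which eliminates the induction and hence your special handling of $h\in\{1,\,k-1\}$, the pair-of-pants base case, and the auxiliary enclosing curve for $k\geq 4$. Your version buys a modular, cell-by-cell picture (the $k=3$ case via the spine makes very transparent where the polygroup condition enters) at the cost of extra case analysis; two small caveats in it are worth recording: the pair of pants has free fundamental group, so the ``residual 2-cell'' exists only if you place the three boundary circles, carrying the actual data $g_i$, in the 1-skeleton together with two connecting arcs --- which is in fact what you need in order to match the data exactly rather than up to free homotopy --- and the map obtained by gluing the two inductively constructed smooth extensions along $\Sigma$ is a priori only continuous across $\Sigma$, so the same standard approximation argument the paper invokes is still needed. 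Your converse is essentially identical to the paper's: both read off that the class of $g$ restricted to the separating curve lies in each of the two products from the free homotopy, inside each planar piece, between that curve and a concatenation of conjugated boundary loops.
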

\proof
Throughout the proof, given a path $c$ we will denote the reverse path by $\widetilde c$.

Assume that \eqref{extension} holds. We claim that the boundary data can be extended continuously on $D$. It is convenient to work out the construction in the subdomain
\[
D' := \left\{ x\in D \colon \dist(x, \, D) > \delta\right\}  ,
\]
where $\delta > 0$ is small, so that $D$ and $D'$ have the same homotopy type.
Up to a diffeomorphism, we can suppose that $D'$ is a disk with $k$ holes, and $C_1$ is the exterior boundary. It is equally fair to assume that there exists a path $B$, homeomorphic to a circle, which splits $D'$ into two regions, $D_1$ and $D_2$, with
\[
 \partial D_1 = B \cup \bigcup_{i = 1}^h C_i , \qquad \partial D_2 = B \cup \bigcup_{i = h + 1}^k C_i  .
\]
This configuration is illustrated in the Figure~\ref{fig: patate}.
Let $b\colon B \to \NN$ be a loop whose free homotopy class belongs to $\prod_{i = 1}^h \gamma_i \cap \prod_{i = h+1}^k \gamma_i$. 

\begin{figure}[t] \label{fig: patate}
\begin{center}
\includegraphics[width =.65\textwidth, keepaspectratio = true]{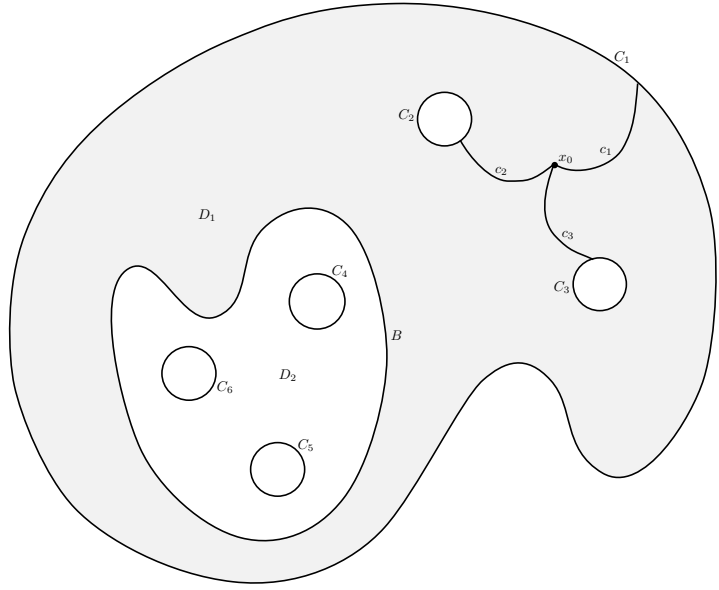}
\end{center}
\caption{The geometry of $D'$ in Lemma~\ref{lemma: extension}.}
\end{figure}

We wish, at first, to extend the boundary data to a continuous function defined on $D_1$. Let $c_1, \, \ldots, \, c_h$ be mutually non intersecting paths $[0, \, 1] \to D_1$, connecting a fixed base point $x_0\in D_1$ with $C_1, \ldots, \, C_h$ respectively, and let $\Sigma$ denote the union of $C_1, \ \ldots, \, C_h$ and the images of $c_1, \, \ldots, \, c_h$. The set $\Sigma$ can be parametrized by the loop
\[
 \alpha := \left((c_1*\alpha_1)*\widetilde{c_1}\right) * \left((c_2*\alpha_2)*\widetilde{c_2}\right) * \cdots * \left((c_h*\alpha_h)*\widetilde{c_h}\right)  ,
\]
where $\alpha_i\colon [0, \, 1]\to C_i$ is a parametrization of $C_i$ proportional to arc length.

Next, we ``push forward'' $\alpha$ to a loop in $\NN$. Since $b\in\prod_{i = 1}^h\gamma_i$, there exists a loop $\sigma$, freely homotopic to $b$, which can be written as
\[
  \sigma := \left((\sigma_1*g_1')*\widetilde{\sigma}_1\right) * \left((\sigma_2*g_2')*\widetilde{\sigma}_2\right) * \cdots * \left((\sigma_h*g_h')*\widetilde{\sigma}_h\right)  ,
\]
where $g_i'\in\gamma_i$ and $\sigma_i$ is a path in $\NN$ connecting a fixed base point $v_0\in\NN$ with $g_i'(1)$, for each $i\in\{1, \, 2, \, \ldots, \, h \}$.
We can regard $\sigma$ as a map $\Sigma \to \NN$: more precisely, we can set $t \in [0, \, 1] \mapsto \sigma(\alpha^{-1}(t))$ and check that this mapping is well-defined. By construction, there exists a homotopy between $b$ and $\sigma$, which provides a continuous extension of the boundary data $g_1', \ldots, \, g_h', \, b$ to a mapping $v_1\colon D_1\to \NN$.

We perform the same construction on the subdomain $D_2$, obtaining a continuous function $v_2$. Pasting $v_1$ and~$v_2$ we get a continuous map $v'\colon \overline D' \to \NN$, whose trace on each $C_i$ is homotopic to $g_i$. As $D\setminus D'$ is just a small neighborhood of $\partial D$, it is not difficult to extend $v'$ to a continuous function $v\colon \overline D \to \NN$, such that $\left. v\right|_{C_i} = g_i$ for all $i$. Smoothness can be recovered, for instance, via a standard approximation argument.

Conversely, assume that an extension $g$ exists, and let $B, \, D_1, \, D_2, \, \Sigma$ be as before, for $h$ arbitrary. Then, $\left.g\right|_{D_1}$ provides a free homotopy between $\left. g\right|_B$ and $\left. g\right|_\Sigma$, so the homotopy class of $\left. g\right|_B$ belongs to $\prod_{i = 1}^h\gamma_i$. Similarly, the class of $\left. g\right|_B$ belongs to $\prod_{i = h + 1}^k\gamma_i$, and hence the condition~\ref{extension} holds.
\endproof

For each $\gamma\in \Gamma(\NN)$, we define its length as
\begin{equation}
\lambda(\gamma) := \inf\left\{ \left(2\pi\int_{S^1}\abs{c'(\theta)}^2 \, \d\theta\right)^{1/2}\colon c\in \gamma\cap H^1(S^1, \, \NN)\right\}  .
\label{lambda}
\end{equation}
First, the set $\gamma\cap H^1(S^1, \, \NN)$ is not empty since the embedding $H^1(S^1, \, \NN)\hookrightarrow C^0(S^1, \, \NN)$ is compact and dense. Then, notice the infimum in 
\eqref{lambda} is achieved, and all the minimizers $c$ are geodesics. Thus, $\abs{c'}$ is constant, and $\lambda(\gamma) = 2\pi\abs{c'}$ coincides with the length of a minimizing geodesic.

In the definition of the energy cost of a defect, it is convenient take into account the product we have endowed $\Gamma(\NN)$ with. For each $\gamma\in \Gamma(\NN)$ we set
\begin{equation}
\lambda_*(\gamma) := \inf\left\{ \frac{1}{4\pi}\sum_{i = 1}^k \lambda(\gamma_i)^2 \colon k\in\N, \, \: \gamma_i\in \Gamma(\NN), \, \gamma\in\prod_{i = 1}^k\gamma_i \right\}  ,
\label{lambda *}
\end{equation}
where the order of the product is not relevant.
It is worth pointing out that the infimum in~\eqref{lambda *} is, in fact, a minimum. Indeed, since $\NN$ is compact manifold, its fundamental group is finitely generated; on the other hand, $\gamma$ contains only a finite number of elements of $\pi_1(N, \, v_0)$, by \ref{hp: N}. As a result, we see that the infimum in \eqref{lambda *} is computed over finitely many $k$-uples $(\gamma_1, \, \ldots, \gamma_k)$.

Roughly speaking, the number $\lambda_*(\gamma)$ can be regarded as the energy cost of the defect $\gamma$.
For example, when $\NN = S^1$ we have $\Gamma(S^1) \simeq \pi_1(S^1)\simeq \mathbb Z$, that is, the homotopy classes in $\Gamma(S^1)$ are completely determined by their degree $d\in\mathbb Z$.
Besides, $\lambda(d) = 2\pi\abs{d}$ and $\lambda_*(d) = \pi \abs{d}$, the infimum in \eqref{lambda *} being reached by the decomposition
\[
\gamma_1 = \gamma_2 = \cdots = \gamma_{\abs{d}} = \textrm{sign }d = \pm 1  .
\]
Hence, in this case decomposing the defect is energetically favorable. This is related to the quantization of singularities in the Ginzburg-Landau model (see \cite{BBH}). 

By definition, $\lambda_*$ enjoys the useful property
\begin{equation}
 \lambda_*(\gamma) \leq \sum_{i = 1}^k \lambda_*(\gamma_i) \qquad \textrm{if} \quad \gamma\in\prod_{i = 1}^k \gamma_i \quad \textrm{with} \quad \gamma_i\in\Gamma(\NN).
 \label{lambda * <}
\end{equation}

We conclude this subsection by coming back to our main problem \eqref{energy}, and fixing some notation that will be used throughout this work. 
\begin{defn} \label{def: trivial}
A continuous function $g\colon \partial \Omega \to \NN$ will be called \emph{homotopically trivial} if and only if it can be extended to a continuous function $\overline\Omega\to \NN$.
\end{defn}
In case $\Omega$ is a simply connected domain, thus homeomorphic to a disk, being homotopically trivial is equivalent to being null-homotopic, that is, being homotopic to a constant. By contrast, these notions do not coincide any longer for a general domain. For instance, suppose that $\Omega$ is an annulus, bounded by two circles $C_1$ and $C_2$, and that $g_1, \, g_2$ are smooth data, defined on $C_1, \, C_2$ respectively and taking values in $\NN$. If $g_1, \, g_2$ are in the same homotopy class, then the boundary datum is homotopically trivial in the sense of the previous definition, although each $g_i$, considered in itself, might not be null-homotopic. We will provide a characterization of homotopically trivial boundary data, for general domains, with the help of the tools we have described in this section.

Label the connected components of $\partial\Omega$ as $C_1, \, \ldots, \, C_k$, and denote by $\gamma_i$, for $i \in \{ 1, \, \ldots, k\}$, the free homotopy class of the boundary datum $g$ restricted to $C_i$. Define
\[
\kappa_* := \inf\left\{ \lambda_*(\gamma)\colon \gamma\in\prod_{i = 0}^k \gamma_i\right\} ,
\]
where $\lambda_*$ has been introduced in \eqref{lambda *}. By definition of $\lambda_*$, we have
\begin{equation}
\kappa_* = \inf\left\{\frac{1}{4\pi}\sum_{j = 1}^m \lambda(\eta_j)^2\colon m\in\N  , \: \eta_j \in \Gamma(\NN)  , \: \prod_{j = 1}^m \eta_j \cap \prod_{i = 1}^k\gamma_i \neq \emptyset\right\}  .
\label{kappa *}
\end{equation}
In both formulae, the infima are taken over finite sets, and hence are minima. 

As a straightforward consequence of Lemma~\ref{lemma: extension}, we obtain the following result, characterizing trivial boundary data. The proof is left to the reader.

\begin{cor}
 Let $D\subseteq\R^2$ be a smooth, bounded domain, 
 and let $g_i$, $\gamma_i$ be as in Lemma~\ref{lemma: extension}. Then, the following conditions are equivalent:
 \begin{itemize}
  \item[(i)] the boundary datum $(g_i)_{i = 1}^k$ is homotopically trivial;
  \item[(ii)] denoting by $\mathbf{\epsilon}$ the free homotopy class of any constant map in $\NN$, we have
  \[
   \mathbf{\epsilon} \in \prod_{i = 1}^k \gamma_i \, ;
  \]
  \item[(iii)] $\kappa_* = 0$.
 \end{itemize}
\end{cor}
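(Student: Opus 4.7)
The plan is to establish the cycle (i) $\Leftrightarrow$ (ii) $\Leftrightarrow$ (iii). The equivalence (ii) $\Leftrightarrow$ (iii) will come straight from the definitions of $\lambda$, $\lambda_*$ and $\kappa_*$, once one observes that $\lambda_*(\gamma)=0$ if and only if $\gamma$ is the trivial class $\mathbf{\epsilon}$. The equivalence (i) $\Leftrightarrow$ (ii) is essentially a direct application of Lemma~\ref{lemma: extension}, but with one subtlety: the lemma requires the domain to have at least two boundary components, so when $\partial D$ itself is connected it cannot be invoked directly. This is the one place that needs care; it is overcome by a puncturing trick, namely, removing a small interior closed disk $B\subset D$ so that the modified domain $D':=D\setminus B$ always has $k+1\geq 2$ boundary components, on the extra one of which the trace is forced to be null-homotopic.

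For (i) $\Leftrightarrow$ (ii) I would fix such a $B$ and consider $D'$, whose boundary consists of $C_1,\dots,C_k,\partial B$. Assuming (i), the smooth extension $\bar g\colon\overline D\to\NN$ restricts to a continuous extension on $\overline{D'}$, and its trace on $\partial B$ lies in the class $\mathbf{\epsilon}$ (it bounds the disk $B$ on which $\bar g$ is defined); applying the converse direction of Lemma~\ref{lemma: extension} with $h=k$ yields $\prod_{i=1}^k\gamma_i\cap\{\mathbf{\epsilon}\}\neq\emptyset$, which is (ii). Conversely, assuming (ii), I would prescribe constant data (of class $\mathbf{\epsilon}$) on $\partial B$ and apply the direct part of Lemma~\ref{lemma: extension} on $D'$ with $h=k$: the hypothesis of the lemma is now exactly (ii), so a smooth extension to $\overline{D'}$ exists; since the trace on $\partial B$ is constant, one extends it by that constant over $B$ to a continuous map $\overline D\to\NN$, which can be smoothed by standard approximation.

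For (ii) $\Leftrightarrow$ (iii) I would first prove the key fact that $\lambda_*(\gamma)=0$ if and only if $\gamma=\mathbf{\epsilon}$. Recall that $\lambda(\gamma)=2\pi\abs{c'}$ for a minimizing geodesic $c\in\gamma$; hence $\lambda(\gamma)=0$ iff some representative of $\gamma$ is constant, iff $\gamma=\mathbf{\epsilon}$. If $\lambda_*(\gamma)=0$, a minimizing decomposition $\gamma\in\prod_j\gamma'_j$ has $\sum_j\lambda(\gamma'_j)^2=0$, forcing each $\gamma'_j=\mathbf{\epsilon}$; since the composition of constant loops is constant, we get $\gamma\in\prod_j\{\mathbf{\epsilon}\}=\{\mathbf{\epsilon}\}$. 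Conversely $\lambda_*(\mathbf{\epsilon})\leq\lambda(\mathbf{\epsilon})^2/(4\pi)=0$, taking the trivial decomposition with one factor. Inserting this into $\kappa_*=\min\{\lambda_*(\gamma)\colon\gamma\in\prod_{i=1}^k\gamma_i\}$ (a minimum over a finite set, as noted after \eqref{kappa *}) shows that $\kappa_*=0$ iff $\mathbf{\epsilon}\in\prod_{i=1}^k\gamma_i$, closing the cycle.
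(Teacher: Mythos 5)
Your proof is correct and follows exactly the route the paper intends (it declares the corollary ``a straightforward consequence of Lemma~\ref{lemma: extension}'' and leaves the details to the reader): the equivalence (ii)$\Leftrightarrow$(iii) from the definitions of $\lambda$, $\lambda_*$, $\kappa_*$ together with attainment of the infima, and (i)$\Leftrightarrow$(ii) from the two directions of Lemma~\ref{lemma: extension}. Your puncturing trick is a clean way to make the lemma applicable uniformly, in particular when $\partial D$ is connected, and the argument that $\lambda_*(\gamma)=0$ forces $\gamma=\mathbf{\epsilon}$ is sound.
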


\subsection{The nearest point projection onto a manifold}
\label{subsect: geometric lemma}

In this subsection, we discuss briefly a geometric tool which will be exploited in our analysis: the nearest point projection on a manifold.

Let $\NN$ be a compact, smooth submanifold of $\R^d$, of dimension $n$ and codimension $k$ (that is, $d = n + k$). It is well known (see, for instance, \cite[Chapter 3, p. 57]{Moser})
that there exists a neighborhood $U$ of $\NN$ with the following property: for all $v\in U$, there exists a unique point $\pi(v)\in \NN$ such that
\begin{equation}
\abs{v - \pi(v)} = \dist(v, \, \NN) .
\label{def npp}
\end{equation}
The mapping $v\in U \mapsto \pi(v)$, called the nearest point projection onto $\NN$, is smooth, provided that $U$ is small enough. Moreover, $v - \pi(v)$ is a normal vector to $\NN$ at each point $v\in\NN$ (all this facts are proved, e.g., in \cite{Moser}).

Throughout this work, we will assume that $\pi$ is well-defined and smooth on the $\delta_0$-neigh\-bor\-hood of $\NN$, where $\delta_0$ is introduced in \ref{hp: df}.

\begin{remark} \label{remark: f}
With the help of $\pi$, we can easily derive from \ref{hp: df} some useful properties of $f$ and its derivatives. Let $v\in\R^d$ be such that $\dist(v, \, \NN) \leq \delta_0$. Then,
\[
 m_0 \dist(u, \, \NN) \leq Df(u) \cdot (u - \pi(u)) \leq M_0 \dist(u, \, \NN) .
\]
Indeed, the lower bound is given by \ref{hp: df}, whereas the upper bound is obtained by a Taylor expansion of $Df$ around the point $\pi(u)$ (remind that $Df(\pi(u)) = 0$ because $f$ is minimized on $\NN$). As $\NN$ is compact, the constant $M_0$ can be chosen independently of $v$. Via the fundamental theorem of calculus, we infer also
\[
 \frac12 m_0 \dist^2(u, \, \NN) \leq f(u) = \int_0^1 Df \left(\pi(u) + t(u - \pi(u)) \right) \cdot (u - \pi(u)) \, \d t \leq \frac12 M_0 \dist^2(u, \, \NN)  .
\]
\end{remark}

The following lemma establishes a gradient estimate for the projection of mappings.

\begin{lemma} \label{lemma: npp}
Let $u\in C^1(\Omega, \, \R^d)$ be such that $\dist(u(x), \, \NN) \leq \delta_0$ for all $x\in\Omega$, and define
\[
 \sigma(x) := \dist(u(x), \, \NN)  , \qquad v(x) := \pi(u(x)) 
\]
for all $x\in\Omega$. Then, the estimates
\begin{equation}
(1 - M \sigma) \abs{\nabla v}^2 \leq \abs{\nabla u}^2 \leq (1 + M \sigma) \abs{\nabla v}^2 + \abs{\nabla \sigma}^2
\label{npp}
\end{equation}
hold, for a constant $M$ depending only on $\NN$, $k$.
\end{lemma}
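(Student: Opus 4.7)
The plan is to derive the inequalities through the tubular neighborhood decomposition: since $\sigma(x) \leq \delta_0$ for all $x\in\Omega$, we may write $u(x) = v(x) + w(x)$ where $v(x) = \pi(u(x))\in \NN$ and $w(x) = u(x) - v(x)$ is a vector normal to $\NN$ at $v(x)$ with $\abs{w(x)} = \sigma(x)$. Differentiating gives $\partial_i u = \partial_i v + \partial_i w$; the vector $\partial_i v$ lies in $T_{v(x)}\NN$, whereas $\partial_i w$ will split into a tangent and a normal component, and the whole content of the lemma is a sharp pointwise estimate of those two components.

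To compute them I would fix an arbitrary base point $x_0\in\Omega$ and pick a smooth orthonormal frame $(\nu_1, \ldots, \nu_k)$ of the normal bundle of $\NN$ near $v(x_0)$, arranged by a standard parallel-transport construction so that the normal connection of $\NN$ vanishes at $v(x_0)$. Writing $w(x) = \sum_\beta a_\beta(x)\, \nu_\beta(v(x))$, this choice guarantees that at $x_0$ the derivative $D\nu_\beta\cdot X$ along any tangent vector $X$ reduces to its tangent component $-S_{\nu_\beta}(X)$, with $S_{\nu_\beta}$ the shape operator of $\NN$ at $v(x_0)$. A short computation then yields at $x_0$
\[
\partial_i u = (I - A)\,\partial_i v + \sum_\beta (\partial_i a_\beta)\, \nu_\beta ,
\]
where $A := \sum_\beta a_\beta(x_0)\, S_{\nu_\beta}$ is an endomorphism of $T_{v(x_0)}\NN$ satisfying $\norm{A} \leq C\sigma(x_0)$, the constant $C$ being controlled by a uniform bound on the second fundamental form of $\NN$, and hence depending only on $\NN$ and on $k$.

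The two summands are orthogonal, so squaring and summing in $i$ gives $\abs{\nabla u}^2 = \sum_i \abs{(I-A)\,\partial_i v}^2 + \sum_\beta \abs{\nabla a_\beta}^2$ at $x_0$. The bound $\norm{A}\leq C\sigma$ immediately produces $(1-M\sigma)\abs{\nabla v}^2 \leq \sum_i\abs{(I-A)\,\partial_i v}^2 \leq (1+M\sigma)\abs{\nabla v}^2$ for a suitable $M$; dropping the nonnegative normal contribution yields the lower inequality of \eqref{npp}, while the upper inequality follows after comparing $\sum_\beta \abs{\nabla a_\beta}^2$ with $\abs{\nabla\sigma}^2$ via the pointwise identity $\sigma^2 = \sum_\beta a_\beta^2$ and Cauchy--Schwarz on the fibre. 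The main technical obstacle is conceptual rather than computational: the frame is only parallel at the chosen base point, so the splitting above is meaningful only pointwise at $x_0$, but since $x_0\in\Omega$ is arbitrary the resulting scalar inequalities are global. A further minor point is the regularity of $\sigma$ on its zero set $\{\sigma = 0\}$, which I would handle by first establishing \eqref{npp} on the open set $\{\sigma>0\}$ and then passing to the closure by continuity, using that $u\in C^1$ and that both sides of \eqref{npp} are continuous in the relevant sense.
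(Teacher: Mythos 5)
Your decomposition and the treatment of the tangential part are essentially the paper's own argument in different clothing: the paper also writes $u = v + \sum_i \alpha_i \nu_i(v)$ in a local orthonormal normal frame, expands $\abs{\nabla u}^2 - \abs{\nabla v}^2$, and absorbs the curvature and cross terms into $M\sigma\abs{\nabla v}^2$; your parallel-frame/shape-operator formulation is a cleaner way of organizing the same computation, and your proof of the lower inequality in \eqref{npp} is correct.

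The genuine gap is the final step of the upper bound. After the orthogonal splitting you must control $\sum_\beta \abs{\nabla a_\beta}^2$ by $\abs{\nabla\sigma}^2$ (up to terms absorbable into $M\sigma\abs{\nabla v}^2$), and you propose to do this from $\sigma^2 = \sum_\beta a_\beta^2$ ``by Cauchy--Schwarz on the fibre.'' But differentiating that identity gives $\sigma\,\partial_j\sigma = \sum_\beta a_\beta\,\partial_j a_\beta$, and Cauchy--Schwarz then yields $\abs{\nabla\sigma}^2 \leq \sum_\beta\abs{\nabla a_\beta}^2$ --- the reverse of what you need. Moreover, no pointwise reverse inequality is available when the codimension $k \geq 2$: the normal component $u - v$ can rotate inside the normal fibre at constant modulus, in which case $\nabla\sigma = 0$ while $\sum_\beta\abs{\nabla a_\beta}^2 > 0$, and since such a rotation can occur with $v$ constant it cannot be hidden in the $M\sigma\abs{\nabla v}^2$ term either. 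The paper closes this step by a different device: it writes $\alpha_i = (u - v)\cdot\nu_i(v)$, differentiates this identity to obtain $\sum_i\abs{\nabla\alpha_i}^2 \leq M\{\abs{\nabla(u-v)}^2 + \sigma^2\abs{\nabla v}^2\}$, and then identifies $\abs{\nabla(u-v)}$ with $\abs{\nabla\sigma}$ by the chain rule, i.e.\ it relates the \emph{full} derivative of the normal displacement, not just that of its modulus, to $\nabla\sigma$. Your argument needs this identification (or some substitute controlling the rotation of $u-v$ within the fibre); the fibre Cauchy--Schwarz cannot supply it, so as written the upper inequality in \eqref{npp} is not established except in the codimension-one case, where $\sigma = \abs{a_1}$ and $\abs{\nabla\sigma} = \abs{\nabla a_1}$ almost everywhere. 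The remark about handling the set $\{\sigma = 0\}$ by continuity is a minor point and unproblematic.
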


\proof
Fix a point $x\in\Omega$. Let $\nu_1, \, \nu_2, \, \ldots, \nu_k$ be a moving orthonormal frame for the normal space to $\NN$, defined on a neighborhood of $v(x)$. (Even if $\NN$ is not 
orientable, such a frame is locally well-defined). Then, for all $y$ in a neighborhood of $x$, there exist some numbers $\alpha_1(y), \, \alpha_2(y), \ldots, \, \alpha_k(y)$ such that
\begin{equation}
u(y) = v(y) + \sum_{i = 1}^k \alpha_i(y) \nu_i(v(y))  .
\label{NPP 1}
\end{equation}
The functions $v$, $\alpha_i$ are as regular as $u$. Differentiating the equation \eqref{NPP 1}, and raising to the square each side of the equality, we obtain
\begin{equation} \label{npp 2}
\begin{aligned}
\abs{\nabla u}^2 - \abs{\nabla v}^2  &= \sum_{i = 1}^k \left\{\alpha_i^2\abs{\nabla\nu_i(v)}^2 + \abs{\nabla\alpha_i}^2 \right. \\
\Big. &+2\alpha_i\nabla v : \nabla\nu_i(v) + 2\nabla v : \left(\nu_i(v)\otimes \nabla\alpha_i \right) + 2\alpha_i \nabla\nu_i(v) : \left(\nu_i(v)\otimes \nabla\alpha_i \right) \Big\}  .
\end{aligned}
\end{equation}
The fourth term in the right-hand side vanishes, because $\nabla v$ is tangent to $\NN$. The last term vanishes as well since, differentiating $\nu_i = 1$, we have 
$(\nabla\nu_i)\nu_i = 0$. For the first term of the right-hand side, we set
\[
M := 1 + \sup_{1 \leq i \leq k} \norm{\nabla \nu_i}_{L^\infty}^2 
\]
and we remark that
\[
 \sum_{i = 1}^k \alpha_i^2\abs{\nabla\nu_i(v)}^2 \leq M \sum_{i = 1}^k \alpha_i^2\abs{\nabla v}^2 = M \sigma^2 \abs{\nabla v}^2  .
\]
By the Cauchy-Schwarz inequality and $\sum_{i = 1}^k \alpha_i \leq C_k \left(\sum_{i = 1}^k\alpha_i^2\right)^{1/2}$, we can write
\begin{equation}
\abs{ \sum_{i = 1}^k \alpha_i\nabla v : \nabla\nu_i(v)} \leq M \sum_{i = 1}^k\alpha_i \abs{\nabla v}^2 \leq M \sigma \abs{\nabla v}^2  ,
\label{npp 4}
\end{equation}
up to modifying the value of $M$ in order to absorb the factor $C_k$. Furthermore, since $\sigma\leq \delta_0 < 1$, from \eqref{npp 2} and \eqref{npp 4} we infer
\begin{equation}
(1 - M \sigma) \abs{\nabla v}^2 + \sum_{i = 1}^k\abs{\nabla \alpha_i}^2 \leq \abs{\nabla u}^2 \leq (1 + M \sigma) \abs{\nabla v}^2 + \sum_{i = 1}^k\abs{\nabla \alpha_i}^2  .
\label{npp 3}
\end{equation}

The lower bound in \eqref{npp} follows immediately, and we only need to estimate the derivatives of $\alpha_i$ to conclude. It follows from \eqref{NPP 1} that 
$\alpha_i = (u - v)\cdot\nu_i(v)$.
Differentiating and raising to the square this identity, and taking into account that $(\nabla\nu_i)\nu_i = 0$, we deduce
\[
\sum_{i = 1}^k \abs{\nabla\alpha_i}^2 = \sum_{i = 1}^k \left\{\abs{\nabla(u - v) \cdot \nu_i(v)}^2 + \abs{(u - v) \cdot \nabla\nu_i(v)}^2 \right\}  .
\]
Then
\begin{equation} \label{npp 6}
\sum_{i = 1}^k \abs{\nabla\alpha_i}^2 \leq M \left\{ \abs{\nabla(u - v)}^2 + \sigma^2 \abs{\nabla v}^2 \right\} .
\end{equation}
Computing the gradient of $\sigma = \abs{u - v}$ by the chain rule yields $\abs{\nabla \sigma} =  \abs{\nabla (u - v)}$. Therefore, the estimates \eqref{npp 4} and \eqref{npp 6} imply 
the upper bound in \eqref{npp}.

Notice that our choice of the constant $M$ depends on the neighborhood where the frame $(\nu_i)_{1\leq i \leq k}$ is defined. 
However, since $\NN$ is compact, we can find a constant for which the inequality \eqref{npp} holds globally.
\endproof

 \section{Biaxiality phenomena in the Landau-de Gennes model}
 \label{sect: biaxial}

 We focus here on the Landau-de Gennes model \eqref{energy}--\eqref{LG f}. To stress that this discussion pertains to a specific case, throughout the section we use $Q$ instead of $u$ to denote the unknown. In constrast, the other notations --- the symbols for the potential and the vacuum manifold, in particular --- are still valid.
 
 In this section, we aim to prove Theorem~\ref{thm: intro-biaxial}. This can be achieved independently of the asymptotic analysis: we need only to recall a well-known property of minimizers.
 
 \begin{property}
  Any minimizer $\Qe$ for problem \eqref{energy}--\eqref{LG f} is of class $C^\infty$ and fulfills
  \[
  \norm{\Qe}_{L^\infty(\Omega)} \leq \sqrt{\frac23} s_*  ,
  \]
  where $s_*$ is the constant defined in \eqref{s+}.
 \end{property}

 The proof will be given further on (see Lemma~\ref{lemma: L infty}).
 
 \subsection{Useful properties of \emph{Q}-tensors}
\label{standard LG}

Our first goal is to show that the Landau-de Gennes model satisfies \ref{hp: f}--\ref{hp: N}, so that it fits to our general setting. In doing so, we recall some classical, useful facts about $Q$-tensors.
Let us start by the following well-known definition: we set
\[
\beta(Q) := 1 - 6\frac{\left(\tr Q^3\right)^2}{\left(\tr Q^2\right)^3} \qquad \textrm{for } Q \in \Sz\setminus \{0\}  .
\]
This defines a smooth, homogeneous function $\beta$, which will be termed the biaxiality parameter. It could be proved that $0\leq \beta \leq 1$ (see, for instance, \cite[Lemma 1 and Appendix]{MZ} and the references therein). Now, we can precise what we mean by ``maximally biaxial minimizers'', an expression we have defined informally in the Introduction.

\begin{defn} \label{def: biaxial} Let $Q$ be a function in $H^1_g(\Omega, \, \Sz)$. We say that $Q$ is \emph{almost uniaxial} if and only if
\[
  \max_{\overline\Omega} \beta(Q) < 1  .
\]
Otherwise, we say that $Q$ is maximally biaxial.
\end{defn}

Another classical fact about $Q$-tensors is the following representation formula, which turns out to be useful in several occasions. 

\begin{lemma} \label{lemma: representation}
 For all fixed $Q\in\Sz\setminus \{0\}$, there exist two numbers $s \in (0, \, +\infty)$, $r\in [0, \, 1]$ and an orthonormal pair of vectors $(n, \, m)$ in $\R^3$ such that
 \begin{equation} \label{representation}
  Q = s \left\{ n^{\otimes 2} - \frac 13 \Id + r \left( m^{\otimes 2} - \frac13 \Id \right) \right\} .
 \end{equation}
 Furthermore, labeling the eigenvalues of $Q$ as $\lambda_1 \geq \lambda_2 \geq \lambda_3$, if $(s, \, r, \, n, \, m)$ satisfies the conditions above then
 \begin{equation} \label{representation 2}
 s = 2\lambda_1 + \lambda_2  , \qquad r = \frac{\lambda_1 + 2\lambda_2}{2\lambda_1 + \lambda_2} , 
 \end{equation}
and $n, \, m$ are eigenvectors associated to $\lambda_1, \, \lambda_2$ respectively. 
\end{lemma}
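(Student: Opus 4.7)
The plan is to use the spectral theorem as the main tool and then match coefficients. Since $Q \in \Sz$ is symmetric, it admits an orthonormal basis $(n, m, p)$ of eigenvectors with eigenvalues $\lambda_1 \geq \lambda_2 \geq \lambda_3$, and the tracelessness condition forces $\lambda_1 + \lambda_2 + \lambda_3 = 0$, which in particular implies $\lambda_1 \geq 0 \geq \lambda_3$ with $\lambda_1 > \lambda_3$ (since $Q \neq 0$).

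First I would write
\[
 Q = \lambda_1 n^{\otimes 2} + \lambda_2 m^{\otimes 2} + \lambda_3 p^{\otimes 2},
\]
then eliminate $p^{\otimes 2}$ using the completeness relation $n^{\otimes 2} + m^{\otimes 2} + p^{\otimes 2} = \Id$ to obtain
\[
 Q = (\lambda_1 - \lambda_3) n^{\otimes 2} + (\lambda_2 - \lambda_3) m^{\otimes 2} + \lambda_3 \Id.
\]
Comparing with the desired form $s (n^{\otimes 2} - \frac13 \Id) + s r (m^{\otimes 2} - \frac13 \Id) = s n^{\otimes 2} + sr\, m^{\otimes 2} - \frac{s(1+r)}{3} \Id$ and using $\lambda_3 = -\lambda_1 - \lambda_2$ gives the candidates
\[
 s = \lambda_1 - \lambda_3 = 2\lambda_1 + \lambda_2, \qquad sr = \lambda_2 - \lambda_3 = \lambda_1 + 2\lambda_2,
\]
and the third coefficient $-\frac{s(1+r)}{3} = -(\lambda_1 + \lambda_2) = \lambda_3$ is then automatic. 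Next, I would verify the bounds: $s > 0$ since $\lambda_1 > \lambda_3$; $sr \geq 0$ since $\lambda_2 \geq \lambda_3$; and $r \leq 1$ is equivalent to $\lambda_1 + 2\lambda_2 \leq 2\lambda_1 + \lambda_2$, i.e.\ $\lambda_2 \leq \lambda_1$. This handles existence.

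For the converse direction, I would take a decomposition \eqref{representation} satisfying the stated conditions, complete $(n, m)$ to an orthonormal basis by $p := n \times m$, and compute the action of $Q$ on $n$, $m$, $p$. Using $n^{\otimes 2} n = n$, $m^{\otimes 2} n = 0$, etc., a direct calculation gives
\[
 Q n = \frac{s(2-r)}{3} n, \qquad Q m = \frac{s(2r-1)}{3} m, \qquad Q p = -\frac{s(1+r)}{3} p.
\]
So $(n, m, p)$ diagonalizes $Q$ with eigenvalues $\mu_1, \mu_2, \mu_3$ given by these coefficients. Using $s > 0$ and $r \in [0, 1]$, I would check the inequalities
\[
 \mu_1 - \mu_2 = s(1 - r) \geq 0, \qquad \mu_2 - \mu_3 = s r \geq 0,
\]
so $\mu_1 \geq \mu_2 \geq \mu_3$, forcing $\mu_i = \lambda_i$. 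Inverting the linear system $\lambda_1 = s(2-r)/3$, $\lambda_2 = s(2r-1)/3$ yields back $s = 2\lambda_1 + \lambda_2$ and $r = (\lambda_1 + 2\lambda_2)/(2\lambda_1 + \lambda_2)$, and identifies $n, m$ as eigenvectors associated to $\lambda_1, \lambda_2$.

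There is no genuine obstacle here; the result is essentially a rewriting of the spectral decomposition. The only minor subtlety is the case of repeated eigenvalues, where the orthonormal pair $(n, m)$ is not uniquely determined; but this does not affect the statement, which only asserts existence of \emph{some} such pair.
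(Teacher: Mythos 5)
Your proof is correct and follows essentially the same route as the paper: the spectral decomposition together with the identity $\Id = n^{\otimes 2} + m^{\otimes 2} + p^{\otimes 2}$, matching coefficients and checking that the resulting coefficients are ordered, so that they must be the sorted eigenvalues. (Incidentally, your middle coefficient $\tfrac{s(2r-1)}{3}$ is the correct one --- the paper's sketch displays $\tfrac s3(r-1)$, a typo, as one confirms by checking it against \eqref{representation 2}.)
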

\begin{proof}[Sketch of the proof]
Let $(s, \, r, \, n, \, m)$ be a set of parameters with the desired properties, and denote by $p$ the vector product of $n$ and $m$, so that $(n, \, m, \, p)$ is a positive orthonormal basis of $\R^3$. Exploiting the identity $\Id = n^{\otimes 2} + m^{\otimes 2} + p^{\otimes 2}$, we can rewrite \eqref{representation} as
\[
 Q = \frac s3 (2 - r) n^{\otimes 2} + \frac s3 (r - 1) m^{\otimes 2} - \frac s3 (1 + r) p^{\otimes 2} .
\]
The constraints $s \geq0$, $0 \leq r \leq 1$ entail
\[
 \frac s3 (2 - r) \geq \frac s3 (r - 1) \geq - \frac s3 (1 + r) .
\]
We conclude that
\[
 \lambda_1 = \frac s3 (2 - r) , \qquad \lambda_2 = \frac s3 (r - 1) , \qquad \lambda_3 = -\frac s3 (1 + r) ,
\]
and that $n, \, m, \, p$ are eigenvectors associated to $\lambda_1, \, \lambda_2, \, \lambda_3$ respectively. The identities \eqref{representation 2} follow by straightforward computations. Conversely, it is easily checked that the parameters defined by \eqref{representation 2} satisfy \eqref{representation}.
\end{proof}

\begin{remark} \label{remark: beta lambda}
The limiting cases $r = 0$ and $r = 1$ correspond, respectively, to $\lambda_1 = \lambda_2$ and $\lambda_2 = \lambda_3$. In the literature, these cases are sometimes referred to as prolate and oblate uniaxiality, respectively. The modulus and the biaxiality parameter of $Q$ can be expressed in terms of $s, \, r$ as follows (compare, for instance, \cite[Equation (187)]{MZ}):
 \[
  \abs{Q}^2 = \frac23 s^2 \left(r^2 - r + 1 \right) , \qquad \beta(Q) = \frac{27r^2\left(1 - r\right)^2}{4\left(r^2 - r + 1 \right)^3} .
 \]
 In particular, one has
 \begin{equation} \label{s abs}
  s(Q) \geq \sqrt{\frac 32} \abs{Q} .
 \end{equation}
 Also, remark that $Q$ is maximally biaxial if and only if there exists a point $x\in\Omega$ such that $r(Q(x)) = 1/2$.
\end{remark}

With the help of \eqref{representation}, the set of minimizers of the potential $f$ can be described as follows.

\begin{prop} \label{prop: lower estimate f}
Let $f$ be given by \eqref{LG f}, and set
\begin{equation} \label{s+}
 s_* :=\frac{1}{4c}\left\{b + \sqrt{b^2 + 24ac}\right\} .
\end{equation}
Then, the minimizers for $f$ are exactly the matrices which can be expressed as
\[
Q = s_* \left(n^{\otimes 2} - \frac 13 \Id\right) \qquad \textrm{for some unit vector } n\in \R^3 .
\]
The set of minimizers is a smooth submanifold of $\Sz$, homeomorphic to the projective plane~$\PR$, contained in the sphere $\left\{Q\in\Sz\colon \abs{Q} = s_*\sqrt{2/3} \right\}$. In addition, $\beta(Q) = 0$ if $Q$ is a minimizer for $f$.
\end{prop}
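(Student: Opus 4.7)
The potential $f$ is invariant under conjugation $Q\mapsto RQR^T$ for $R\in O(3)$ (as all trace-terms are), so $f(Q)$ depends only on the unordered spectrum of $Q$. Writing $\lambda_1, \, \lambda_2, \, \lambda_3$ for the eigenvalues, the tracelessness constraint $\lambda_1+\lambda_2+\lambda_3=0$ yields $\tr Q^3 = 3\lambda_1\lambda_2\lambda_3$, so, setting $t := \tr Q^2$, the functional becomes
\[
  f(Q) = k - \frac{a}{2}t - b\,\lambda_1\lambda_2\lambda_3 + \frac{c}{4}t^2.
\]
My plan is to first fix $t$ and optimize over admissible eigenvalue configurations, and then minimize the resulting one-variable function of $t$.

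\textbf{Step 1 (eigenvalue configuration).} From $\sum\lambda_i = 0$ and $\sum_{i<j}\lambda_i\lambda_j = -t/2$, the characteristic polynomial of $Q$ is $X^3 - \tfrac{t}{2}X - \lambda_1\lambda_2\lambda_3$. Since its roots are real, its discriminant $\tfrac{t^3}{2} - 27(\lambda_1\lambda_2\lambda_3)^2$ is non-negative, giving $\lambda_1\lambda_2\lambda_3 \leq t^{3/2}/(3\sqrt 6)$, with equality if and only if two eigenvalues coincide. Since $b > 0$, minimizing $f$ at fixed $t$ means maximizing $\lambda_1\lambda_2\lambda_3$, hence saturating the bound with $\lambda_1\lambda_2\lambda_3 > 0$: two eigenvalues are forced to be equal and negative, the third positive. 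By Lemma~\ref{lemma: representation}, such spectra correspond precisely to $r = 0$, so any minimizer must have the form $Q = s(n^{\otimes 2} - \tfrac{1}{3}\Id)$ with $s > 0$ and $n \in S^2$.

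\textbf{Step 2 (optimal modulus).} For $Q$ of this form (eigenvalues $2s/3, \, -s/3, \, -s/3$) one computes $\tr Q^2 = 2s^2/3$ and $\tr Q^3 = 2s^3/9$, whence
\[
  f(s) = k - \frac{as^2}{3} - \frac{2bs^3}{27} + \frac{cs^4}{9}, \qquad f'(s) = \frac{2s}{9}\left( 2cs^2 - bs - 3a \right).
\]
The unique positive root of $2cs^2 - bs - 3a$ is exactly $s_*$ as in \eqref{s+}, and since $f'$ is negative on $(0, s_*)$ and positive on $(s_*, +\infty)$, this is the global minimum on $[0, +\infty)$ (and $f(s_*)<k=f(0)$ rules out $Q=0$). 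Combined with Step~1, the minimum set of $f$ is exactly $\{s_*(n^{\otimes 2} - \tfrac{1}{3}\Id) : n \in S^2\}$.

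\textbf{Step 3 (topology and remaining assertions).} The map $\iota\colon n\in S^2 \mapsto s_*(n^{\otimes 2} - \tfrac{1}{3}\Id) \in \Sz$ satisfies $\iota(n) = \iota(m)$ iff $n = \pm m$, so it factors through $\PR = S^2/\{\pm 1\}$ as a continuous injection. The differential $h\mapsto n\otimes h + h\otimes n$ of $n \mapsto n^{\otimes 2}$ is injective on $T_nS^2$ (applying it to $n$ returns $h$), hence $\iota$ descends to a smooth injective immersion of the compact manifold $\PR$, which is automatically an embedding; this exhibits the minimum set as a submanifold of $\Sz$ diffeomorphic to $\PR$. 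The identity $\abs{Q}^2 = 2s_*^2/3$ places it on the stated sphere, and $r = 0$ gives $\beta(Q) = 0$ via Remark~\ref{remark: beta lambda}. The only real conceptual step is Step~1: a direct attack through the $(s, r)$ parametrization leads to an unpleasant scalar calculus on $[0,1]$ whose critical points are not transparent, whereas the discriminant argument above exploits the algebra of traceless symmetric matrices cleanly.
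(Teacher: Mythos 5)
Your proof is correct, and it is genuinely self-contained, which the paper's treatment is not: the paper proves nothing here and simply refers to \cite[Propositions 9 and 15]{MZ}. The cited argument of Majumdar--Zarnescu works on the eigenvalue parametrization and identifies the stationary points of the bulk potential (a constrained critical-point/Lagrange-multiplier computation), from which uniaxiality of minimizers is deduced before the scalar minimization in $s$; your route replaces that step by the sharp spectral inequality $\abs{\lambda_1\lambda_2\lambda_3} \leq (\tr Q^2)^{3/2}/(3\sqrt 6)$, obtained from the non-negativity of the discriminant of the depressed characteristic polynomial, with equality exactly at a repeated eigenvalue. Since $b>0$, maximizing the cubic invariant on each slice $\{\tr Q^2 = t\}$ (a compact set, so the slice-wise minimum is attained) forces the prolate uniaxial form, and the problem collapses to the same one-variable function whose unique positive critical point is $s_*$; your sign analysis of $\varphi'(s)=\frac{2s}{9}(2cs^2-bs-3a)$ plus $\varphi(s_*)<\varphi(0)=k$ correctly rules out $Q=0$ and gives the ``exactly'' in the statement, because $f$ depends only on the spectrum and $s_*$ is the strict minimizer of $\varphi$ on $[0,+\infty)$. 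The discriminant inequality is the same inequality $\sqrt6\,\tr Q^3\le\abs{Q}^3$ that the paper invokes later in Lemma~\ref{lemma: conti}, so your argument fits naturally into the paper's toolkit and is arguably more transparent than the criticality computation in \cite{MZ}. Step 3 (the quotient of $n\mapsto s_*(n^{\otimes 2}-\frac13\Id)$ through $S^2\to\PR$ being an injective immersion of a compact manifold, hence an embedding) matches the map $\phi$ in \eqref{phi}--\eqref{d phi} that the paper uses, and the computations $\abs{Q}=s_*\sqrt{2/3}$, $\beta(Q)=0$ are immediate. Two cosmetic remarks only: you could state explicitly that feasibility of the equality case on each slice is what makes the bound a maximum (you implicitly use it), and the appeal to Remark~\ref{remark: beta lambda} could be replaced by the one-line direct computation of $\beta$, since that remark appears after the proposition in the paper (there is no actual circularity, as the remark is an independent algebraic identity).
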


The reader is referred to \cite[Propositions 9 and 15]{MZ} for the proof. Here, we mention only that a diffeomorphism $\PR \to \NN$ can be constructed by considering the map
\begin{equation} \label{phi}
 \phi\colon n\in S^2 \mapsto s_* \left( n^{\otimes 2} - \frac13 \Id \right)
\end{equation}
and quotienting it out by the universal covering $S^2 \to \PR$, which is possible because $\phi(n) = \phi(-n)$. Remark also that, for all $n\in S^2$ and all tangent vector $v\in T_n S^2$, we have
\begin{equation} \label{d phi}
 \left\langle d\phi(n), \, v\right\rangle = s_* \left( n \otimes v + v \otimes n \right)  ,
\end{equation}
as it is readily seen differentiating the function $t \mapsto \phi(n + tv)$.

It is well-known that the fundamental group of the real projective plane consists of two elements only. Therefore, $\Gamma(\PR) \simeq \pi_1(\PR) \simeq \mathbb Z/2\mathbb Z$, and \ref{hp: N} is trivially satisfied.
Hypothesis \ref{hp: f} is fulfilled as well, up to rescaling the norm in the parameter space, so that the vacuum manifold is contained in the unit sphere. Let us perform such a scaling: we associate to each map $Q\in H^1(\Omega, \, \Sz)$ a rescaled function $Q_*$, by
 \[
   Q(x) = \sqrt{\frac23} s_* Q_*(x) \qquad \textrm{for all } x\in\Omega .
 \]
 It is easily computed that
 \[
  E_\varepsilon(Q) = \frac{2s_*^2}{3} \int_\Omega \left\{ \frac12 \abs{\nabla Q_*}^2 + \frac{1}{\varepsilon^2} f^*(Q_*)\right\} ,
 \]
 where $f^*$ is given by
 \begin{equation} \label{rescaled f}
  f^*(Q_*) := \frac{3}{2s_*^2} f\left(\sqrt{\frac23}s_* Q_*\right) = -\frac{a^*}{2} \tr Q_*^2 - \frac{b^*}{3} \tr Q_*^3 + \frac{c^*}{4} \left(\tr Q_*^2 \right)^2 
 \end{equation}
 and
 \[
  a^* := a , \qquad b^* := \sqrt{\frac23}s_* b , \qquad c^* := \frac23 s_*^2 c .
 \]
 Notice that $f^*$ is minimized by $s = \sqrt{3/2}, \, r = 0$. Thus, we can assume that the vacuum manifold is contained in the unit sphere of $\Sz$, up to substituting $f^*$ for $f$ in the 
 minimization problem \eqref{energy}.

\begin{lemma} \label{lemma: conti}
 The potential $f^*$ defined by \eqref{rescaled f} fulfills \ref{hp: f}--\ref{hp: growth}.
\end{lemma}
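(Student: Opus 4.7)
My plan is to verify \ref{hp: f}, \ref{hp: df}, \ref{hp: growth} in order. For \ref{hp: f}, non-negativity of $f^*$ follows from the additive constant $k$ from \eqref{LG f} (implicit in \eqref{rescaled f}), chosen so that $\inf f = 0$; the rescaling preserves this, so $\inf f^* = 0$ and $f^* \geq 0$. The zero set $\NN$ of $f^*$ corresponds, via the substitution $Q = \sqrt{2/3}\,s_* Q_*$, to the vacuum manifold of $f$, which by Proposition~\ref{prop: lower estimate f} is the image of $\phi$ from \eqref{phi} modulo the antipodal identification $n \sim -n$. This gives a smooth, compact, connected submanifold of $\Sz$ without boundary, diffeomorphic to $\PR$; the rescaling factor $\sqrt{2/3}\,s_*$ is precisely the one that places it on the unit sphere of $\Sz$, so $\NN$ is contained in the closed unit ball.

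For \ref{hp: df}, I would invoke Remark~\ref{remark: hp} and reduce the claim to showing that $D^2 f^*(v)$ is positive definite on the normal space $(T_v \NN)^\perp$ at every $v \in \NN$; a uniform lower bound then follows from the compactness of $\NN$, and a Taylor expansion of $Df^*(v+t\nu)$ around $v$ (where $Df^*(v)=0$) yields \ref{hp: df} with $\delta_0$ small enough to absorb the quadratic remainder. Since $f^*$ is invariant under the conjugation action $Q \mapsto R Q R^T$ by $SO(3)$, which is transitive on $\NN$, it suffices to check positivity at one convenient representative, say $Q_0 := \sqrt{3/2}\,(e_3^{\otimes 2} - \Id/3)$. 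From \eqref{d phi}, $T_{Q_0}\NN$ is spanned by $e_3 \otimes e_j + e_j \otimes e_3$ for $j = 1, 2$; its orthogonal complement in $\Sz$ is three-dimensional, spanned by $Q_0$ itself (the radial direction to the unit sphere), the traceless diagonal matrix $e_1^{\otimes 2} - e_2^{\otimes 2}$, and the off-diagonal matrix $e_1 \otimes e_2 + e_2 \otimes e_1$. On each of these three normal directions I would evaluate $D^2 f^*(Q_0)[\nu,\nu]$ via the explicit polynomial form of $f^*$ and read off strict positivity. This concrete computation is the main obstacle, since the other two hypotheses admit soft arguments, while here no symmetry trick seems to bypass the explicit check.

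For \ref{hp: growth}, I would apply the sufficient condition stated just after the list of assumptions, namely $v \cdot Df^*(v) > 0$ for $\abs{v} > 1$. A short computation of the intrinsic gradient on $\Sz$ gives
\[
 Q \cdot Df^*(Q) = -a^* \abs{Q}^2 - b^* \tr Q^3 + c^* \abs{Q}^4,
\]
and the bound $\abs{\tr Q^3} \leq \abs{Q}^3/\sqrt{6}$, equivalent to $\beta(Q) \geq 0$ (recalled at the beginning of Subsection~\ref{standard LG}), yields
\[
 Q \cdot Df^*(Q) \geq \abs{Q}^2\, p(\abs{Q}), \qquad p(t) := c^* t^2 - \frac{b^*}{\sqrt{6}}\, t - a^* .
\]
The key observation is that the definition \eqref{s+} of $s_*$ is exactly equivalent to $p(1) = 0$: substituting $a^* = a$, $b^* = \sqrt{2/3}\,s_* b$, $c^* = (2/3)s_*^2 c$, the identity $p(1) = 0$ reduces to $2 c s_*^2 - b s_* - 3a = 0$, which is \eqref{s+}. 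Since $p$ is an upward-opening parabola with $p(0) = -a^* < 0$ (using $a > 0$ in the low-temperature regime), its other root is negative, so $p(t) > 0$ for every $t > 1$, which gives \ref{hp: growth}.
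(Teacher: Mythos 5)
Your verifications of \ref{hp: f} and \ref{hp: growth} are correct and follow the same route as the paper: \ref{hp: f} via Proposition~\ref{prop: lower estimate f} and the rescaling, and \ref{hp: growth} via the sufficient condition $Q:Df^*(Q)>0$ for $\abs{Q}>1$ together with $\abs{\tr Q^3}\le \abs{Q}^3/\sqrt 6$; your observation that $p(1)=0$, i.e.\ that $s_*$ as defined in \eqref{s+} is exactly the positive root of $2ct^2-bt-3a$, is precisely the algebra behind the step the paper dismisses as ``readily seen''. The genuine gap is in \ref{hp: df}. There you reproduce the paper's set-up (reduction by rotation invariance to the representative $Q_0=\phi(e_3)$, the tangent space from \eqref{d phi}, the three-dimensional normal space, which indeed coincides with the matrices of the form \eqref{forma p}), but you stop exactly where the proof begins: the strict positivity of the normal Hessian is announced (``I would evaluate \dots and read off strict positivity'') rather than carried out. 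This positivity is not a formality: it rests on the sign conditions $-a^*+c^*>0$ and $3c^*-\sqrt{3/2}\,b^*\ge 0$, which have to be checked against the explicit value \eqref{s+} of $s_*$; this computation is the entire substance of the paper's proof of \ref{hp: df}, where one obtains $Df^*(Q_0+tP):P\ge t(-a^*+c^*)\abs{P}^2+t\bigl(3c^*-\sqrt{3/2}\,b^*\bigr)(p_1+p_3)^2+O(t^2)$ for every normal $P$ as in \eqref{forma p}. As written, \ref{hp: df} is asserted, not established.

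Moreover, even as a plan your check is incomplete: positivity of $D^2f^*(Q_0)[\nu,\nu]$ on the three basis vectors $Q_0$, $e_1^{\otimes 2}-e_2^{\otimes 2}$, $e_1\otimes e_2+e_2\otimes e_1$ does not by itself give positive definiteness on the normal space unless the mixed terms are controlled. They do vanish here --- one computes $D^2f^*(Q_0)[P,P]=(-a^*+c^*)\abs{P}^2-2b^*\tr(Q_0P^2)+2c^*(Q_0:P)^2$, and the associated bilinear form is diagonal in your basis, since $Q_0$ is diagonal and $DE+ED=0$ for $D=e_1^{\otimes2}-e_2^{\otimes2}$, $E=e_1\otimes e_2+e_2\otimes e_1$ --- but this must be verified (or one works directly with a general normal vector \eqref{forma p}, as the paper does). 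To complete your argument you therefore need both the cross-term verification and the explicit evaluation of the diagonal entries, together with the check of their signs using \eqref{s+}.
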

\proof
We know by Proposition~\ref{prop: lower estimate f} that \ref{hp: f} is satisfied. We compute the gradient of $f^*$:
\[
 Df^*(Q) = -a^* Q - b^* Q^2 - \frac13 b^* (\tr Q^2) \Id + c^* Q \tr Q^2 ,
\]
where the term $-b^* (\tr Q^2)\Id/3$ is a Lagrange multiplier, accounting for the tracelessness constant in the definition of $\Sz$. With the help of Remark~\ref{remark: hp}, we show that \ref{hp: growth} is fulfilled as well. Indeed, one cas use the inequality $\sqrt6 \tr Q^3 \leq \abs{Q}^3$ to derive
\[
 Df^*(Q) : Q  =  -a^* \abs{Q}^2 - b^* \tr Q^3 + c^* \abs{Q}^4 \geq - a \abs{Q}^2 - \frac{s_*b}{3} \abs{Q}^3 + \frac{2s_*^2c}{3} \abs{Q}^4 .
\]
It is readily seen that the right-hand side is positive, for $\abs{Q}^2 > 1$.

Finally, let us check the condition \ref{hp: df}. For a fixed $Q\in \NN$, there exists $n\in S^2$ such that $Q = \phi(n)$, where $\phi$ is the smooth mapping defined by \eqref{phi}. Up to rotating the coordinate frame, we can assume without loss of generality that $n = e_3$. By formula \eqref{d phi}, we see that the vectors
\[
 X_k = \sqrt{\frac32} (e_k\otimes e_3 + e_3 \otimes e_k) , \qquad k \in \{1, \, 2\}
\]
form a basis for the the tangent plane to $\NN$ at the point $Q$. As a consequence, $P\in\Sz$ is a normal vector to $\NN$ at $Q$ if and only if $P : X_1 = P : X_2 = 0$ or, equivalently, iff it has the form
\begin{equation} \label{forma p}
P = \left[ \begin{matrix}
            p_1 & p_2 & 0 \\
            p_2 & p_3 & 0 \\
            0   & 0   & - p_1 - p_3
           \end{matrix}
 \right]  .
\end{equation}
It is easily checked that $PQ = QP$. Now, we compute
\begin{align*}
 Df^*(Q + tP):P &= - a^*(Q + tP):P - b^*(Q + tP)^2:P + c^* \tr(Q + tP)^2 (Q + tP):P \\
                &= t \left\{ -a^* \abs{P}^2 - 2 b^* (PQ):P + 2c^* (\tr PQ)^2  + c^* \abs{P}^2 \right\} + O(t^2)
\end{align*}
(we have used that $Df^*(Q) = 0$ and that $Q:P = \tr PQ$). By \eqref{forma p}, we have
\[
(PQ) : P = \frac13 \sqrt{\frac32} \left\{2(p_1 + p_3)^2 - p_1^2 - p_3^2 - 2p_2^2 \right\} \leq \frac12 \sqrt{\frac32}(p_1 + p_3)^2 , \qquad \tr PQ = - \sqrt{\frac32} (p_1 + p_3)
\]
and hence
\[
Df^*(Q + tP) : P \geq t\left(-a^* + c^* \right)\abs{P}^2 + t \left( 3c^* - \sqrt{\frac32}b^* \right)(p_1 + p_3)^2 + O(t^2) .
\]
The coefficients in the right-hand side are readily shown to be non negative: more precisely, 
\[
-a^* + c^* = \frac{1}{12c} \left\{ b^2 + b \sqrt{b^2 + 24ac}\right\} > 0 , \qquad
3c^* - \sqrt{\frac32}b^* = \frac12 s_*\left\{\sqrt{b^2 + 24ac} - b \right\} \geq 0 .
\]
Thus, we have proved that $\partial_P^2 f(Q) \geq - a^* + c^* > 0$. 
\endproof

Finally, let us point out a metric property of $\NN$. We know that the parameter $\kappa_*$, defined by \eqref{kappa *}, can take a unique positive value, corresponding to a geodesic loop in $\NN$ which generates $\pi_1(\NN)$. Since we aim to calculate it, we need to characterize the geodesic of $\NN$. Some help is provided by the mapping $\phi$ that we have introduced in \eqref{phi}.

\begin{lemma}
 For all $n\in S^2$ and all tangent vector $v\in T_n S^2$, it holds that
 \begin{equation} \label{metric}
  \abs{\left\langle d\phi(n), \, v\right\rangle} = \sqrt2 s_* \abs{v} ,
 \end{equation}
 that is, the differential of $\phi$ is a homothety at every point. In addition, the geodesics of $\NN$ are exactly the images via $\phi$ of the geodesics of $S^2$, that is, great circles parametrized proportionally to arc length.
\end{lemma}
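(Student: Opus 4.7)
The plan is to establish \eqref{metric} by a direct computation starting from formula \eqref{d phi}, and then deduce the geodesic statement by exploiting the fact that $\phi$ induces, via the universal covering $S^2 \to \PR$, a global homothety from $\PR$ onto $\NN$.

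For the first part, by \eqref{d phi} one has $\langle d\phi(n), v\rangle = s_*(n\otimes v + v\otimes n)$. Writing this symmetric matrix in coordinates, $(n\otimes v + v\otimes n)_{ij} = n_iv_j + v_in_j$, and expanding,
\[
 \abs{n\otimes v + v\otimes n}^2 = \sum_{i,j}(n_iv_j + v_in_j)^2 = 2\abs{n}^2\abs{v}^2 + 2(n\cdot v)^2 .
\]
Since $v\in T_nS^2$ means $n\cdot v = 0$ and $\abs{n} = 1$, this reduces to $2\abs{v}^2$, which yields \eqref{metric}.

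For the second claim, observe that $\phi(-n) = \phi(n)$, so $\phi$ factors through the universal covering $S^2\to\PR$ as a diffeomorphism $\bar\phi\colon \PR\to\NN$, which we already used in Proposition~\ref{prop: lower estimate f}. Property \eqref{metric} exactly expresses that, at each point, $\bar\phi$ is a homothety with constant ratio $\sqrt 2\, s_*$, when $\PR$ is endowed with the round metric and $\NN$ inherits its metric from $\Sz$. Equivalently, the pullback metric $\bar\phi^*g_\NN$ equals $2s_*^2$ times the round metric. Since two Riemannian metrics which differ by a constant multiplicative factor share the same Levi-Civita connection, they have the same geodesics (as unparametrized curves), with a common affine parametrization. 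Hence $\bar\phi$ sends geodesics of $\PR$ to geodesics of $\NN$ parametrized proportionally to arc length, and conversely every geodesic of $\NN$ can be lifted through $\bar\phi^{-1}$ and the covering projection to a geodesic of $S^2$. Since the geodesics of $S^2$ are exactly the great circles parametrized proportionally to arc length, the geodesics of $\NN$ are exactly their images under $\phi$.

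There is no real obstacle in the argument: the only point deserving attention is the simple Riemannian remark that rescaling a metric by a positive constant leaves the geodesics invariant, which is what makes the ``homothety'' statement \eqref{metric} sufficient to transport the geodesic structure of $S^2$ onto $\NN$.
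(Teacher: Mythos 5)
Your proof is correct and follows essentially the same route as the paper: a direct expansion of $\abs{n\otimes v + v\otimes n}^2$ using $n\cdot v=0$ to get \eqref{metric}, and then the observation that the pullback metric is a constant multiple ($2s_*^2$) of the round metric, so the Levi-Civita connections and hence the geodesics coincide. The only cosmetic difference is that you factor $\phi$ through the covering $S^2\to\PR$ and work with the induced diffeomorphism onto $\NN$, while the paper argues directly with $\phi^* h = 2s_*^2 g$ on $S^2$; both arguments are equivalent.
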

\proof
It follows plainly from \eqref{d phi} that
\[
 \abs{\left\langle d\phi(n), \, v\right\rangle}^2 = 
 2s_*^2 \sum_{i, \, j}\left(n_i v_j n_i v_j + n_i v_j v_i n_j\right) .
\]
As $v$ is tangent to the sphere at the point $n$, we have $v \cdot n = 0$, so the second term in the summation vanishes, and we recover \eqref{metric}.

Denote by $g, \, h$ the first fundamental forms on $S^2, \, \NN$ respectively (that is, the metrics these manifolds inherit from being embedded in an Euclidean space). In terms of pull-back metrics, Equation \eqref{metric} reads
\[
 \phi^* h = 2 s_*^2 g
\]
and, since the scaling factor $2s_*^2$ is constant, the Levi-Civita connections associated with $\phi^*h$ and $g$ coincide: this can be argued, for instance, from the standard expression for Christoffel symbols
\[
    \Gamma_{jk}^i=\frac12 g^{il} \left( \frac{\partial g_{lj} }{\partial x^k} +\frac{\partial g_{lk}}{\partial x^j}  -\frac{\partial g_{jk}}{\partial x^l}  \right)  .
\]
As a consequence, we derive the characterization of geodesics in $\NN$.
\endproof

Proving the following result is not difficult, once we know what the geodesics in $\NN$ are. The proof is left to the reader.

\begin{cor} \label{cor: bordo geodesico}
The curve 
 \begin{equation} \label{bordo geodesico}
 c_* \colon \theta\in[0, \, 2\pi] \mapsto \sqrt{\frac32} \left( n_0(\theta)^{\otimes 2} - \frac13 \Id\right)  ,
 \end{equation}
 where $n_0(\theta) = \left[\cos(\theta/2), \, \sin(\theta/2), \, 0\right]^T$, minimizes the functional
 \[
  c \in H^1(S^1, \, \NN) \mapsto \frac{1}{2} \int_0^{2\pi} \abs{c'(\theta)}^2 \, \d\theta
 \]
 among the non-homotopically trivial loops in $\NN$. In particular,
 \[
  \kappa_* = 
  \frac34 \pi  .
 \]
\end{cor}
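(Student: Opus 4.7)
\textbf{Proof plan for Corollary~\ref{cor: bordo geodesico}.}

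The strategy is to transfer the question from $\NN$ to the round sphere $S^2$ via the covering map $\phi$ from \eqref{phi}, using the fact, established in the previous lemma, that $\phi$ is a homothety at every point with factor $\sqrt{2}s_* = \sqrt{3}$ (in the rescaled setting where $s_* = \sqrt{3/2}$), and that it sends great circles in $S^2$ to geodesics in $\NN$. Since $\phi(n) = \phi(-n)$, a loop $c \colon S^1 \to \NN$ is non-homotopically trivial if and only if it lifts to a path in $S^2$ joining two antipodal points $n$ and $-n$. Such a lift has length at least $\pi$ (the intrinsic distance between antipodal points), with equality iff it is a half great circle, so by the homothety property the $\NN$-length of any non-trivial loop is at least $\sqrt{3}\pi$, achieved precisely by images of half great circles.

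Next, for any $c \in H^1(S^1, \NN)$ of length $L$, the Cauchy--Schwarz inequality gives
\[
L^2 = \Bigl(\int_{S^1} \abs{c'(\theta)} \d\theta\Bigr)^2 \leq 2\pi \int_{S^1} \abs{c'(\theta)}^2 \d\theta ,
\]
with equality iff $\abs{c'}$ is constant. Combined with the lower bound $L \geq \sqrt{3}\pi$, this shows that the infimum of $\frac12 \int_0^{2\pi} \abs{c'}^2 \d\theta$ over non-trivial loops equals $\frac{(\sqrt{3}\pi)^2}{4\pi} = \frac{3\pi}{4}$ and is attained exactly by uniformly parametrized images of half great circles. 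It remains to verify that $c_*$ is of this form: the lift $n_0(\theta) = [\cos(\theta/2), \sin(\theta/2), 0]^T$ parametrizes a half great circle from $[1,0,0]^T$ to $[-1,0,0]^T$ with constant speed $\abs{n_0'(\theta)} = 1/2$, so by the homothety $\abs{c_*'(\theta)} = \sqrt{3}/2$ is constant and $\frac12\int_0^{2\pi}\abs{c_*'}^2 \d\theta = \frac12\cdot 2\pi\cdot \tfrac34 = \tfrac{3\pi}{4}$, confirming that $c_*$ achieves the minimum.

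Finally, for the value of $\kappa_*$, I would observe that $\pi_1(\NN) \simeq \mathbb Z/2\mathbb Z$ has only one non-trivial conjugacy class $\gamma$, and from the preceding analysis $\lambda(\gamma)^2 = 2\pi \cdot 2\pi \cdot \tfrac34 = 3\pi^2$. Any decomposition $\gamma \in \prod_{i=1}^k \gamma_i$ in \eqref{lambda *} must contain an odd number of non-trivial factors; trivial factors contribute $\lambda(\epsilon) = 0$ and so do not help, while adding extra non-trivial factors only increases $\sum \lambda(\gamma_i)^2$. Hence the infimum is realized by $k=1$ with $\gamma_1 = \gamma$, giving $\lambda_*(\gamma) = \tfrac{3\pi}{4}$, and the definition \eqref{kappa *} yields $\kappa_* = \tfrac{3\pi}{4}$.

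There is no real obstacle here once the previous lemma is in hand; the only mildly delicate point is the decomposition argument for $\lambda_*$, which is trivialized by the simple group structure of $\pi_1(\PR)$.
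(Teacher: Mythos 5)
Your argument is correct and follows exactly the route the paper intends: the proof is explicitly left to the reader after the homothety lemma, and your combination of (i) lifting non-trivial loops through the double cover $S^2\to\PR$ to paths between antipodes of length at least $\pi$, (ii) the homothety factor $\sqrt2 s_*=\sqrt3$, (iii) Cauchy--Schwarz with equality for constant speed, and (iv) the trivial decomposition analysis in $\mathbb Z/2\mathbb Z$ for $\lambda_*$ and $\kappa_*$ is precisely the expected completion. The numerical checks ($\lambda(\gamma)^2=3\pi^2$, $\frac12\int_0^{2\pi}\abs{c_*'}^2\,\d\theta=\frac{3\pi}{4}$) are all accurate.
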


 \subsection{Energy estimates for an almost uniaxial map}
 
 In this subsection, we aim to establish a lower estimate for the energy of uniaxial maps. As a first step, we examine the potential $f^*$, and we provide the following result, which improves slightly \cite[Proposition 9]{MZ}.

 \begin{lemma} \label{lemma: f below LG}
 For all $Q\in \Sz$ with $\abs{Q} \leq 1$, the bulk potential is bounded by
 \begin{equation} \label{f below LG 1}
  f^*(Q) \geq \mu_1 \left(1 - \abs{Q}\right)^2 + \sigma \beta(Q)\abs{Q}^3
 \end{equation}
 and
  \begin{equation} \label{f below LG 2}
 f^*(Q) \leq  \mu_2 \left( 1 - \abs{Q}\right)^2 + 2\sigma \beta(Q) \abs{Q}^3 ,
 \end{equation}
 where $\sigma = \sigma(a, \, b,\, c)$, $\mu_i = \mu_i(a, \, b, \, c)$ for $i \in\{1, \, 2\}$ are explicitly computable positive constants. Moreover, setting $t := ac/b^2$ we have
 \begin{equation} \label{mu diverges}
 \frac{\sigma}{a}(t) = O(t^{-1/2}) \qquad \textrm{and} \qquad \frac{\mu_1}{a}(t) = O(1) \qquad \textrm{as } t\to + \infty  .
 \end{equation}
 \end{lemma}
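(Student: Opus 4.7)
The plan is to reduce the inequality to a scalar analysis via the representation of $Q$-tensors. Given $Q \in \Sz \setminus \{0\}$ with $|Q| \leq 1$, set $u := |Q|$ and observe that the defining identity $\beta(Q) = 1 - 6(\tr Q^3)^2/(\tr Q^2)^3$ yields
\[
 \tr Q^3 = \frac{\eta}{\sqrt 6}\sqrt{1 - \beta(Q)}\, u^3, \qquad \eta := \mathrm{sign}(\tr Q^3) \in \{-1,+1\}.
\]
Substituting in the expression of $f^*$ produces the key splitting
\[
 f^*(Q) = h(u) + \frac{b^*}{3\sqrt{6}}\, u^3 \left(1 - \eta\sqrt{1 - \beta(Q)}\right),
\]
where $h(u) := -\tfrac{a^*}{2}(u^2 - 1) - \tfrac{b^*}{3\sqrt{6}}(u^3 - 1) + \tfrac{c^*}{4}(u^4 - 1)$ is (up to the normalization by the vacuum value) $f^*$ restricted to prolate uniaxial configurations with $|Q| = u$.

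The first task is to analyze $h$. A direct computation, using the defining identity $2 c s_*^2 - b s_* - 3a = 0$ for $s_*$, shows that $h(1) = h'(1) = 0$ and $h''(1) = 2a + s_* b /3 > 0$, so that $u = 1$ is a strict non-degenerate minimum of $h$. I then factor $h(u) = (1 - u)^2 B(1 - u)$, where $B$ is a polynomial of degree two with positive leading coefficient $s_*^2 c/6$. A short computation gives the endpoint values
\[
 B(0) = a + \frac{s_* b}{6}, \qquad B(1) = \frac{s_* b}{36} + \frac{a}{4},
\]
and the unique critical point $w_* = (4 b s_* + 18 a)/(6 s_*^2 c)$ of $B$ satisfies $w_* > 1$; via the $s_*$-identity, this reduces to the trivial inequality $b s_* + 9a > 0$. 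Hence $B$ is monotone on $[0, 1]$, so setting $\mu_1 := B(1)$ and $\mu_2 := B(0)$ one obtains $\mu_1 (1 - u)^2 \leq h(u) \leq \mu_2 (1 - u)^2$ on $[0, 1]$.

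The biaxial remainder is handled by the elementary estimate
\[
 \frac{\beta}{2} \;\leq\; \frac{\beta}{1 + \sqrt{1 - \beta}} \;=\; 1 - \sqrt{1 - \beta} \;\leq\; 1 - \eta\sqrt{1 - \beta},
\]
which, combined with the first step, yields the lower bound with $\sigma := b^*/(6\sqrt{6}) = s_* b / 18$. For the upper bound, in the prolate regime $\eta = +1$ (which is the relevant one for applications, namely for $Q$ near $\NN$) one has $1 - \eta\sqrt{1 - \beta} \leq \beta$, producing the factor $2\sigma$. Finally, the asymptotics as $t = ac/b^2 \to +\infty$ are read from $s_* = (b + \sqrt{b^2 + 24ac})/(4c) \sim \sqrt{24 ac}/(4c)$, whence $s_* b / a \sim \sqrt{24}/(4\sqrt{t})$, giving $\sigma/a = O(t^{-1/2})$ and $\mu_1/a = 1/4 + s_* b/(36 a) = 1/4 + O(t^{-1/2}) = O(1)$.

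The main obstacle is the globalization of the local Taylor estimate $h(u) \approx \tfrac{1}{2} h''(1) (u - 1)^2$ to the whole interval $[0, 1]$: the cubic term in $h$ has a sign which disfavors the quadratic lower bound away from $u = 1$, and without the positive quartic contribution the estimate would fail near $u = 0$. The monotonicity-plus-endpoint argument for $B$ absorbs all three terms simultaneously and reduces positivity to the single elementary inequality $b s_* + 9 a > 0$, at which point both bounds follow cleanly and the asymptotic behavior is immediate.
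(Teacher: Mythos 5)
Your treatment of the lower bound and of the asymptotics is correct and is essentially the paper's argument, made self-contained. The splitting $f^*(Q)=h(u)+\frac{b^*}{3\sqrt6}u^3\bigl(1-\eta\sqrt{1-\beta(Q)}\bigr)$ with $u=\abs{Q}$ is exactly the structure the paper imports from the proof of \cite[Proposition 9]{MZ}: writing $X=1-u$, your quadratic $B(X)$ coincides with the paper's bracket $\frac a2+\frac{s_*^2c}{3}+(\frac{bs_*}{9}-\frac{2s_*^2c}{3})X+\frac{s_*^2c}{6}X^2$ (use $2cs_*^2=bs_*+3a$ to see that $\frac a2+\frac{s_*^2c}{3}=a+\frac{s_*b}{6}$), your observation that the vertex $w_*>1$ is the paper's remark that $-B/(2C)>1$, and your constants $\mu_1=\frac a4+\frac{s_*b}{36}$, $\mu_2=a+\frac{s_*b}{6}$, $\sigma=\frac{s_*b}{18}$ and the limits \eqref{mu diverges} agree with the paper's. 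I checked the computations ($h'(1)=0$, $h''(1)=2a+\frac{s_*b}{3}$, the values $B(0)$, $B(1)$, the critical point, the factor $\frac{b^*}{3\sqrt6}=\frac{s_*b}{9}$), and the elementary estimate $1-\eta\sqrt{1-\beta}\geq 1-\sqrt{1-\beta}=\frac{\beta}{1+\sqrt{1-\beta}}\geq\frac\beta2$ gives \eqref{f below LG 1} exactly as stated. The only genuine difference is that you derive the radial inequality directly from the exact identity $\tr Q^3=\pm\abs{Q}^3\sqrt{(1-\beta(Q))/6}$ instead of quoting \cite{MZ}, which is a gain in transparency.

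For the upper bound \eqref{f below LG 2}, however, your proof only covers the case $\eta=+1$, i.e.\ $\tr Q^3\geq 0$, and you dismiss the other case in a parenthetical. Be aware of two things. First, the restriction is not a removable gap in your argument but a defect of the statement itself: for the oblate uniaxial tensor $Q=-\sqrt{3/2}\,(p^{\otimes2}-\frac13\Id)$ one has $\abs{Q}=1$ and $\beta(Q)=0$, yet $f^*(Q)=\frac{2b^*}{3\sqrt6}=\frac{2s_*b}{9}>0$, so \eqref{f below LG 2} fails as written; the paper's own derivation uses the inequality $\pm\abs{Q}^3\sqrt{(1-\beta)/6}\leq\frac{\abs{Q}^3}{\sqrt6}(1-\beta)$, which is the wrong direction for an upper bound on $f^*$ (one needs a lower bound on $\tr Q^3$, and that only holds in the prolate regime, which is precisely your estimate $1-\sqrt{1-\beta}\leq\beta$). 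So your restricted version is the correct one, but you should state the hypothesis $\tr Q^3\geq0$ explicitly rather than prove a weaker statement than the one claimed without flagging it. Second, your justification that the prolate regime is ``the relevant one for applications'' is not accurate: in Section 3 the upper bound is applied to the comparison map $\Pe$ of \eqref{biaxial 2}, which is oblate near the defect core (oblate uniaxial at $r_\varepsilon=1$), so anyone invoking the lemma there must instead estimate $f^*(\Pe)$ directly from the splitting, e.g.\ using $1-\eta\sqrt{1-\beta}\leq 2$ together with the radial bound, or reformulate the remainder as $\frac{b^*}{3}\bigl(\frac{\abs{Q}^3}{\sqrt6}-\tr Q^3\bigr)$.
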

 
\proof
Set $X := 1 - \abs{Q}$ and $Y := s_*\sqrt{2/3} - \abs{Q}$, for simplicity. We focus, at first, on the lower bound. The non-rescaled bulk potential $f$ satisfies the inequality
\begin{equation} \label{f below LG 11}
 f(Q) \geq Y^2 \left\{ \frac a2 + \frac{s_*^2c}{3} + \left(\frac{b}{3\sqrt 6} - c\sqrt{\frac23}s_* \right) Y + \frac c4 Y^2 \right\} +
 \frac{b}{6\sqrt 6} \beta(Q) \abs{Q}^3  ,
\end{equation}
which is a byproduct of the proof of \cite[Proposition 9]{MZ}. Thus, $f^*$ fulfills
\[
 f^*(Q) \geq X^2 \left\{\frac a2 + \frac{s_*^2c}{3} + \left(\frac{bs_*}{9} - \frac{2s_*^2c}{3} \right) X + \frac{s_*^2c}{6} X^2 \right\}
 + \frac{b s_*}{18} \beta(Q) \abs{Q}^3
\]
We set $\sigma := bs_*/18$. Now, we pay attention to the terms in brace: our goal is to minimize the mapping
\[
\phi\colon X \in [0, \, 1] \mapsto A + B X + C X^2   ,
\]
where we have set
\[
 A = \frac a2 + \frac{s_*^2c}{3}  , \qquad B = \frac{bs_*}{9} - \frac{2s_*^2c}{3}  , \qquad C = \frac{s_*^2c}{6}  ,
\]
in order to recover the lower estimate \eqref{f below LG 1} with $\mu_1 := \min_{[0, \, 1]} \phi$. It can be easily computed that $\phi$ attains its global minimum on $\R$ at the point
\[
-\frac{B}{2C} = 2 - \frac{b}{3s_* c} = 2 - \frac{4}{3\sqrt{1 + 24t} + 3}  ,
\]
and that $\mu_1 \geq \phi(-B/2C) > 0$, for all the possible values of $a, \, b, \, c$. We have proved the lower bound for~$f^*$.

We can argue analogously to bound $f^*$ from above. We consider the proof of \cite[Proposition 9]{MZ}, and use the inequality
\[
 \tr Q^3 = \pm \abs{Q}^3 \sqrt{\frac{1 - \beta(Q)}{6}} \leq \frac{\abs{Q}^3}{\sqrt6} \left(1 - \beta(Q) \right)
\]
in Equations (133)--(134). With the same calculations as in (135)--(138), we obtain
\[
 f(Q) \leq Y^2 \left\{ \frac a2 + \frac{cs_*^2}{3} + \left(\frac{b}{3\sqrt 6} - c\sqrt{\frac23}s_* \right) Y + \frac c4 Y^2 \right\} +
 \frac{b}{3\sqrt 6} \beta(Q) \abs{Q}^3  ,
\]
in place of the inequality \eqref{f below LG 11}. Thus, we can establish the upper estimate \eqref{f below LG 2}, with $\mu_2 := \max_{[0, \, 1]} \phi$.

Now, we investigate the behavior of $\mu_1$ and $\sigma$ as $t = ac/b^2 \to +\infty$. When $t$ is large enough, $-B/2C > 1$ and
\[
\mu_1 = \phi(1) = A + B + C  , 
\]
where the last expression can be explicitly calculated through simple algebra:
\[
 \mu_1 = \frac{a}{2} - \frac{s_*^2c}{6} + \frac{s_*b}{9} = \frac{36ac + b\sqrt{b^2 + 24ac} + b^2}{144c} = a \frac{36t + \sqrt{1 + 24 t} + 1}{144t}  .
\]
On the other hand,
\[
  \sigma = \frac{s_*b}{18} = \frac{b^2 + b \sqrt{b^2 + 24ac}}{72c} = a\frac{1 + \sqrt{1 + 24 t}}{72 t},
\]
and we conclude \eqref{mu diverges}.
\endproof

We see from the estimates \eqref{f below LG 1} and \eqref{f below LG 2} that $\mu_1, \, \mu_2$ and $\sigma$ can be understood as parameters governing the energy cost of uniaxiality and biaxiality, respectively. Furthermore, \eqref{mu diverges} suggests that biaxial solutions are energetically favorable when $t$ is large, that is, when $b$ is small, compared to $ac$. 

\begin{remark}
  In the limiting case $b = 0$, we can write $f$ as a function of $\abs{Q}$ only:
 \[
  f(Q) = k - \frac a2 \abs{Q}^2 + \frac c4 \abs{Q}^4  .
 \]
 The associated vacuum manifold is the unit sphere of $\Sz$, which we still denote by $S^4$. Since the latter is simply connected, the set $H^1_g(\Omega, \, S^4)$ is non empty for every choice of boundary datum. Thus, the solutions of Problem \eqref{energy} are the harmonic maps $\Omega \to S^4$. 
\end{remark}

The following lemma shows that almost uniaxial maps, just as uniaxial ones, must vanish at some point. This property will help us to obtain a lower bound for their energy.

\begin{lemma} \label{lemma: retract}
 If $Q\in C^1(\Omega, \, \Sz)$ is almost uniaxial with $\left.Q\right|_{\partial\Omega} = g$, and the boundary datum $g$ is not homotopically trivial, then $\min_{\overline\Omega} \abs{Q} = 0$.
\end{lemma}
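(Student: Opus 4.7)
The plan is to argue by contradiction: assume $\min_{\overline\Omega}\abs{Q} > 0$. Under this assumption, I will exploit the almost-uniaxial hypothesis to build a continuous retraction from the image of $Q$ onto $\NN$; composed with $Q$, this retraction will yield a continuous extension of $g$ to $\overline\Omega$ with values in $\NN$, in contradiction with Definition~\ref{def: trivial}.

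I start by locating the image of $Q$. Since $Q$ never vanishes on $\overline\Omega$, the formulas \eqref{representation 2} define a continuous function $r\circ Q \colon \overline\Omega \to [0, 1]$. By Remark~\ref{remark: beta lambda}, the assumption $\max_{\overline\Omega}\beta(Q) < 1$ forces $r(Q(x)) \neq 1/2$ for every $x$. On $\partial\Omega$, $g$ takes values in $\NN$, so $r(Q) = 0$ there; by connectedness of $\overline\Omega$ this implies $r(Q(x)) < 1/2$ on all of $\overline\Omega$, equivalently $\lambda_1(Q(x)) > \lambda_2(Q(x))$.

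Next comes the construction of the retraction. Let $V := \{P \in \Sz\setminus\{0\} \colon \lambda_1(P) > \lambda_2(P)\}$. On $V$ the top eigenvalue is simple, so the rank-one orthogonal projection $\Pi(P)$ onto its eigenspace depends continuously on $P$ (this is classical perturbation theory for matrices; one may represent $\Pi(P)$ as a Riesz projection associated with a small contour around $\lambda_1(P)$). Set $\rho(P) := s_*\bigl(\Pi(P) - \Id/3\bigr)$. By Proposition~\ref{prop: lower estimate f}, $\rho$ takes values in $\NN$; and a direct computation shows that for $P = s_*(n^{\otimes 2} - \Id/3) \in \NN$ one has $\Pi(P) = n\otimes n$, hence $\rho(P) = P$, i.e., $\rho|_\NN = \mathrm{id}_\NN$. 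Therefore $\rho \circ Q\colon \overline\Omega \to \NN$ is continuous and coincides with $g$ on $\partial\Omega$, yielding the desired extension and the contradiction.

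The only nontrivial ingredient is the continuous dependence of the top-eigenspace projection on $V$, which is standard; I expect no serious obstacle.
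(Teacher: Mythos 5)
Your argument is correct, and its skeleton coincides with the paper's: argue by contradiction, use continuity of $r\circ Q$ together with connectedness of $\overline\Omega$ and the fact that $r(g)=0$ on $\partial\Omega$ to get $r(Q)<1/2$ everywhere, and then compose $Q$ with a retraction onto $\NN$ to produce a continuous $\NN$-valued extension of $g$, contradicting Definition~\ref{def: trivial}. Where you genuinely differ is the construction of the retraction. The paper builds a deformation retraction of the set $\NN_0=\{P\colon s(P)\geq s_0,\ r(P)\leq r_0<1/2\}$ onto $\NN$ via the representation of Lemma~\ref{lemma: representation}, interpolating both $s$ and $r$; this obliges it to verify by hand the continuity of the eigenvector fields, with a separate case when $r(P)=0$, where the second eigenvector $m$ is not well defined. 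You instead write down a plain retraction $\rho(P)=s_*\bigl(\Pi(P)-\Id/3\bigr)$ with $\Pi(P)$ the spectral projection onto the leading eigenspace; since only the top eigenvalue enters, and it is simple whenever $r(P)<1$ (in particular on the image of $Q$), continuity follows at once from standard perturbation theory (Riesz projection), and no case distinction is needed. A deformation retraction is not required for the topological conclusion, so your construction is leaner, at the price of invoking the Riesz-projection formalism instead of the paper's elementary eigenvector-continuity argument. Two cosmetic points: with the normalization adopted in Section~\ref{sect: biaxial} (the rescaled potential $f^*$), the vacuum manifold consists of the tensors $\sqrt{3/2}\,(n^{\otimes 2}-\Id/3)$, so the factor in $\rho$ should match whichever scaling is in force; and the assertion that $r<1/2$ is \emph{equivalent} to $\lambda_1>\lambda_2$ is an overstatement --- with the labeling of Lemma~\ref{lemma: representation}, $r<1/2$ amounts to $\lambda_2<0$, while $\lambda_1>\lambda_2$ holds for every $r<1$ --- but only the implication $r<1/2\Rightarrow\lambda_1>\lambda_2$ is used, and that implication is true, so the proof is unaffected.
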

\proof
We argue by contradiction, and assume that $s_0 := \sqrt{3/2}\min_{\overline\Omega} \abs{Q} > 0$. The almost uniaxiality hypothesis entails, in view of Remark~\ref{remark: beta lambda}, that 
$\lambda(Q(x))\neq 1/2$ for all $x\in\overline\Omega$. Since the image of $g$ lies in the vacuum manifold, by a connectedness argument we conclude that 
$r_0 := \max_{\overline\Omega} r(Q) < 1/2$. By \eqref{s abs}, the image of $Q$ is contained in the set
\[
 \NN_0 := \left\{ P \in \Sz \colon s(P) \geq s_0, \, r(P) \leq r_0 < 1/2 \right\}  .
\]
In particular, the boundary datum $g$ is homotopically trivial in $\NN_0$.

We claim that $\NN_0$ retracts by deformation on the vacuum manifold $\NN$. This entails the conclusion of the proof: composing $Q$ with the retraction yields a continuous extension of $g$ to a map $\Omega \to \NN$, which contradicts the nontriviality of $g$.

To construct a retraction, we exploit the representation formula of Lemma~\ref{lemma: representation}, and define the functions $K, \, H : \NN_0 \times [0, \, 1] \to \NN_0$ by
\[
K(P, \, t) := n^{\otimes 2} - \frac13 \Id + rt \left( m^{\otimes 2} - \frac13 \Id \right)  , \qquad 
 H(P, \, t) := \left\{ts + (1 - t)\sqrt{3/2}\right\} K (P, \, t)  .
\]
By the formulae \eqref{representation 2} and the continuity of the eigenvalues as functions of $P$, the mapping $P \mapsto (s(P), \, r(P))$ is well-defined and continuous on $\NN_0$. As
a consequence, $H$ is well-defined and continuous, if $K$ is. In addition, $H$ enjoys these properties: for all $P\in \NN_0$, we have $H(P, \, 1) = P$ and $H(P, \, 0)\in \NN$,
whereas $H(P, \, t) = P$ for all $(P, \, t)\in \NN\times [0, \, 1]$. 
It only remains to check that $K$ is well-defined and continuous.

Remark that each $P\in \NN_0$ has the leading eigenvalue of multiplicity one; in particular, $n = n(P)$ is uniquely determined, up to a sign, and $n^{\otimes 2}$ is well-defined. 
In case $r = r(P)\neq 0$, the second eigenvalue is simple as well, and the same remark applies to $m$. If $r(P) = 0$ then $K(P, \, t)$ is equally well-defined, 
regardless of the choice of $m$.

We argue somehow similarly for the continuity. If $\left\{(P_k, \, t_k)\right\}_{k\in\N}$ is a sequence in $\NN_0 \times [0, \, 1]$ converging to a fixed $(P, \, t)$, then
\begin{align*}
 \abs{K(P_k, \, t_k) - K(P, \, t)} \leq& \abs{n^{\otimes 2}(P_k) - n^{\otimes 2}(P)} + \abs{t_k - t}r(P_k) \abs{m^{\otimes 2}(P_k) - \frac13 \Id} \\
 &+ t \abs{r(P_k)m^{\otimes 2}(P_k) - r(P)m^{\otimes 2}(P)} + t \abs{r(P_k) - r(P)}  .
 \end{align*}
As the leading eigenvalue of $P\in  \NN_0$ is simple, standard results about the continuity of eigenvectors (see, for instance, \cite[Property 5.5, p.190]{QSS}) imply $n^{\otimes 2}(P_k) \to n^{\otimes 2}(P)$. 
If $r(P) = 0$, this is enough to conclude, since
\[
 \abs{K(P_k, \, t_k) - K(P, \, t)} \leq t r(P_k) \abs{m^{\otimes 2}(P_k)} + o(1) \to t r(Q) = 0
\]
as $k \to +\infty$. On the other hand, if $r(P) \neq 0$, then all the eigenvalues of $P$ are simple, and hence $m^{\otimes 2}(P_k) \to m^{\otimes 2}(P)$. 

Therefore, we can conclude that $K$ is continuous and $\NN_0$ retracts by deformation on $\NN$.
\endproof

The following proposition is the key element in the proof of Theorem~\ref{thm: intro-biaxial}. It is adapted from~\cite{Ch}, with minor changes. We report here the proof, for the reader's convenience.

\begin{prop} \label{prop: lower LG}
There exists a constant $M_1$ (independent on $\varepsilon$, $\lambda$ and $\mu_1$) such that, for all $Q\in C^1(\Omega, \, \Sz)$ fulfilling
\begin{equation} \label{hp lower LG}
 \min_{\overline\Omega}\abs{Q} = 0  , \qquad  \max_{\overline\Omega}\abs{Q} = 1 \qquad \textrm{and} \qquad \left. Q \right|_{\partial\Omega} = g,
\end{equation}
it holds that 
\[
 E_\varepsilon(Q) \geq \kappa_* \abs{\log\varepsilon} + \frac{\kappa_*}{2}\log\mu_1 - M_1  .
\]
\end{prop}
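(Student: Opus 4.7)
The plan is a Ginzburg--Landau-type argument, combining the pointwise lower bound on $f^*$ with a Sandier--Jerrard ball construction to extract the logarithmic divergence from the topological non-triviality of the boundary datum.

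\textbf{Step 1: Rescaling.} Since $\max_{\overline\Omega}|Q|\le 1$, Lemma~\ref{lemma: f below LG} gives $f^*(Q)\ge\mu_1(1-|Q|)^2$. Setting $\tilde\varepsilon:=\varepsilon/\sqrt{\mu_1}$, the identity $\kappa_*|\log\tilde\varepsilon|=\kappa_*|\log\varepsilon|+\tfrac{\kappa_*}{2}\log\mu_1$ reduces the inequality to
\[
\widetilde E(Q):=\int_\Omega\Bigl\{\tfrac12|\nabla Q|^2+\tilde\varepsilon^{-2}(1-|Q|)^2\Bigr\}\ge\kappa_*|\log\tilde\varepsilon|-M_1.
\]

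\textbf{Step 2: Localizing the core.} For a small $\eta>0$, the full inequality of Lemma~\ref{lemma: f below LG} combined with compactness of $\NN$ gives $f^*\ge c(\eta)>0$ whenever $\dist(Q,\NN)\ge\eta$; the ``core'' $F_\eta:=\{x:\dist(Q(x),\NN)\ge\eta\}$ therefore satisfies $|F_\eta|\le C\varepsilon^2\widetilde E(Q)/c(\eta)$, and is non-empty because $\min|Q|=0$. A Sandier--Jerrard ball construction then produces a disjoint family of closed disks $\overline B(a_j,r_j)$ ($j=1,\ldots,k$) covering $F_\eta$ with $r_j\ge c\,\tilde\varepsilon$, together with pairwise disjoint concentric annuli $A_j=B(a_j,R_j)\setminus B(a_j,r_j)\subset\Omega\setminus F_\eta$.

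\textbf{Step 3: Topological input and annular estimate.} On $\Omega\setminus\bigcup_j B(a_j,r_j)$, $Q$ lies in the tubular neighborhood of $\NN$ of Subsection~\ref{subsect: geometric lemma}, so $v:=\pi\circ Q$ defines a continuous $\NN$-valued extension of $g$. Denote by $\gamma_j$ the free homotopy class of $v|_{\partial B(a_j,r_j)}$. Lemma~\ref{lemma: extension} applied on the multiply connected domain $\Omega\setminus\bigcup_j B(a_j,r_j)$ yields $\prod_j\gamma_j\cap\prod_i\gamma_i^{\mathrm{bd}}\ne\emptyset$, hence by the definition \eqref{kappa *} of $\kappa_*$,
\[
\frac{1}{4\pi}\sum_j\lambda(\gamma_j)^2\ge\kappa_*.
\]
On each $A_j$, polar coordinates around $a_j$ combined with Cauchy--Schwarz and definition \eqref{lambda} give $\int_{A_j}\tfrac12|\nabla v|^2\ge\tfrac{\lambda(\gamma_j)^2}{4\pi}\log(R_j/r_j)$, and Lemma~\ref{lemma: npp} (applied with $\sigma\le\eta$) transfers this to $|\nabla Q|^2$ up to a multiplicative factor $1-O(\eta)$.

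\textbf{Step 4: Conclusion and main obstacle.} Summing over $j$ and choosing $\eta$ small gives $\widetilde E(Q)\ge\kappa_*|\log\tilde\varepsilon|-M_1$, absorbing the $O(\eta)$ losses and bounded constants into $M_1$. The principal technical obstacle is calibrating the ball construction in Step~2 so that the final radii satisfy the collective logarithmic estimate $\sum_j\lambda(\gamma_j)^2\log(R_j/r_j)\ge 4\pi\kappa_*|\log\tilde\varepsilon|-C$; this requires the subadditivity property \eqref{lambda * <} to control how homotopy classes combine when two growing balls touch, and is the standard technical heart of Ginzburg--Landau-type vortex lower bounds adapted here to a non-abelian target $\NN$.
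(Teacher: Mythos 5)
The central gap is in the interaction between your Step 1 and Step 3. Once you replace $f^*$ by $\mu_1(1-\abs{Q})^2$ and rescale $\tilde\varepsilon=\varepsilon/\sqrt{\mu_1}$, the zero set of the reduced potential is the whole unit sphere of $\Sz$ (a four-dimensional sphere), which is simply connected; the boundary datum $g$, non-trivial in $\NN$, is contractible there. Consequently the reduced inequality $\widetilde E(Q)\geq\kappa_*\abs{\log\tilde\varepsilon}-M_1$ you propose to prove is false: extend $g$ to a smooth map $\Omega\to\{\abs{Q}=1\}$ of bounded Dirichlet energy and multiply by a cut-off vanishing at one point on the scale $\tilde\varepsilon$; this competitor satisfies all three conditions in \eqref{hp lower LG} and has $\widetilde E\leq C$. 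So the term $\frac{\kappa_*}{2}\log\mu_1$ cannot be produced by the rescaling alone: the topological obstruction lives in $\NN$, not in $\{\abs{Q}=1\}$, and the reduced functional does not see it.

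You then try to reinstate the $\NN$-topology in Steps 2--3 by taking the core to be $F_\eta=\{\dist(Q,\NN)\geq\eta\}$ and invoking the full bound of Lemma~\ref{lemma: f below LG}, but the coercivity constant $c(\eta)$ obtained this way is not of order $\mu_1$: a maximally biaxial tensor of unit norm lies at a fixed positive distance from $\NN$ while $f^*\leq 2\sigma$ there by \eqref{f below LG 2}. Hence $\abs{F_\eta}$ is only controlled by $C\varepsilon^2E_\varepsilon(Q)/\sigma$, the balls of your construction may have total radius of order $\varepsilon\sqrt{\abs{\log\varepsilon}/\sigma}\gg\tilde\varepsilon$, and the annular estimate of Step 3 then yields at best $\kappa_*\abs{\log\varepsilon}+\frac{\kappa_*}{2}\log\sigma-C\log\abs{\log\varepsilon}$: the constant inside the logarithm is $\sigma$, not $\mu_1$, which is exactly the wrong quantity in the regime $ac/b^2$ large (where $\sigma\ll\mu_1$) for which the proposition is needed in Theorem~\ref{thm: intro-biaxial}. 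Note also that the hypothesis $\min_{\overline\Omega}\abs{Q}=0$ plays no quantitative role in your scheme, whereas it drives the paper's argument, which is of a different nature: no ball construction and no projection $\pi$ are used there. The paper slices the energy by the level sets $\omega_t=\{\abs{Q}<t\}$ via the coarea formula, bounds the potential term on each level curve $\Gamma_t$ by $\mu_1(1-t)^2$ combined with $\mathcal H^1(\Gamma_t)\gtrsim\mathrm{rad}(\omega_t)$ and H\"older, bounds the angular energy $\Theta(t)$ from below by $-\kappa_*\log\mathrm{rad}(\omega_t)-C$ \`a la Sandier, and then optimizes over $\mathrm{rad}(\omega_t)$ at every level $t\in(0,1)$ (all levels occur precisely because $\abs{Q}$ vanishes somewhere); it is this levelwise optimization that places $\varepsilon/\sqrt{\mu_1}$ inside the logarithm. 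Your route, even carried out carefully, proves a strictly weaker bound and does not give the stated inequality.
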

\proof
For $t > 0$, set
\[
 \Omega_t := \left\{x\in\Omega\colon \abs{Q(x)} > t\right\}  , \qquad \omega_t := \left\{x\in\Omega\colon \abs{Q(x)} < t\right\}  , \qquad
 \Gamma_t := \partial\Omega_t \setminus \partial\Omega = \partial \omega_t
\]
\[
 \Theta(t) := \int_{\Omega_t} \abs{\nabla \left( \frac{Q}{\abs{Q}}\right)}^2  , \qquad \nu(t) := \int_{\Gamma_t} \abs{\nabla\abs{Q}} \, \d\mathcal H^1  .
\]
Given a set $K \subset\subset\R^2$, let us define the radius of $K$ as
\[
 \textrm{rad}(K) := \inf\left\{ \sum_{i = 1}^n r_i \colon K \subseteq \bigcup_{i = 1}^n B(a_i, \, r_i)\right\}   .
\]

Having set these notations, we are ready to face the proof. It is well-known that $\nabla Q = 0$ a.e. in $\{Q = 0\}$, so we can write
\[
  \int_\Omega \abs{\nabla Q}^2 = \int_{\{\abs{Q} > 0\}} \abs{\nabla Q}^2 = \lim_{t\to 0^+} \int_{\Omega_t} \abs{\nabla Q}^2  ,
\]
by the monotone convergence theorem. This implies
\[
 \int_\Omega \abs{\nabla Q}^2 = \lim_{t\to 0^+} \int_{\Omega_t} \left\{ \abs{\nabla \abs{Q}}^2 + \abs{Q}^2 \abs{\nabla \left(\frac{Q}{\abs{Q}} \right)}^2\right\}
\]
and, applying the coarea formula, we deduce
\begin{equation} \label{lower LG 1}
 E_\varepsilon(Q) = \frac12 \int_0^1 \left\{ \int_{\Gamma_t}\left( \abs{\nabla \abs{Q}} + \frac{2 f^*(Q)}{\varepsilon^2 \abs{\nabla \abs{Q}}}\right) \d\mathcal H^1
 - 2t^2 \Theta'(t) \right\} \, \d t  .
\end{equation}
There is no trouble in dividing by $\abs{\nabla\abs{Q}}$ here. Indeed, combining \eqref{hp lower LG} and the Sard lemma we see that for a.e. $t\in (0, \, 1)$ $\Gamma_t$ is a non-empty, smooth curve in $\Omega$, and that $\abs{\nabla\abs{Q}} > 0$ on $\Gamma_t$. Of course, this implies $\nu(t) > 0$ for a.e. $t$ as well.

Let us estimate the terms in the right-hand side of \eqref{lower LG 1}, starting from the second one. Taking advantage of Lemma~\ref{lemma: f below LG} and of the H\"older inequality,
we obtain
\begin{equation} \label{lower LG 3}
\int_{\Gamma_t} \frac{2 f^*(Q)}{\varepsilon^2 \abs{\nabla \abs{Q}}} \geq \frac{2\mu_1(1 - t)^2}{\varepsilon^2} \int_{\Gamma_t} \frac{1}{\abs{\nabla \abs{Q}}} \, \d\mathcal H^1 
\geq \frac{2\mu_1(1 - t)^2 \mathcal H^1(\Gamma_t)^2}{\varepsilon^2 \nu(t)}  .
\end{equation}
Moreover, we have
\[
 \mathcal H^1(\Gamma_t) \geq 2 \textrm{diam}(\Gamma_t) \geq 4\textrm{rad}(\omega_t) \, ;
\]
to prove the latter inequality, remark that if $x, \, y\in \Gamma_t$ are such that $\abs{x - y} = \textrm{diam}(\Gamma_t)$, then $\omega_t$ is contained in the ball 
$B((x + y)/2, \, \abs{x - y}/2)$ and hence $\textrm{rad}(\omega_t) \leq \textrm{diam}(\Gamma_t)/2$. Combining this result with \eqref{lower LG 1} and \eqref{lower LG 3}, we find
\begin{equation} \label{lower GL 2}
\begin{aligned}
 E_\varepsilon(Q) &\geq \frac12 \int_0^1 \left\{ \nu(t) + \frac{32\mu_1(1 - t)^2 \textrm{rad}(\omega_t)^2}{\varepsilon^2 \nu(t)} \right\} \, \d t - \int_0^1 t^2 \Theta'(t) \, \d t \\
                  &\geq \int_0^1 \frac{4\sqrt 2}{\varepsilon} \mu_1^{1/2} \, (1 - t) \, \textrm{rad}(\omega_t) \, \d t - \int_0^1 t^2 \Theta'(t) \, \d t  ,
\end{aligned}
\end{equation}
where we have applied the inequality $A^2 + B^2 \geq 2AB$. Now, we pay attention to the last term, and we integrate it by parts. For all $\eta > 0$, we have
\[
 - \int_\eta^1 t^2 \Theta'(t) \, \d t = 2 \int_\eta^1 t \Theta(t) \, \d t + \eta^2 \Theta(\eta) \geq \int_\eta^1 t \Theta(t) \, \d t
\]
and, in the limit as $\eta \to 0$, by monotone convergence ($\Theta \geq 0$, $-\Theta'\geq 0$) we conclude
\[
 - \int_0^1 t^2 \Theta'(t) \, \d t \geq  2 \int_0^1 t \Theta(t) \, \d t  .
\]
Arguing as in \cite[Theorem 1]{Sand}, we can establish the bound
\[
 \Theta(t) \geq -\kappa_* \log\left(\textrm{rad}(\omega_t)\right) - C  ,
\]
where $C$ depends only on $\NN$ and the boundary datum (for further details, the reader might see~\cite{Sand} and the proof of Lemma~\ref{lemma: sand} in the sequel). Equation~\eqref{lower GL 2} implies
\[
 E_\varepsilon(Q) \geq  \int_0^1 \left\{ \frac{4\sqrt{2\mu_1}}{\varepsilon}  \, (1 - t) \, \textrm{rad}(\omega_t) - 2\kappa_*t \log\left(\textrm{rad}(\omega_t)\right) \right\} \, \d t - C  .
\]
Minimizing the function $r\in (0, \, +\infty)\mapsto 4\sqrt{2\mu_1}\varepsilon^{-1} (1 - t) r - 2\kappa_*t \log r$, we obtain the lower bound
\[
 E_\varepsilon(Q) \geq  \int_0^1 \left\{  2\kappa_*t -  2\kappa_*t \log\frac{\varepsilon \kappa_* t}{2\sqrt{2 \mu_1} (1 - t)} \right\} \, \d t - C  ,
\]
which implies
\[
  E_\varepsilon(Q) \geq  -2\kappa_* \int_0^1 \left\{ t \log\varepsilon - \frac t2 \log\mu_1 - t \log\frac{\kappa_* t}{2\sqrt{2} (1 - t)} \right\} \, \d t - C  .
\]
As $t \mapsto t \log\frac{\kappa_* t}{2\sqrt{2} (1 - t)}$ is integrable on $[0, \, 1]$, we can conclude the proof.
\endproof

\subsection{Proof of Theorem~\ref{thm: intro-biaxial}}

We are now in position to prove our main theorem about biaxiality. For $\varepsilon > 0$ small enough, we will construct a maximally biaxial function $\Pe\in H^1(\Omega, \, \Sz)$, which satisfies the estimate
\begin{equation} \label{biaxial 1}
 E_\varepsilon(\Pe) \leq \kappa_* \abs{\log\varepsilon} + \frac{\kappa_*}{2}\log\sigma + M_2
\end{equation}
for a constant $M_2$ independent of $\varepsilon$, $a$, $b$ and $c$. We claim that the Theorem follows from \eqref{biaxial 1}. Indeed, in view of \eqref{mu diverges}, we can find a number
$t_0 > 0$ such that
\[
 \frac{\kappa_*}{2} \log \frac{\mu_1}{a}(t) > \frac{\kappa_*}{2} \log \frac{\sigma}{a}(t) + M_1 + M_2
\]
holds, whenever $t \geq t_0$. This implies
\[
 \frac{\kappa_*}{2} \log\mu_1 - M_1 > \frac{\kappa_*}{2} \log\sigma + M_2
\]
and, combining this inequality with Proposition~\ref{prop: lower LG} and \eqref{biaxial 1}, we obtain that
\[
 E_\varepsilon(\Pe) < \inf_Q E_\varepsilon(Q)  ,
\]
the infimum being taken over functions $Q\in C^1(\Omega, \, \Sz)$ satisfying the conditions \eqref{hp lower LG}. In particular, the minimizers (which are smooth functions, by elliptic regularity) cannot satisfy~\eqref{hp lower LG}. Therefore, it must be
\[
 \min_{\overline Q} \abs{\Qe} > 0
\]
and, by applying Lemma~\ref{lemma: retract}, $\Qe$ is maximally biaxial. Thus, Theorem~\ref{thm: intro-biaxial} will be proved once the comparison function in \eqref{biaxial 1} is constructed.

At first, we will assume that the domain is a disk and the boundary datum of a special form. Next, we will extend our construction to the general setting.

\medskip
\emph{The case of a disk.} We assume that $\Omega$ is a disk in $\R^2$, of radius $R$, and that the boundary datum $c$ is given by \eqref{bordo geodesico}. 
Recall that, by Corollary~\ref{cor: bordo geodesico}, $c$ has minimal length, among all the non homotopically trivial loops in $\NN$, and that $\kappa_* = 3\pi/4$. Adopting polar coordinates on $\Omega$, for $(\rho, \, \theta)\in (0, \, R)\times[0, \, 2\pi]$ we set
\begin{equation} \label{biaxial 2}
 \Pe(\rho, \, \theta) := \sqrt{\frac32} \left\{ n_0(\theta)^{\otimes 2} - \frac13 \Id + r_\varepsilon (\rho)\left(m_0(\theta)^{\otimes 2} - \frac13 \Id\right)\right\}  ,                                                                                                                                                   
\end{equation}
where $m_0(\theta) := \left[-\sin(\theta/2), \, \cos(\theta/2), \, 0\right]^T$ and $r_\varepsilon$ is the continuous function
\[
 r_\varepsilon(\rho) := \begin{cases}
                        1 - \sigma^{1/2}\rho/\varepsilon  & \textrm{if } 0 \leq \rho \leq \sigma^{-1/2}\varepsilon \\
                        0                                 & \textrm{if } \sigma^{-1/2}\varepsilon \leq \rho \leq R  .\\
                       \end{cases}
\]
This construction is possible, provided that $\varepsilon < R\sigma^{1/2}$. Due to $r_\varepsilon(0) = 1$, function $\Pe$ can be extended continuously in the origin:
\[
 \lim_{\rho\to 0^+} \Pe(\rho, \, \theta) = - \sqrt{\frac32} \left\{ p_0^{\otimes 2} - \frac13 \Id \right\}  ,
\]
where $p_0 := \left[0, \, 0, \, 1\right]^T$. The derivatives of $\Pe$ can be computed explicitly: 
\[
 \abs{\frac{\partial\Pe}{\partial\rho}(\rho, \, \theta)}^2 = r_\varepsilon'^2(\rho)  , \qquad 
 \abs{\frac{\partial\Pe}{\partial\theta}(\rho, \, \theta)}^2 = \frac34 \left(1 - r_\varepsilon(\rho) \right)^2  ,
\]
and
\[
 \frac12\int_{\Omega} \abs{\nabla \Pe}^2 = \pi \int_0^R \left\{ \rho r_\varepsilon'^2(\rho) + \frac{3}{4\rho} \left(1 - r_\varepsilon(\rho) \right)^2 \right\} \d\rho 
 = \frac34 \pi \abs{\log\varepsilon} + \frac38 \pi \log\sigma + C .
\]
On the other hand, we exploit the upper estimate in Lemma~\ref{lemma: f below LG} to bound the potential energy:
\[
 \frac{1}{\varepsilon^2}\int_{\Omega} f^*(\Pe) \leq \frac{2\sigma}{\varepsilon^2} \textrm{meas}(B(0, \, \sigma^{-1/2}\varepsilon)) = 2\pi  .
\]
Therefore, we conclude that \eqref{biaxial 1} holds, in this case. 

\medskip
\emph{A general domain.} Now, consider an arbitrary domain $\Omega$, and fix a closed disk $B \subset\subset \Omega$. Since, by assumption, the boundary datum $g$ is non homotopically trivial, in view of Lemma~\ref{lemma: extension} it is possible to construct a smooth map $P\colon \Omega\setminus B \to \NN$, such that
\[
 \left.P\right|_{\partial\Omega} = g  \qquad \textrm{and} \qquad \left.P\right|_{\partial B} = c  .
\]
We define the map $\Pe\in H^1(\Omega, \, \Sz)$ by $\Pe(x) = P(x)$ if $x\in \Omega\setminus B$, and by the formula \eqref{biaxial 2} if $x\in B$.
The energy of $\Pe$, out of the ball $B$, is independent of $\varepsilon$ and $\sigma$, whereas the conclusion of Step 1 provides a bound for the energy on $B$.
Hence, \eqref{biaxial 1} follows.

\begin{remark}
Since this argument does not provide an explicit lower bound for $\abs{\Qe}$, we cannot infer that singularity profiles are bounded away from zero (for the definition of singularity profiles, see the Introduction).
\end{remark}

\section{Asymptotic analysis of the minimizers}
\label{sect: convergence}

This section investigates the behavior of minimizers of \eqref{energy} as $\varepsilon\searrow 0$, and contains the proofs of Proposition~\ref{prop: intro-defects} and Theorem~\ref{th: intro-convergence}. We start by recalling some well-known properties of minimizers.

\begin{lemma} \label{lemma: L infty}
If $\ue $ is a minimizer for Problem \eqref{energy}, then 
\[
\norm{\ue }_{L^\infty(\Omega)} \leq 1 \quad \textrm{and} \quad \norm{\nabla\ue }_{L^\infty(\Omega)} \leq \frac{C}{\varepsilon}  .
\]
\end{lemma}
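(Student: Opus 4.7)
The statement has two parts, and I would attack them separately.

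For the $L^\infty$ bound, the natural idea is a radial truncation that uses hypothesis \ref{hp: growth} decisively. Define
\[
 \tilde u_\varepsilon(x) := \begin{cases} u_\varepsilon(x) & \textrm{if } \abs{u_\varepsilon(x)} \leq 1, \\ u_\varepsilon(x)/\abs{u_\varepsilon(x)} & \textrm{if } \abs{u_\varepsilon(x)} > 1. \end{cases}
\]
By \ref{hp: f}, $\NN$ is contained in the closed unit ball, so $\tilde u_\varepsilon = g$ on $\partial \Omega$ and $\tilde u_\varepsilon$ is an admissible competitor in $H^1_g(\Omega, \R^d)$. The nearest-point projection onto the unit ball, $v \mapsto v/\max(1, \abs{v})$, is $1$-Lipschitz, hence $\abs{\nabla \tilde u_\varepsilon} \leq \abs{\nabla u_\varepsilon}$ a.e.; on the set $A := \{\abs{u_\varepsilon} > 1\}$, assumption \ref{hp: growth} gives the strict pointwise inequality $f(u_\varepsilon) > f(\tilde u_\varepsilon)$. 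If $A$ had positive measure, $E_\varepsilon(\tilde u_\varepsilon) < E_\varepsilon(u_\varepsilon)$, contradicting minimality. Hence $\abs{u_\varepsilon} \leq 1$ a.e., and by elliptic regularity applied to \eqref{EL}, everywhere on $\overline\Omega$.

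For the gradient bound, I would rescale. Given an interior point $x_0$ with $\dist(x_0, \partial\Omega) \geq 2\varepsilon$, set $v(y) := u_\varepsilon(x_0 + \varepsilon y)$ on $B(0, 2)$. Then $v$ satisfies
\[
 -\Delta v + Df(v) = 0 \qquad \textrm{in } B(0, 2),
\]
with $\norm{v}_{L^\infty} \leq 1$ by the previous step. Since $f \in C^{2, 1}$, $\norm{Df(v)}_{L^\infty(B(0, 2))}$ is bounded by a constant depending only on $f$. Interior $W^{2, p}$ estimates (for some $p > 2$) followed by Morrey's embedding give $\norm{\nabla v}_{L^\infty(B(0, 1))} \leq C$ independently of $x_0$ and $\varepsilon$. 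Undoing the scaling yields $\abs{\nabla u_\varepsilon(x_0)} \leq C/\varepsilon$.

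To reach the boundary, I would repeat the rescaling near a point $x_0 \in \partial\Omega$, flattening $\partial\Omega$ locally by a diffeomorphism whose $C^k$-norms are controlled by the geometry of $\Omega$ alone. On the $\varepsilon$-scale the transformed boundary becomes uniformly smooth, and the rescaled Dirichlet datum $x \mapsto g(x_0 + \varepsilon x)$ is uniformly bounded in every $C^k$ by \ref{hp: g}. Boundary $W^{2, p}$ (or Schauder) estimates then produce the same bound $\norm{\nabla v}_{L^\infty} \leq C$ in the half-ball, and rescaling concludes.

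The main obstacle is really the bookkeeping at the boundary: making sure that all constants are independent of $x_0$ and $\varepsilon$, i.e.\ that the $C^\infty$ norms of $g$ and the curvatures of $\partial\Omega$ enter in the estimate only through $\varepsilon$-independent quantities. Everything else is quite standard: the truncation step is immediate once \ref{hp: growth} is available, and the gradient step is a textbook rescaling combined with elliptic regularity for a semilinear equation with bounded right-hand side.
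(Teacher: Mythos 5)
Your proof is correct. The $L^\infty$ part is exactly the paper's argument: truncation onto the closed unit ball (admissible since $\NN$ lies in the ball and $g$ takes values in $\NN$), with \ref{hp: growth} providing the strict decrease of the potential; your measure-theoretic formulation of the strictness is in fact slightly cleaner than the paper's pointwise one. For the gradient bound the paper does not argue by hand: it writes $-\Delta \ue = -\varepsilon^{-2}Df(\ue)$, notes that the right-hand side is $O(\varepsilon^{-2})$ thanks to the $L^\infty$ bound, and invokes the interpolation-type estimate of \cite[Lemma A.2]{BBH2}. You instead carry out the underlying blow-up at scale $\varepsilon$ directly, with interior $W^{2,p}$ plus Morrey estimates and a boundary flattening in which the rescaled datum $y\mapsto g(x_0+\varepsilon y)$ is uniformly smooth by \ref{hp: g}. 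The two routes are equivalent in substance: the citation is shorter, while your version is self-contained and makes explicit the boundary bookkeeping (uniformity of constants in $x_0$ and $\varepsilon$) that the quoted lemma conceals.
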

\proof
The $L^\infty$ bound on $\ue $ can be easily established via a comparison argument. Assume, by contradiction, that $\abs{\ue(x_0)} > 1$ for some $x_0\in\Omega$, and define
\[
v_\varepsilon(x) := \begin{cases}
\ue(x) & \textrm{if } \abs{\ue(x)}\leq 1 \\ 
\displaystyle\frac{\ue(x)}{\abs{\ue(x)}} & \textrm{otherwise}  .
\end{cases}
\]
Clearly $\abs{\nabla v_\varepsilon}\leq \abs{\nabla\ue}$ and, by \ref{hp: growth}, $f(v_\varepsilon) \leq f(\ue)$, with strict equality at least at the point $x_0$. Thus, we infer
$E_\varepsilon(v_\varepsilon) < E_\varepsilon(\ue)$, which contradicts the minimality of $\ue$. The estimate on the gradient can be deduced from Equation \eqref{EL}, with the help of the previous bound and \cite[Lemma A.2]{BBH2}.
\endproof

\begin{lemma}[Pohozaev identity] \label{lemma: pohoz}
Let $G\subseteq \R^2$ be any subdomain of $\Omega$, and $x_0\in G$. Denote by $\nu$ the unit external normal to $\partial G$ and by $\tau$ the unit tangent to $\partial G$, oriented so that 
$(\tau, \, \nu)$ is direct. Then, any solution $\ue$ of Equation \eqref{EL} satisfies
\begin{equation}
\begin{aligned}
 \frac{1}{\varepsilon^2}\int_G f(\ue )  &+ \frac12 \int_{\partial G} (x - x_0)\cdot\nu \abs{\frac{\partial\ue }{\partial\nu}}^2 \d\mathcal H^1 \\ 
&= \int_{\partial G} \left\{\frac12 (x - x_0)\cdot \nu \abs{\frac{\partial \ue }{\partial\tau}}^2 - (x - x_0)\cdot\tau \frac{\partial\ue }{\partial\nu}:\frac{\partial \ue }{\partial\tau} +
(x - x_0)\cdot\nu\frac{1}{2\varepsilon^2}f(\ue )\right\} \d\mathcal H^1  .
\end{aligned}
\label{pohoz}
\end{equation}
\end{lemma}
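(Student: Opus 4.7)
The plan is a standard Pohozaev computation: take the Euclidean inner product in $\R^d$ of the Euler--Lagrange equation \eqref{EL} with the vector field $\phi := (x - x_0)\cdot\nabla \ue = \sum_k (x_k - x_{0,k}) \partial_k \ue$ (the directional derivative of $\ue$ along $x - x_0$), integrate over $G$, and re-express every resulting term as an interior integral plus boundary integrals via the divergence theorem.

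For the nonlinear term, the chain rule gives $Df(\ue)\cdot \phi = (x-x_0)\cdot\nabla(f(\ue))$, so a single integration by parts, using $\operatorname{div}(x-x_0) = 2$ in $\R^2$, yields an interior contribution proportional to $\int_G f(\ue)$ and a boundary contribution with density $((x-x_0)\cdot\nu)\, f(\ue)$.

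For the principal part, I would rewrite
\[
 -\Delta \ue \cdot \phi = -\sum_j \partial_j(\partial_j \ue \cdot \phi) + \sum_j \partial_j \ue \cdot \partial_j \phi ;
\]
using $\partial_j \phi = \partial_j \ue + \sum_k (x-x_0)_k \partial_j\partial_k \ue$, the symmetric cross term simplifies to $|\nabla \ue|^2 + (x-x_0)\cdot\nabla(|\nabla \ue|^2/2)$, and another application of the divergence theorem recasts the sum as $\operatorname{div}\bigl((x-x_0)\, |\nabla \ue|^2/2\bigr)$. After integrating over $G$, the whole $-\Delta \ue \cdot \phi$ contribution reduces to the two boundary integrals $-\int_{\partial G} \partial_\nu \ue \cdot \phi \, \d\mathcal H^1$ and $\tfrac12 \int_{\partial G} (x-x_0)\cdot\nu \,|\nabla \ue|^2\, \d\mathcal H^1$.

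To match the form of \eqref{pohoz}, I then decompose $\nabla \ue = \nu\otimes \partial_\nu \ue + \tau\otimes \partial_\tau \ue$ and $(x - x_0) = ((x-x_0)\cdot\nu)\nu + ((x-x_0)\cdot\tau)\tau$ on $\partial G$, obtaining $\phi = ((x-x_0)\cdot\nu)\partial_\nu \ue + ((x-x_0)\cdot\tau)\partial_\tau \ue$ and $|\nabla \ue|^2 = |\partial_\nu \ue|^2 + |\partial_\tau \ue|^2$; substituting and collecting the normal/tangential terms produces the stated identity. The argument presents no serious obstacle --- it is purely algebraic, and smoothness of $\ue$ (valid by elliptic regularity for \eqref{EL}) makes every pointwise manipulation legitimate. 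Dimension two enters only through $\operatorname{div}(x-x_0) = 2$; the same scheme works in any dimension with the obvious adjustment of prefactors.
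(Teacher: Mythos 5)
Your method coincides with the paper's: the paper proves this lemma by a one-line reference to \cite[Theorem III.2]{BBH}, and that theorem is established by exactly the multiplier argument you describe (pair \eqref{EL} with $(x-x_0)\cdot\nabla\ue$, integrate over $G$, use the divergence theorem with $\operatorname{div}(x-x_0)=2$, then decompose $\nabla\ue$ into normal and tangential parts on $\partial G$). Every individual step you list is legitimate, and smoothness of $\ue$ (elliptic regularity) justifies the pointwise manipulations.

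However, your closing claim that collecting terms ``produces the stated identity'' does not quite hold with the constants as printed. Carrying your computation to the end gives
\[
\frac{2}{\varepsilon^2}\int_G f(\ue) + \frac12\int_{\partial G}(x-x_0)\cdot\nu\abs{\frac{\partial\ue}{\partial\nu}}^2\,\d\mathcal H^1
= \int_{\partial G}\left\{\frac12(x-x_0)\cdot\nu\abs{\frac{\partial\ue}{\partial\tau}}^2 - (x-x_0)\cdot\tau\,\frac{\partial\ue}{\partial\nu}:\frac{\partial\ue}{\partial\tau} + \frac{1}{\varepsilon^2}(x-x_0)\cdot\nu\, f(\ue)\right\}\d\mathcal H^1 ,
\]
because the potential term contributes the interior coefficient $2/\varepsilon^2$ (from $\operatorname{div}(x-x_0)=2$) and the boundary coefficient $1/\varepsilon^2$. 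This is consistent with \cite[Theorem III.2]{BBH}: there $f=\frac14(1-\abs{u}^2)^2$, and the interior term $\frac{1}{2\varepsilon^2}\int_G(1-\abs{u}^2)^2$ equals $\frac{2}{\varepsilon^2}\int_G f$, not $\frac{1}{\varepsilon^2}\int_G f$. As printed, \eqref{pohoz} halves only the two $f$-terms while keeping the gradient terms, which is not what the computation (or a direct test on an explicit solution) yields; this appears to be a transcription slip in the statement rather than a flaw in your argument, but you should either record the identity in the form displayed above or halve all the boundary gradient terms as well, rather than assert that your algebra lands exactly on \eqref{pohoz}.
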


\proof
The lemma can be proved arguing exactly as in \cite[Theorem III.2]{BBH}.
\endproof

\begin{remark}
When considering the Landau-de Gennes equation \eqref{LG EL}, the additional term $b(\tr\Qe^2)\Id/3$ does not play any role in the proof of the Pohozaev identity. 
Indeed, assume~$x_0 = 0$, multiply both sides of \eqref{LG EL} by $x_k Q_{ij, \, k}$, sum over $i, \, j, \, k$ and integrate over $G$. We obtain
\[
-\int_G Q_{\varepsilon, \, ij, \, ll} \, x_k Q_{\varepsilon, \, ij, \, k} \ \d x + 
\frac{1}{2\varepsilon^2}\int_G \frac{\partial f(\Qe)}{\partial Q_{\varepsilon, \, ij}} \, Q_{\varepsilon, \, ij, \, k} \, x_k \ \d x + 
\frac13 b \int_G x_k Q_{\varepsilon, \, ii, \, k} \tr \Qe^2 \ \d x = 0  ,
\]
and the third integral vanishes, since $Q_{\varepsilon, \, ii} = 0$. The proof follows exactly as in the previous case; the reader is referred to \cite[Lemma 2]{MZ} for more details.
\end{remark}

\begin{lemma} \label{lemma: energy estimate}
Let $\Omega \subseteq \R^2$ and let $\ue$ be a minimizer for Problem \eqref{energy}. Then, there exists a constant $C$, depending on $\Omega$ and $G$, such that 
\[
E_\varepsilon(\ue) \leq \kappa_*\abs{\log\varepsilon} + C  .
\]
\end{lemma}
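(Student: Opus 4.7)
The strategy is the classical one for Ginzburg--Landau type energies: produce an explicit competitor whose energy is at most $\kappa_*\abs{\log\varepsilon}+O(1)$, then invoke the minimality of $u_\varepsilon$. The construction is essentially a multi-vortex configuration, matching the boundary datum through the extension result of Lemma~\ref{lemma: extension}.

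\medskip
\textbf{Step 1: Selection of the optimal decomposition.} Since the infimum in \eqref{kappa *} is attained, I can pick classes $\eta_1,\ldots,\eta_m\in\Gamma(\NN)$ with
\[
\prod_{j=1}^{m}\eta_j\cap\prod_{i=1}^{k}\gamma_i\neq\emptyset\qquad\text{and}\qquad\frac{1}{4\pi}\sum_{j=1}^{m}\lambda(\eta_j)^2=\kappa_*.
\]
For each $j$, let $c_j\in\eta_j\cap H^1(S^1,\NN)$ be a minimizing geodesic, so that $|c_j'|$ is constant and $\int_{S^1}|c_j'|^2\,\d\theta=\lambda(\eta_j)^2/(2\pi)$.

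\medskip
\textbf{Step 2: Construction of the competitor.} Fix $m$ distinct points $a_1,\ldots,a_m\in\Omega$ and a radius $\rho>0$ such that the closed disks $\overline{B(a_j,\rho)}$ are mutually disjoint and contained in $\Omega$. For $\varepsilon<\rho$ I define $v_\varepsilon\colon\overline\Omega\to\R^d$ piecewise:
\begin{itemize}
\item On each annulus $B(a_j,\rho)\setminus B(a_j,\varepsilon)$, set $v_\varepsilon(a_j+re^{i\theta}):=c_j(\theta)$.
\item On each small ball $B(a_j,\varepsilon)$, interpolate: e.g.\ $v_\varepsilon(a_j+re^{i\theta}):=\chi(r/\varepsilon)\,c_j(\theta)+(1-\chi(r/\varepsilon))\,c_j(0)$, with $\chi$ a smooth cutoff equal to $1$ near $1$ and vanishing near $0$. (A small correction using the radial retraction near $\NN$ does not change the scaling.)
\item On $\Omega_0:=\Omega\setminus\bigcup_j\overline{B(a_j,\rho)}$, apply Lemma~\ref{lemma: extension} to obtain a smooth $\NN$-valued map matching $g$ on $\partial\Omega$ and $c_j$ on each $\partial B(a_j,\rho)$.
\end{itemize}
The hypotheses of Lemma~\ref{lemma: extension} are verified precisely by the choice made in Step~1: for a suitable partition of the boundary components of $\Omega_0$, the intersection of the two products of free homotopy classes is non-empty (this is where the condition $\prod_j\eta_j\cap\prod_i\gamma_i\neq\emptyset$ enters).

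\medskip
\textbf{Step 3: Energy budget.} A direct computation in polar coordinates gives, on the $j$-th annulus,
\[
\frac12\int_{B(a_j,\rho)\setminus B(a_j,\varepsilon)}|\nabla v_\varepsilon|^2
=\frac12\int_{\varepsilon}^{\rho}\frac{\d r}{r}\int_0^{2\pi}|c_j'(\theta)|^2\,\d\theta
=\frac{\lambda(\eta_j)^2}{4\pi}\log\frac{\rho}{\varepsilon},
\]
and $f(v_\varepsilon)=0$ there since $v_\varepsilon$ takes values in $\NN$. On each inner disk $B(a_j,\varepsilon)$, the scaling $|\nabla v_\varepsilon|\leq C/\varepsilon$ and $f(v_\varepsilon)\leq C$ yield a contribution $O(1)$. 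On $\Omega_0$ the extension from Lemma~\ref{lemma: extension} is smooth, $\NN$-valued and $\varepsilon$-independent, so it contributes $O(1)$ as well. Summing,
\[
E_\varepsilon(v_\varepsilon)\leq\sum_{j=1}^m\frac{\lambda(\eta_j)^2}{4\pi}\log\frac{\rho}{\varepsilon}+C=\kappa_*\abs{\log\varepsilon}+C.
\]

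\medskip
\textbf{Step 4: Conclusion.} Since $v_\varepsilon\in H^1_g(\Omega,\R^d)$ and $u_\varepsilon$ is a minimizer, $E_\varepsilon(u_\varepsilon)\leq E_\varepsilon(v_\varepsilon)\leq\kappa_*\abs{\log\varepsilon}+C$.

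\medskip
The only genuinely delicate point is the outer extension in Step~2: one must verify that the traces on the various $\partial B(a_j,\rho)$ and on $\partial\Omega$ satisfy the compatibility condition of Lemma~\ref{lemma: extension}. This is exactly what the choice of $(\eta_j)$ with $\prod_j\eta_j\cap\prod_i\gamma_i\neq\emptyset$ achieves, after one decomposes $\Omega_0$ inductively along arcs so as to reduce to the bipartition situation of that lemma. The remaining computations (inner disk contribution and annulus integral) are routine.
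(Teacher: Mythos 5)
Your proposal is correct and follows essentially the same route as the paper: pick an optimal decomposition $(\eta_j)$ achieving the minimum in \eqref{kappa *}, use Lemma~\ref{lemma: extension} to build an $\varepsilon$-independent $\NN$-valued extension outside disjoint small disks, place a minimal-length loop $c_j(\theta)$ on each annulus (yielding $\frac{\lambda(\eta_j)^2}{4\pi}\abs{\log\varepsilon}+O(1)$), fill the $\varepsilon$-core with an interpolation of energy $O(1)$, and conclude by minimality. The only (immaterial) difference is the core filling: the paper scales radially to $0$ via $\varepsilon^{-1}\rho\, b_j(\theta)$, while you cut off to the constant $c_j(0)$; both give the same $O(1)$ contribution.
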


\proof
The proof of this lemma is a slightly different version of \cite[Theorem III.1]{BBH} (see also \cite{Ch}).
Let $ (\eta_1, \, \eta_2, \, \ldots, \, \eta_m)\in \Gamma(\NN)^m$ be an $m$-uple which achieve the minimum in \eqref{kappa *}, that is,
\begin{equation} \label{quasi min}
 \prod_{j = 1}^m \eta_j \cap \prod_{i = 1}^k\gamma_i \neq \emptyset  , \qquad  \frac{1}{4\pi}\sum_{j = 1}^m \lambda(\eta_j)^2 = \kappa_*  ,
\end{equation}
and for each $j \in \{ 1, \, \ldots, \, m\}$ choose a loop $b_j\in\eta_j$, of minimal length (i.e., $\lambda(\eta_j) = 2\pi |b'_j|$). 
Let $B_1, \, \ldots, \, B_m$ be mutually disjoint, closed disks in $\Omega$, of radius $r$. Applying Lemma~\ref{lemma: extension}, we find a smooth function $v\colon \Omega\setminus\cup_{j = 1}^m B_j \to \NN$, such that $v = g$ on $\partial \Omega$ and $v = b_j$ on $\partial B_j$, for each $j$.

We extend $v$ to a function $v_\varepsilon\colon \overline \Omega \to \NN$ in the following way. On $\Omega\setminus\cup_{j = 1}^m B_j$, set $v = v_\varepsilon$, whereas 
on each ball $B_j$, denoting by $(\rho, \, \theta)$ the polar coordinates around the center $x_j$ of the ball, set
\[
v_\varepsilon(x) := \begin{cases}
\varepsilon^{-1}\rho \, b_j \left(x_j + r e^{i\theta}\right) & \textrm{if } 0 < \rho < \varepsilon \\
                        b_j \left(x_j + r e^{i\theta}\right) & \textrm{if } \varepsilon \leq \rho < r  .
\end{cases}
\]
Since $v_\varepsilon\in H^1_g(\Omega, \, \R^k)$, the minimality of $\ue$ entails $E_\varepsilon(\ue)\leq E_\varepsilon(v_\varepsilon)$.
Computing $E_\varepsilon(v_\varepsilon)$ will be enough to conclude \eqref{kappa *}. We find
\[
E_\varepsilon(v_\varepsilon; \, \Omega\setminus \bigcup_{j = 1}^m B_j) = \frac12 \int_{\Omega\setminus \bigcup_{j = 1}^m B_j}\abs{\nabla v}^2 = C  ,
\]
\[
E_\varepsilon(v_\varepsilon; \, B_j) \leq \int_{B_j} C\varepsilon^{-2} \leq C  ,
\]
and, passing to polar coordinates,
\[
E_\varepsilon(v_\varepsilon;  B_j \setminus B_\varepsilon(x_j)) = \frac12 \int_\varepsilon^r \frac{\d\rho}{\rho} \int_{S^1} d\omega \, \abs{b_j'(\omega)}^2 \leq 
\frac{1}{4\pi} \lambda(\eta_j)^2 \abs{\log\varepsilon} + C  .
\]
Combining these bounds, with the help of \eqref{quasi min} we conclude.
\endproof

\subsection{Localizing the singularities}
\label{subsect: localizing singularities}

In this subsection, we will prove Proposition~\ref{prop: intro-defects}. Namely, we will show that the image of $\ue$ lies close to the vacuum manifold, except on the union of a finite number of small balls. Analogous results have been established for the Ginzburg-Landau model in \cite{BBH}, in case the domain $\Omega\subseteq\R^2$ is star-shaped. This technical assumption has been removed in \cite{BR} and \cite{Str} (see also \cite{Cetraro} for more details).

We introduce a (small) parameter $0 < \alpha \leq 1$, whose value is going to be adjusted later, and we set
\[
 \ee(\ue) = \frac12 \abs{\nabla\ue}^2 + \frac{1}{\varepsilon^2} f(\ue)  .
\]
We claim the following

\begin{prop} \label{prop: palle}
Let $\delta\in (0, \, \delta_0)$ be fixed. For all $\varepsilon > 0$, there exists a finite set $X_\varepsilon = \left\{x_1, \, \ldots, \, x_k\right\} \subset \Omega$, whose cardinality is bounded independently of $\varepsilon$, such that
\begin{equation} \label{palle}
\dist(\ue(x), \, \NN)\leq \delta \qquad \textrm{if } \dist(x, \, X_\varepsilon) > \lambda_0 \varepsilon  ,
\end{equation}
where $\lambda_0 > 0$ is a constant independent of $\varepsilon$, and
\begin{equation} \label{palle bis}
\varepsilon^{4\alpha} \ee(\ue) (x) \leq C_\alpha \qquad \textrm{if } \dist(x, \, X_\varepsilon) > \varepsilon^\alpha  .
\end{equation}
\end{prop}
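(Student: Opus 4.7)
\emph{Covering~\eqref{palle}.} First, combining the Lipschitz estimate $\norm{\nabla\ue}_{L^\infty}\leq C/\varepsilon$ of Lemma~\ref{lemma: L infty} with the quadratic lower bound $f(u)\geq(m_0/2)\dist^2(u,\NN)$ from Remark~\ref{remark: f}, I would establish a small-energy dichotomy: if $\dist(\ue(x_0),\NN)>\delta$, then $\dist(\ue,\NN)>\delta/2$ throughout $B(x_0,c_\delta\varepsilon)$ with $c_\delta\sim\delta$, whence $\varepsilon^{-2}\int_{B(x_0,c_\delta\varepsilon)\cap\Omega} f(\ue) \geq \eta_0(\delta) > 0$. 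A Vitali extraction applied to $S_\varepsilon := \{\dist(\ue,\NN)>\delta\}$ then produces disjoint balls $\{B(y_i,c_\delta\varepsilon)\}_{i=1}^{N_\varepsilon}$ whose doubles cover $S_\varepsilon$, and Lemma~\ref{lemma: energy estimate} gives $N_\varepsilon\eta_0 \leq \kappa_*\abs{\log\varepsilon}+C$, so $N_\varepsilon = O(\abs{\log\varepsilon})$.

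\emph{Consolidation to bounded cardinality -- the main obstacle.} The hardest part is reducing $N_\varepsilon = O(\abs{\log\varepsilon})$ to a cardinality bounded uniformly in $\varepsilon$ while the enlarged radius stays of order $\varepsilon$. I would implement a Sandier--Jerrard style ball-growth and merging: simultaneously grow all radii, merging any pair whose closures touch, terminating when the common radius reaches $\lambda_0\varepsilon$. The quantitative heart of the argument is a ball-growth lower bound of the form $E_\varepsilon(\ue;B) \geq c_*\log(\mathrm{rad}(B)/\varepsilon) - C$ inside any grown ball $B$ surrounding a topologically non-trivial piece of $S_\varepsilon$, with $c_* > 0$ essentially the minimum of $\lambda_*$ over non-trivial classes in $\Gamma(\NN)$, which is positive thanks to assumption~\ref{hp: N}. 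Combined with the energy bound $\kappa_*\abs{\log\varepsilon}+C$, this caps the number of surviving clusters by a constant depending only on $g$ and $\NN$; taking $X_\varepsilon$ to be the centers of the final balls establishes~\eqref{palle}.

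\emph{Pointwise energy estimate.} For~\eqref{palle bis}, fix $x_0$ with $\dist(x_0,X_\varepsilon) > \varepsilon^\alpha$, set $\rho := \varepsilon^\alpha$, and rescale $\tilde u(y) := \ue(x_0 + \rho y)$ on $B(0, 1)$: the function $\tilde u$ solves $-\Delta\tilde u + \tilde\varepsilon^{-2} Df(\tilde u) = 0$ with $\tilde\varepsilon := \varepsilon^{1-\alpha}$, and by~\eqref{palle} we have $\dist(\tilde u,\NN) \leq \delta$ throughout $B(0, 1/2)$. Using the nearest-point projection of Subsection~\ref{subsect: geometric lemma}, decompose $\tilde u = v + (\tilde u - v)$ with $v := \pi(\tilde u)$; by Remark~\ref{remark: f} the normal component $\sigma := \dist(\tilde u,\NN)$ satisfies a Helmholtz-type differential inequality $-\Delta\sigma + (m_0/\tilde\varepsilon^2)\sigma \leq C\abs{\nabla v}^2$, so a maximum-principle comparison yields $\sigma(0) = O(\exp(-c/\tilde\varepsilon))$ and in particular $\tilde\varepsilon^{-2} f(\tilde u)(0) \leq C$. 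Standard elliptic regularity applied to $v$ -- whose defining equation is quasi-harmonic with bounded coefficients, thanks to Lemma~\ref{lemma: npp} -- similarly furnishes $\abs{\nabla\tilde u(0)} \leq C$. Translating back via $e_\varepsilon(\ue)(x_0) = \rho^{-2} e_{\tilde\varepsilon}(\tilde u)(0) \leq C\rho^{-2} = C\varepsilon^{-2\alpha}$ yields $\varepsilon^{4\alpha} e_\varepsilon(\ue)(x_0) \leq C\varepsilon^{2\alpha} \leq C_\alpha$, which is~\eqref{palle bis}.
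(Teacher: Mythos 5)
Your first step (clearing-out dichotomy plus a Vitali extraction giving $O(\abs{\log\varepsilon})$ bad balls of radius $\sim\varepsilon$) is sound and matches the paper's Proposition~\ref{prop: clearing out}. The gap is in the consolidation step. The Sandier--Jerrard growth-and-merging bound $E_\varepsilon(\ue;B)\geq c_*\log(\mathrm{rad}(B)/\varepsilon)-C$ is only available for clusters around which $\pi(\ue)$ is homotopically non-trivial; a connected piece of $\{\dist(\ue,\NN)>\delta\}$ may perfectly well be surrounded by loops on which $\ue$ is $\delta$-close to $\NN$ with trivial class, and for those clusters your argument yields no logarithmic cost at all, so their number remains $O(\abs{\log\varepsilon})$ --- yet they must still be swallowed by $X_\varepsilon$, since $\dist(\ue,\NN)>\delta$ there. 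Moreover, growing only to radius $\lambda_0\varepsilon$ gains $\log(\lambda_0\varepsilon/\varepsilon)=O(1)$ from such a bound, so it cannot reduce the count; growing to a mesoscale to gain a genuine $\log$ leaves you with large balls, and relocating the bad set at scale $\varepsilon$ inside each of them is again the original problem. What closes this in the paper is a non-topological ingredient you never invoke: the Pohozaev identity (Lemma~\ref{lemma: pohoz}), which gives the \emph{local, $\varepsilon$-independent} potential-energy bound $\varepsilon^{-2}\int_{B(x_0,\varepsilon^\alpha)\cap\Omega}f(\ue)\leq C_\alpha$ (Lemma~\ref{lemma: local estimate}). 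Combined with clearing out, this bounds by a constant the number of $\lambda_0\varepsilon$-balls with potential energy $\geq\mu_0$ inside each mesoscale ball, while the global bound of Lemma~\ref{lemma: energy estimate} bounds the number of mesoscale balls carrying gradient energy $\geq\eta_\alpha\abs{\log\varepsilon}$; it is this two-scale count, not topology, that makes $\mathrm{card}(X_\varepsilon)$ bounded for trivial and non-trivial clusters alike.

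The argument for \eqref{palle bis} also does not go through as written. Rescaling at the fixed radius $\rho=\varepsilon^\alpha$ gives no smallness: the energy of $\tilde u$ on $B(0,1)$ is a priori $O(\abs{\log\varepsilon})$, and ``standard elliptic regularity'' does not provide interior gradient bounds for the harmonic-map-type system satisfied by $v=\pi(\tilde u)$ (quadratic gradient nonlinearity, critical in dimension two) without a small-energy hypothesis. The comparison estimate for $\sigma$ is circular for the same reason: the source $C\abs{\nabla v}^2$ is precisely what is not yet controlled, and the universal bound $\abs{\nabla\tilde u}\leq C/\tilde\varepsilon$ only yields $\sigma(0)\lesssim 1$, not $O(e^{-c/\tilde\varepsilon})$. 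The paper's route needs the mesoscale hypothesis \eqref{concentration}, an averaging choice of a radius $r\in(\varepsilon^{2\alpha},\varepsilon^\alpha)$ making the boundary energy small, the \emph{minimality} of $\ue$ through the projected comparison map of Lemma~\ref{lemma: comparison} to convert boundary smallness into interior energy smallness, and then the Bochner inequality of Lemma~\ref{lemma: bochner} together with the Chen--Struwe small-energy estimate; note also that the natural output is $\varepsilon^{4\alpha}\ee(\ue)(x_0)\leq C_\alpha$ (the good radius may be as small as $\varepsilon^{2\alpha}$), so your stronger claimed decay $C\varepsilon^{2\alpha}$ is a further sign the estimates were too optimistic.
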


Proposition~\ref{prop: palle} clearly implies Proposition~\ref{prop: intro-defects}.
As a first step in the proof, we show that $\ee(\ue)$ solves an elliptic inequality, in the regions where $\ue$ lies close to the vacuum manifold.

\begin{lemma} \label{lemma: bochner}
 Assume $\omega \subset \Omega$ is an open set, such that $\dist(\ue(x), \, \NN) \leq \delta$ holds for all $x\in\omega$ and all $\varepsilon > 0$. Then, $\ee(\ue)$ solves pointwise in $\omega$ the inequality
 \[
 - \Delta \ee(\ue) \leq C \ee^2 (\ue)  .
 \]
\end{lemma}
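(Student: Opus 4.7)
My strategy is to compute $-\Delta\ee(\ue)$ pointwise via a Bochner-type identity, and then to exploit the degeneracy of $D^2 f$ along tangent directions to $\NN$ in order to absorb the sign-indefinite term into the non-positive term $|Df(\ue)|^2/\varepsilon^4$ via Young's inequality.

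A direct computation using the Euler--Lagrange equation $\Delta\ue = \varepsilon^{-2}Df(\ue)$, together with the identities $\Delta(\tfrac12|\nabla u|^2) = |D^2 u|^2 + \nabla u : \nabla\Delta u$ and $\Delta f(u) = Df(u)\cdot\Delta u + D^2 f(u)(\nabla u,\nabla u)$, gives the pointwise formula
\[
-\Delta\ee(\ue) \;=\; -|D^2\ue|^2 \;-\; \frac{2}{\varepsilon^2}\,D^2 f(\ue)(\nabla\ue,\nabla\ue) \;-\; \frac{1}{\varepsilon^4}|Df(\ue)|^2 .
\]
The first and third terms are non-positive; only the middle term has indefinite sign.

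To control the middle term I use the structure of $D^2 f$ close to $\NN$. Every $v\in\NN$ minimizes $f$, so $Df(v)=0$ and $D^2 f(v)\geq 0$; differentiating $Df|_\NN\equiv 0$ along any tangent vector $T\in T_v\NN$ gives $D^2 f(v)T=0$, i.e.\ $T_v\NN\subseteq\ker D^2 f(v)$. Hence $D^2 f(v)(X,X) = D^2 f(v)(X^N,X^N)\geq 0$ for every $X\in\R^d$, where $X^N$ denotes the orthogonal projection of $X$ onto $T_v^\perp\NN$. The smoothness of $f$ and of the nearest-point projection $\pi$ (well-defined on the $\delta_0$-tubular neighborhood of $\NN$) together with the Taylor-type bound $|D^2 f(u) - D^2 f(\pi(u))|\leq C\,\dist(u,\NN)$ then yield, in $\omega$,
\[
-D^2 f(\ue)(\nabla\ue,\nabla\ue) \;\leq\; C_1\,\dist(\ue,\NN)\,|\nabla\ue|^2 .
\]
For the remaining term, \ref{hp: df} together with Cauchy--Schwarz applied to the lower bound $Df(\ue)\cdot(\ue-\pi(\ue))\geq m_0\,\dist^2(\ue,\NN)$ (cf.\ Remark~\ref{remark: f}) gives $|Df(\ue)|^2\geq m_0^2\,\dist^2(\ue,\NN)$.

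Inserting these two estimates into the identity above and discarding $-|D^2\ue|^2\leq 0$,
\[
-\Delta\ee(\ue) \;\leq\; \frac{2C_1\,\dist(\ue,\NN)\,|\nabla\ue|^2}{\varepsilon^2} \;-\; \frac{m_0^2\,\dist^2(\ue,\NN)}{\varepsilon^4} .
\]
A single Young inequality $2C_1\,\xi\eta \leq (C_1^2/m_0^2)\,\xi^2 + m_0^2\,\eta^2$ with $\xi=|\nabla\ue|^2$ and $\eta=\dist(\ue,\NN)/\varepsilon^2$ exactly cancels the negative contribution and leaves $-\Delta\ee \leq (C_1^2/m_0^2)\,|\nabla\ue|^4 \leq (4C_1^2/m_0^2)\,\ee(\ue)^2$, as desired. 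The main obstacle is the middle step: a crude bound $|D^2 f|\leq C$ would only yield $-\Delta\ee \leq C|\nabla\ue|^2/\varepsilon^2$, which in regions where $\ee\ll\varepsilon^{-2}$ is much weaker than $C\,\ee^2$. The fact that $D^2 f$ annihilates tangent directions to $\NN$, producing the extra factor $\dist(\ue,\NN)$, is exactly what makes the Young absorption possible.
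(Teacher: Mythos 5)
Your proof is correct and takes essentially the same route as the paper: the same Bochner-type identity for $-\Delta\ee(\ue)$, the lower bound $\abs{Df(\ue)}^2 \geq m_0^2\dist^2(\ue,\NN)$ from \ref{hp: df}, the combination of $D^2f(\pi(\ue))\geq 0$ with the Lipschitz continuity of $D^2 f$ to gain the factor $\dist(\ue,\NN)$ in the mixed term, and a Young absorption of that term into $\varepsilon^{-4}\abs{Df(\ue)}^2$, leaving $C\abs{\nabla\ue}^4 \leq C\ee^2(\ue)$. The only (harmless) difference is that your observation $T_v\NN\subseteq\ker D^2f(v)$ is never actually used --- positive semidefiniteness of $D^2f$ on $\NN$ plus the Lipschitz bound already suffice, so your closing attribution of the gain to the tangential degeneracy is slightly misplaced, but the chain of inequalities is valid.
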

\proof
Reminding that $\ue$ is a solution of Equation \eqref{EL}, we compute plainly

\begin{align*}
-\frac12 \Delta \abs{\nabla \ue}^2 &= - \nabla \left( \Delta\ue\right) \cdot \nabla\ue - \abs{\nabla^2\ue}^2 
= - \frac{1}{\varepsilon^2} \nabla\ue : D^2 f(\ue) \nabla\ue - \abs{\nabla^2\ue}^2  ,
\end{align*}
where $\abs{\nabla^2\ue}^2 = \sum_{i, \, j}\abs{\partial_i\partial_j \ue}^2$, and
\begin{align*}
-\frac{1}{\varepsilon^2} \Delta f(\ue) &= - \frac{1}{\varepsilon^2} \nabla \left(Df(\ue) \right) \cdot \nabla\ue -  \frac{1}{\varepsilon^2} Df(\ue)\cdot \Delta \ue \\
&= - \frac{1}{\varepsilon^2} \nabla\ue : D^2 f(\ue) \nabla\ue - \frac{1}{\varepsilon^4} \abs{Df(\ue)}^2  .
\end{align*}

Adding these contributions, we obtain
\begin{equation} \label{boch 1}
 -\Delta\ee(\ue) + \abs{\nabla^2\ue}^2 + \frac{1}{\varepsilon^4} \abs{Df(\ue)}^2 = - \frac{2}{\varepsilon^2} \nabla\ue : D^2 f(\ue) \nabla\ue  .
\end{equation}
Hypothesis \ref{hp: df} provides
\begin{equation} \label{boch 2}
 \frac{1}{\varepsilon^4}\abs{D f(\ue)}^2 \geq \frac{m_0}{\varepsilon^4} \dist^2(\ue, \, \NN)  .
\end{equation}
Moreover, the image $\ue(\omega)$ lies close to $\NN$ by assumption, so the right-hand side of \eqref{boch 1} can be estimated by the local Lipschitz continuity of $D^2f$:
\begin{align*}
 - \frac{2}{\varepsilon^2} \nabla\ue : D^2 f(\ue) \nabla\ue &\leq - \frac{2}{\varepsilon^2} \nabla\ue : D^2 f(\pi(\ue)) \nabla\ue + \frac{2}{\varepsilon^2} \abs{D^2f(\ue) - D^2f(\pi(\ue))} \abs{\nabla \ue}^2 \\
 &\leq - \frac{2}{\varepsilon^2} \nabla\ue : D^2 f(\pi(\ue)) \nabla\ue + \frac{C}{\varepsilon^2}\dist(\ue, \, \NN) \abs{\nabla \ue}^2 \\
 & \leq \frac{C}{\varepsilon^2}\dist(\ue, \, \NN) \abs{\nabla \ue}^2  .
 \end{align*}
For the latter inequality, remind that every point $p\in \NN$ is a minimizer for $f$, so $D^2 f(p) \geq 0$. We infer 
\[
 - \frac{2}{\varepsilon^2} \nabla\ue : D^2 f(\ue) \nabla\ue \leq \frac{m_0}{\varepsilon^4} \dist^2(\ue, \, \NN) + C \abs{\nabla\ue}^4
\]
and the first term can be reabsorbed in the left-hand side of \eqref{boch 1}, by means of \eqref{boch 2}. This concludes the proof.
\endproof

Our next ingredient is a Clearing Out lemma, which relies crucially on \ref{hp: df}.

\begin{prop}[Clearing Out] \label{prop: clearing out}
There exist some positive constants $\lambda_0$ and $\mu_0$ with the following property: for all $x_0\in\Omega$ and all $l\in[\lambda_0\varepsilon, \, 1]$, if the minimizer $\ue$ satisfies
\begin{equation}
\frac{1}{\varepsilon^2}\int_{B(x_0, \, 2l)\cap\Omega} f(u_\varepsilon) \leq \mu_0
\label{small f}
\end{equation}
then
\begin{equation}
\dist(u_\varepsilon(x), \, \NN) \leq \delta\qquad \textrm{for all } x\in \Omega\cap B(x_0, \, l)  .
\label{small dist}
\end{equation}
\end{prop}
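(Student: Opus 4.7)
The plan is to argue by contraposition: assume that there exists $x_1 \in \Omega \cap B(x_0, l)$ with $\dist(\ue(x_1), \NN) > \delta$, and derive a positive $\varepsilon$-uniform lower bound on $\varepsilon^{-2}\int_{B(x_0, 2l) \cap \Omega} f(\ue)$, which will be our $\mu_0$. The mechanism is entirely local around $x_1$: I will use the gradient bound from Lemma~\ref{lemma: L infty} to transfer the pointwise defect at $x_1$ to a whole ball of radius $\sim \varepsilon\delta$, on which \ref{hp: df} will force $f$ to be bounded below by a positive constant.

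First, since $\ue = g$ with $g(\partial\Omega) \subset \NN$, the Lipschitz bound $\norm{\nabla \ue}_{L^\infty} \leq C/\varepsilon$ yields
\[
 \dist(\ue(x_1), \, \NN) \leq \abs{\ue(x_1) - g(x_1')} \leq \frac{C}{\varepsilon} \dist(x_1, \, \partial\Omega),
\]
where $x_1' \in \partial\Omega$ is a nearest point. Hence $\dist(\ue(x_1), \NN) > \delta$ forces $\dist(x_1, \partial\Omega) > c_1 \varepsilon \delta$ for a suitable $c_1 > 0$; in particular $B(x_1, c_1 \varepsilon \delta) \subset \Omega$. Applying the gradient bound again, but now starting from $x_1$, gives $\dist(\ue(y), \NN) \geq \delta/2$ for every $y$ in $B(x_1, c_2 \varepsilon \delta)$, provided $c_2 \leq c_1$ is small enough.

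Second, I will exploit \ref{hp: df}. By Remark~\ref{remark: f}, whenever $\dist(v, \NN) \in [\delta/2, \delta_0]$ one has $f(v) \geq \frac{m_0}{2}\dist^2(v, \NN) \geq m_0 \delta^2/8$; on the complementary region $\{v \in \R^d : \abs{v}\leq 1, \, \dist(v, \NN) \geq \delta_0\}$, which is compact and disjoint from $\NN = f^{-1}(0)$, $f$ is bounded below by a positive constant. Combined with $\norm{\ue}_{L^\infty} \leq 1$, this gives $f(\ue(y)) \geq c_\delta > 0$ for all $y\in B(x_1, c_2 \varepsilon \delta)$.

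Finally, setting $\lambda_0 := c_2 \delta$, the hypothesis $l \geq \lambda_0 \varepsilon$ together with $x_1 \in B(x_0, l)$ ensures $B(x_1, c_2 \varepsilon \delta) \subset B(x_0, 2l) \cap \Omega$, so
\[
 \frac{1}{\varepsilon^2} \int_{B(x_0, 2l)\cap\Omega} f(\ue) \geq c_\delta \cdot \pi (c_2 \delta)^2 =: 2\mu_0,
\]
contradicting \eqref{small f} if $\mu_0$ is chosen as above. The only delicate point is Step~1, where the boundary condition $g \in \NN$ is used essentially to keep the ball where we are integrating inside $\Omega$; without it, a defect point could sit within distance $O(\varepsilon)$ of $\partial\Omega$ and the mass of $f$ it carries would leak out of the domain. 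Everything else is a straightforward consequence of the non-degeneracy assumption \ref{hp: df} and the standard $L^\infty$ bounds.
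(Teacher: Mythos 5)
Your proposal is correct and follows essentially the same route as the paper: a contradiction argument using the Lipschitz bound $\norm{\nabla\ue}_{L^\infty}\leq C/\varepsilon$ first to push the defect point away from $\partial\Omega$ (via the boundary condition $g\in\NN$) and then to propagate $\dist(\ue,\NN)\geq\delta/2$ to a ball of radius $\sim\lambda_0\varepsilon$, on which $f$ is bounded below by $\min$ of the nondegeneracy bound from \ref{hp: df} and the positive minimum of $f$ on the compact set $\{\abs{v}\leq 1,\ \dist(v,\NN)\geq\delta_0\}$, yielding the same choice $\lambda_0\sim\delta/C$ and $\mu_0\sim\lambda_0^2\min\{f_0,\,m_0\delta^2/8\}$ as in the paper.
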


\proof Set
\[
 f_0 := \min\left\{ f(v) \colon \dist(v, \, \NN) \geq \delta  , \: \abs{v} \leq 1 \right\}  ,
\]
and remark that $f_0 > 0$, because it is the minimum of a strictly positive function on a compact set. We define
\begin{equation}
\lambda_0 := \frac{\delta}{2C}  , \qquad \mu_0 := \frac\pi 2 \lambda_0^2\min\left\{ f_0, \, \frac18 m_0\delta^2 \right\} ,
\label{lambda_0}
\end{equation}
where $C$ is a constant such that $\abs{\nabla\ue}\leq C\varepsilon^{-1}$ (such a constant exists, by Lemma~\ref{lemma: L infty}).
We are going to check that this choice of $\lambda_0, \, \mu_0$ works. To do so, we proceed by contradiction and assume there is some point $x\in B(x_0, \, l)$ such that
$\dist(\ue(x), \, \NN) > \delta$. Firstly, we remark that this assumption implies $\dist(x, \, \partial\Omega) > \lambda_0\varepsilon$. Indeed, if it were $\dist(x, \, \partial\Omega) \leq 
\lambda_0\varepsilon$ then, in view of~\ref{hp: g}, we would have
\[
\dist(\ue(x), \, \NN) \leq \norm{\nabla\ue}_{L^\infty} \dist(x, \, \partial\Omega) \leq C\lambda_0 = \frac{\delta}{2}  .
\]
It follows that the ball $B(x, \, \lambda_0\varepsilon)$ is entirely contained in $\Omega\cap B(x_0, \, 2l)$. In addition, for all $y\in B(x, \, \lambda_0\varepsilon)$ we have
\[
\dist(\ue(y), \, \NN) \geq \dist(\ue(x), \, \NN) - \abs{\ue(x) - \ue(y)} > \delta - \lambda_0 \varepsilon  \norm{\nabla\ue}_{L^\infty} \geq \frac{\delta}{2}  .
\]
Due to Remark~\ref{remark: f} and Lemma~\ref{lemma: L infty}, this implies
\[
\frac{1}{\varepsilon^2}\int_{\Omega\cap B(x_0, \, 2l)} f(\ue) \geq \frac{1}{\varepsilon^2}\int_{B(x, \, \lambda_0\varepsilon)} f(\ue) \geq 
\pi \lambda_0^2 \min\left\{f_0, \, \frac18 m_0\delta^2\right\} = 2 \mu_0  ,
\]
which contradicts the hypothesis \eqref{small f}.
\endproof

The two following results can be found in \cite[Section IV.5]{Cetraro}. The proofs carry over to our setting, without any change.

\begin{lemma} \label{lemma: local estimate}
Let $x_0\in \Omega$. There exists a constant $C_\alpha$, depending only on $\alpha$, $g$ and $\Omega$, such that 
\[
\frac{1}{\varepsilon^2} \int_{B(x_0, \, \varepsilon^\alpha)\cap\Omega} f(\ue) \leq C_\alpha   .
\]
\end{lemma}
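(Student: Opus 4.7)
The plan is to combine the Pohozaev identity (Lemma~\ref{lemma: pohoz}) on concentric balls with the global logarithmic energy bound (Lemma~\ref{lemma: energy estimate}) via a dyadic averaging argument. The key observation is that on a ball $G = B(x_0, r)$, the geometry trivializes the boundary terms in \eqref{pohoz}: since $\nu = (x - x_0)/r$ and $\tau \perp (x - x_0)$, we have $(x - x_0)\cdot\nu = r$ and $(x - x_0)\cdot\tau = 0$ on $\partial B(x_0, r)$. Substituting into \eqref{pohoz} and dropping the favorable term $\tfrac{r}{2}\int_{\partial B}|\partial_\nu \ue|^2$ on the left, one obtains
\[
\frac{1}{\varepsilon^2}\int_{B(x_0, r)} f(\ue) \leq r\int_{\partial B(x_0, r)} \ee(\ue)\,\d\mathcal H^1 .
\]

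Next I would produce a good radius $r^\ast \geq \varepsilon^\alpha$ where the right-hand side is bounded by a constant. Assuming first that $\dist(x_0, \partial\Omega) \geq r_0$ for some fixed $r_0 = r_0(\Omega)$, consider the dyadic annuli $A_k := B(x_0, 2^{k+1}\varepsilon^\alpha) \setminus B(x_0, 2^k\varepsilon^\alpha)$ for $k = 0, 1, \ldots, K-1$, where $K := \lfloor\log_2(r_0/\varepsilon^\alpha)\rfloor \asymp \alpha \abs{\log\varepsilon}$. The annuli are pairwise disjoint subsets of $\Omega$, so Lemma~\ref{lemma: energy estimate} yields
\[
\sum_{k=0}^{K-1}\int_{A_k}\ee(\ue) \leq E_\varepsilon(\ue) \leq \kappa_\ast\abs{\log\varepsilon} + C ,
\]
and the pigeonhole principle provides $k^\ast$ with $\int_{A_{k^\ast}} \ee(\ue) \leq C/\alpha$. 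Applying Fubini in polar coordinates on $A_{k^\ast}$ then yields some $r^\ast \in [2^{k^\ast}\varepsilon^\alpha, 2^{k^\ast+1}\varepsilon^\alpha]$ for which $\int_{\partial B(x_0, r^\ast)} \ee(\ue) \leq (C/\alpha)/(2^{k^\ast}\varepsilon^\alpha)$. Inserting this into the Pohozaev bound gives $\varepsilon^{-2}\int_{B(x_0,r^\ast)} f(\ue) \leq 2C/\alpha =: C_\alpha$, and since $r^\ast \geq \varepsilon^\alpha$ the inclusion $B(x_0,\varepsilon^\alpha) \subset B(x_0,r^\ast)$ concludes the interior case.

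It remains to handle the case where $x_0$ is close to $\partial\Omega$, so that the dyadic balls exit $\Omega$. Here I would apply the Pohozaev identity to $G = B(x_0, r)\cap\Omega$ instead, producing an additional boundary contribution on $\partial\Omega\cap B(x_0, r)$. Since $\ue = g$ takes values in $\NN$ on $\partial\Omega$, the potential $f(\ue)$ vanishes there, and the tangential derivative $\partial_\tau \ue = g^\prime$ is bounded by $\|g\|_{C^1}$; the remaining term involving $\partial_\nu \ue$ on $\partial\Omega$ can be absorbed after a judicious choice of the Pohozaev center $x_0$ (exploiting local star-shapedness of $\Omega$ near boundary points, which is available thanks to the smoothness of $\partial\Omega$).

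The main obstacle is precisely this boundary case: the geometry of $B(x_0,r)\cap\Omega$ is less clean, and one must verify that, after the dyadic decomposition, enough of the good annuli lie inside $\Omega$ and that the extra boundary integrals stay uniformly bounded in $\varepsilon$. A more robust alternative, which avoids the case distinction, is to first establish the stronger \emph{global} estimate $\varepsilon^{-2}\int_\Omega f(\ue) \leq C$ by applying Pohozaev to $\Omega$ itself (after a partition of unity into star-shaped pieces) and controlling the boundary terms via the smoothness of $g$; the local statement of the lemma then follows immediately by monotonicity of the integral. In either approach, the dependence $C_\alpha \sim 1/\alpha$ arises naturally from the pigeonhole step.
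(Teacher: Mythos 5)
Your main route coincides with the proof the paper actually relies on: the paper does not prove Lemma~\ref{lemma: local estimate} in-house but refers to \cite{Cetraro}, and the argument there is precisely your combination of the Pohozaev identity on a ball centred near $x_0$ with the logarithmic bound of Lemma~\ref{lemma: energy estimate}, using an averaging/pigeonhole choice of a good radius $r^*\geq\varepsilon^\alpha$ with $r^*\int_{\partial B(x_0,r^*)\cap\Omega}e_\varepsilon(u_\varepsilon)\,\d\mathcal H^1\leq C/\alpha$. Your interior computation is complete and correct, and your sketch of the boundary case is the standard one: shift the Pohozaev centre to a point $y$ for which $(x-y)\cdot\nu\gtrsim r$ on $\partial\Omega\cap B(x_0,r)$ (possible since $\partial\Omega$ is smooth), use $f(g)=0$ and $\abs{\partial_\tau g}\leq C$ on $\partial\Omega$, absorb the cross term $(x-y)\cdot\tau\,\partial_\nu u_\varepsilon\cdot\partial_\tau g$ by Young's inequality against the good quadratic term $\tfrac12(x-y)\cdot\nu\abs{\partial_\nu u_\varepsilon}^2$, and bound everything on the circular arc by $Cr\int_{\partial B\cap\Omega}e_\varepsilon(u_\varepsilon)$ regardless of sign.

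One caveat: drop the ``more robust alternative''. The Pohozaev identity does not localize through a partition of unity: applied on a star-shaped piece $\Omega_i$, it produces boundary integrals on the internal interfaces $\partial\Omega_i\cap\Omega$ involving $\abs{\nabla u_\varepsilon}^2$ and $\varepsilon^{-2}f(u_\varepsilon)$, for which no $\varepsilon$-independent bound is available at this stage (their total size is a priori only $O(\abs{\log\varepsilon})$, and a defect may sit on an interface), so summing over the pieces cannot yield the global bound $\varepsilon^{-2}\int_\Omega f(u_\varepsilon)\leq C$. This is exactly why the non-star-shaped case in the Ginzburg--Landau literature is treated by the local good-radius argument you gave first; keep that as the proof and present the boundary absorption in full.
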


\begin{prop} \label{prop: concentration} There exists a constant $\eta_\alpha  > 0$, independent of $\varepsilon$, with the following property: if a point $x_0\in\Omega$ verifies
\begin{equation}
\int_{B(x_0, \, 2\varepsilon^\alpha)\cap\Omega} \abs{\nabla \ue}^2 \leq \eta_\alpha \abs{\log\varepsilon} + C
\label{concentration}
\end{equation}
then $\dist(\ue(x), \, \NN) \leq \delta$ for all $x\in B(x_0, \, \varepsilon^\alpha)$.
\end{prop}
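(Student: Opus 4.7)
The idea is to reduce the proposition to the Clearing Out lemma (Proposition~\ref{prop: clearing out}). Fix $x\in B(x_0,\varepsilon^\alpha)$: I will show that there exists a radius $r^{*}=r^{*}(x)\in[2\lambda_0\varepsilon,\varepsilon^\alpha]$ satisfying
\[
 \frac{1}{\varepsilon^2}\int_{B(x,r^{*})\cap\Omega} f(\ue) \leq \mu_0,
\]
so that Proposition~\ref{prop: clearing out} applied with $l=r^{*}/2\geq\lambda_0\varepsilon$ yields $\dist(\ue(x),\NN)\leq\delta$. The bridge between the Dirichlet-energy hypothesis and this potential-energy bound is the Pohozaev identity, combined with an averaging in the radial variable against the measure $\d r/r$, whose total mass on $[2\lambda_0\varepsilon,\varepsilon^\alpha]$ is of order $(1-\alpha)\abs{\log\varepsilon}$.

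\textbf{Pohozaev on concentric balls.} Assume first that $B(x,\varepsilon^\alpha)\subset\Omega$. Applying Lemma~\ref{lemma: pohoz} to $G=B(x,r)$ with base point $x$, one has $(y-x)\cdot\nu=r$ and $(y-x)\cdot\tau=0$ on $\partial B(x,r)$; dropping the non-negative $\abs{\partial_\nu\ue}^2$ term on the left-hand side and using $\abs{\partial_\tau\ue}^2\leq\abs{\nabla\ue}^2$ gives
\[
 G_x(r) \leq \frac{r}{2}\bigl(\Phi_x(r)+G_x'(r)\bigr),
\]
where $G_x(r):=\varepsilon^{-2}\int_{B(x,r)} f(\ue)$ and $\Phi_x(r):=\int_{\partial B(x,r)}\abs{\nabla\ue}^2\,\d\mathcal H^1$.

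\textbf{Logarithmic averaging.} Observe that $\int_{r_1}^{r_2}\Phi_x(r)\,\d r=\int_{B(x,r_2)\setminus B(x,r_1)}\abs{\nabla\ue}^2$ and $\int_{r_1}^{r_2}G_x'(r)\,\d r\leq G_x(r_2)$. Since $B(x,\varepsilon^\alpha)\subset B(x_0,2\varepsilon^\alpha)$, the hypothesis \eqref{concentration} and Lemma~\ref{lemma: local estimate} together yield
\[
 \int_{2\lambda_0\varepsilon}^{\varepsilon^\alpha}\bigl(\Phi_x(r)+G_x'(r)\bigr)\,\d r \leq \eta_\alpha\abs{\log\varepsilon}+C+C_\alpha.
\]
On the other hand, $\int_{2\lambda_0\varepsilon}^{\varepsilon^\alpha}\d r/r=(1-\alpha)\abs{\log\varepsilon}+O(1)$. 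By a mean-value argument with respect to the measure $\d r/r$, some $r^{*}\in[2\lambda_0\varepsilon,\varepsilon^\alpha]$ satisfies
\[
 r^{*}\bigl(\Phi_x(r^{*})+G_x'(r^{*})\bigr) \leq \frac{\eta_\alpha\abs{\log\varepsilon}+C+C_\alpha}{(1-\alpha)\abs{\log\varepsilon}+O(1)},
\]
and the right-hand side is at most $2\eta_\alpha/(1-\alpha)$ for $\varepsilon$ small enough.

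\textbf{Conclusion.} Inserting this into the Pohozaev inequality gives $G_x(r^{*})\leq\eta_\alpha/(1-\alpha)$. Choosing $\eta_\alpha:=(1-\alpha)\mu_0/4$, a constant depending only on $\alpha$, one obtains $G_x(r^{*})\leq\mu_0$, and Proposition~\ref{prop: clearing out} applied with $l=r^{*}/2$ delivers $\dist(\ue(x),\NN)\leq\delta$. The heart of the argument is the choice of the weight $\d r/r$: it is precisely what converts the log-divergent Dirichlet bound \eqref{concentration} into a bounded radial mean, a phenomenon characteristic of the two-dimensional logarithmic energy. The only point that requires separate care is the case $\dist(x,\partial\Omega)<\varepsilon^\alpha$, where $B(x,r)$ must be replaced by $B(x,r)\cap\Omega$ in Pohozaev; the extra boundary contributions are controlled uniformly thanks to the smoothness of the Dirichlet datum $g$ and can be absorbed into the constant $C$.
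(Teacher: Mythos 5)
Your main line of argument --- Pohozaev on concentric balls, averaging in the radius against the measure $\d r/r$, then the Clearing Out lemma --- is precisely the classical proof that the paper itself does not reproduce but delegates to \cite[Section IV.5]{Cetraro}. For interior points the argument is correct as you wrote it: the inequality $G_x(r)\leq \frac r2\left(\Phi_x(r)+G_x'(r)\right)$ follows from Lemma~\ref{lemma: pohoz} with base point $x$, the hypothesis \eqref{concentration} together with Lemma~\ref{lemma: local estimate} bounds $\int_{2\lambda_0\varepsilon}^{\varepsilon^\alpha}(\Phi_x+G_x')\,\d r$, the mean-value argument produces a good radius $r^*\in[2\lambda_0\varepsilon,\varepsilon^\alpha]$, and the choice $\eta_\alpha=(1-\alpha)\mu_0/4$ is admissible since $\mu_0,\lambda_0$ depend only on $\delta$ (the restriction to $\varepsilon$ small enough is implicit throughout the paper). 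Applying Proposition~\ref{prop: clearing out} with centre $x$ and $l=r^*/2\geq\lambda_0\varepsilon$ is legitimate and gives the conclusion at $x$.

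The genuine gap is the boundary case, which you dismiss in one sentence. If $B(x,r)$ meets $\partial\Omega$ and you apply Lemma~\ref{lemma: pohoz} on $G=B(x,r)\cap\Omega$, the portion of $\partial G$ lying on $\partial\Omega$ contributes, among other terms, $-\int_{\partial\Omega\cap B(x,r)}(y-x)\cdot\tau\,\frac{\partial \ue}{\partial\nu}:\frac{\partial g}{\partial\tau}\,\d\mathcal H^1$. This involves the normal derivative of $\ue$ on $\partial\Omega$, which is \emph{not} controlled by the smoothness of $g$: the only a priori bound available at this stage is $\abs{\nabla\ue}\leq C\varepsilon^{-1}$ (Lemma~\ref{lemma: L infty}), which yields a contribution of order $r^2/\varepsilon\sim\varepsilon^{2\alpha-1}$, divergent and not absorbable into the constant $C$. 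It cannot be absorbed by Young's inequality into the left-hand term $\frac12\int_{\partial\Omega\cap B}(y-x)\cdot\nu\,\abs{\partial_\nu\ue}^2$ either, because there the weight $(y-x)\cdot\nu\approx\dist(x,\partial\Omega)$ may be much smaller than $\abs{(y-x)\cdot\tau}\approx r$. This is exactly the difficulty that forced \cite{BBH} to assume $\Omega$ star-shaped (so that a global Pohozaev identity first gives $\int_{\partial\Omega}\abs{\partial_\nu\ue}^2\leq C$), and whose removal required the separate arguments of \cite{BR} and \cite{Str} --- for instance a boundary version of the small-energy estimate, or a comparison construction near $\partial\Omega$ exploiting that $g$ is $\NN$-valued, in the spirit of Lemma~\ref{lemma: comparison} and of the boundary case in the proof of Proposition~\ref{prop: chen struwe}. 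Your proof needs such an additional argument for points $x$ with $\dist(x,\partial\Omega)<\varepsilon^\alpha$; as written, the claim that the extra boundary contributions are ``controlled uniformly thanks to the smoothness of $g$'' is unjustified.
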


Proposition~\ref{prop: concentration} provides a concentration result for the energy, which will be crucial in our argument.
Reducing, if necessary, the value of $\eta_\alpha$, we are able to show another estimate for minimizers satisfying \eqref{concentration}. This will be the final ingredient in our proof of Proposition~\ref{prop: palle}.

\begin{prop} \label{prop: chen struwe}
There exist constants $\eta_\alpha, \, C_\alpha > 0$ (with $C_\alpha$ depnding only on $\alpha, \, \eta_\alpha$) such that, if $\ue$ verifies the condition \eqref{concentration} for some $x_0\in\Omega$, then
\[
 \varepsilon^{4\alpha} \ee(\ue)(x_0) \leq C_\alpha  .
\]
\end{prop}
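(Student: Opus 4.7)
Once $\eta_\alpha$ is chosen smaller than the constant provided by Proposition~\ref{prop: concentration}, one has $\dist(\ue(x), \, \NN) \leq \delta_0$ for every $x \in B(x_0, \, \varepsilon^\alpha)$, and Lemma~\ref{lemma: bochner} then yields the Bochner-type inequality
\[
 -\Delta \ee(\ue) \leq C \, \ee(\ue)^2 \qquad \textrm{in } B(x_0, \, \varepsilon^\alpha).
\]
This nonlinear differential inequality, combined with the $L^1$ bound
\[
 \int_{B(x_0, \, 2\varepsilon^\alpha)} \ee(\ue) \leq C\bigl(\eta_\alpha \abs{\log\varepsilon} + 1\bigr),
\]
produced by the hypothesis~\eqref{concentration} together with Lemma~\ref{lemma: local estimate}, is what the proof will exploit.

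\textbf{Point selection and blow-up.} I would follow the classical Schoen--Struwe point-selection trick. Define
\[
 \phi(x) := \bigl(\varepsilon^\alpha - \abs{x - x_0}\bigr)^2 \, \ee(\ue)(x) \qquad \textrm{for } x \in \overline{B(x_0, \, \varepsilon^\alpha)} \, ;
\]
being continuous, non-negative, and vanishing on the boundary, $\phi$ attains its maximum at some interior point $x_1$. Set $r_1 := \varepsilon^\alpha - \abs{x_1 - x_0}$ and $e^{(0)} := \ee(\ue)(x_1)$. The maximality of $\phi(x_1)$ gives $\ee(\ue) \leq 4 e^{(0)}$ on $B(x_1, \, r_1/2)$, and the crucial inequality $\phi(x_0) = \varepsilon^{2\alpha} \ee(\ue)(x_0) \leq \phi(x_1) = r_1^2 e^{(0)}$. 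After the rescaling $v(z) := \ue(x_1 + z/\sqrt{e^{(0)}})$, $\tilde\varepsilon := \varepsilon \sqrt{e^{(0)}}$, $\rho := r_1 \sqrt{e^{(0)}}/2$, the map $v$ solves the Euler--Lagrange equation at scale $\tilde\varepsilon$ on $B(0, \, \rho)$, and its energy density $\tilde e(z) := (e^{(0)})^{-1} \ee(\ue)(x_1 + z/\sqrt{e^{(0)}})$ satisfies $\tilde e(0) = 1$, $\tilde e \leq 4$, and $-\Delta \tilde e \leq 16\, C$ throughout $B(0, \, \rho)$.

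\textbf{Subharmonic concentration.} Since $\Delta(4C\abs{z}^2) = 16 C$ in the plane, $\tilde e + 4C\abs{z}^2$ is subharmonic. Fixing $\lambda_1 > 0$ so small that $2C\lambda_1^2 \leq 1/2$, the sub-mean value inequality on $B(0, \, \lambda_1)$ yields
\[
 \int_{B(0, \, \lambda_1)} \tilde e \geq \frac{\pi \lambda_1^2}{2} =: c_1 \qquad \textrm{whenever } \rho \geq \lambda_1 \, ,
\]
which, translated back to the original variables, reads $\int_{B(x_1, \, \lambda_1/\sqrt{e^{(0)}})} \ee(\ue) \geq c_1$. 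In the easy alternative $\rho < \lambda_1$ one already has $\phi(x_1) = (2\rho)^2 < 4\lambda_1^2$, and therefore $\varepsilon^{4\alpha} \ee(\ue)(x_0) \leq \varepsilon^{2\alpha} \phi(x_1) \leq 4\lambda_1^2$, which is the conclusion.

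\textbf{Iteration and the main obstacle.} There remains the case $\rho \geq \lambda_1$, which I would treat by iterating the point-selection on dyadic sub-balls so as to produce mutually disjoint concentration balls inside $B(x_0, \, 2\varepsilon^\alpha)$. Each such ball carries an amount $c_1$ of energy; summing these contributions and comparing with the global $L^1$ bound forces the ratio between the outer and the inner scales to obey $\log\rho \leq C\eta_\alpha \abs{\log\varepsilon} + C$. Choosing $\eta_\alpha$ so small that $C\eta_\alpha \leq \alpha$ one obtains $\rho \leq C_\alpha \varepsilon^{-\alpha}$, whence $\phi(x_1) = 4\rho^2 \leq C_\alpha \varepsilon^{-2\alpha}$ and finally $\varepsilon^{4\alpha} \ee(\ue)(x_0) \leq \varepsilon^{2\alpha} \phi(x_1) \leq C_\alpha$. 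The delicate step is precisely this iteration: converting the single concentration of mass $c_1$ on a ball of radius $\lambda_1/\sqrt{e^{(0)}}$ into a genuinely logarithmic lower bound for the energy on $B(x_0, \, 2\varepsilon^\alpha)$ is the nonlinear counterpart of the ``$\log$-cost per vortex'' principle of Ginzburg--Landau, and it requires an accurate dyadic covering in combination with the Bochner inequality, together with a careful calibration of $\eta_\alpha$ so that only $O(\alpha \abs{\log\varepsilon})$ iterations are compatible with the $L^1$ budget.
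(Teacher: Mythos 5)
Your argument is incomplete exactly where you say it is, and the missing step is not merely delicate --- as sketched, it cannot be closed with the tools you allow yourself. In the case $\rho \geq \lambda_1$ you need the lower bound $\log\rho \leq C\eta_\alpha\abs{\log\varepsilon} + C$, i.e.\ an energy cost of a fixed amount \emph{per dyadic scale} between the blow-up scale $1/\sqrt{e^{(0)}}$ and $\varepsilon^\alpha$. But the point-selection/quantization mechanism you invoke only produces concentration balls each carrying a fixed amount $c_1$ of energy, and the $L^1$ budget $C(\eta_\alpha\abs{\log\varepsilon}+1)$ merely bounds their \emph{number}; nothing forces one such ball to appear at every scale, so no logarithmic growth in $\rho$ follows. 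The ``log-cost per vortex'' principle you appeal to is topological and is not available here: on $B(x_0,\varepsilon^\alpha)$ the map is $\delta$-close to $\NN$ and there is no degree-type obstruction at the intermediate scales. Indeed, a small ``bubble'' (a rescaled harmonic map at scale $s \ll \varepsilon^{2\alpha}$) has bounded total energy, satisfies the Bochner inequality $-\Delta \ee \leq C\ee^2$, and has energy density $\sim s^{-2} \gg \varepsilon^{-4\alpha}$ at its center; so the conclusion simply cannot be derived from the Bochner inequality plus the $L^1$ bound alone. What rules this scenario out is the \emph{minimality} of $\ue$, which your proof never uses beyond the Euler--Lagrange equation.

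This is precisely how the paper proceeds. By averaging the logarithmic bound over scales, one finds a radius $r\in(\varepsilon^{2\alpha},\varepsilon^\alpha)$ with
\[
 \int_{\partial B(x_0,\,r)\cap\Omega}\Big\{\abs{\nabla\ue}^2 + \tfrac{1}{\varepsilon^2}f(\ue)\Big\} \leq \frac{2\eta_\alpha}{\alpha r} ;
\]
after rescaling $B(x_0,r)$ to the unit disk, the rescaled map $v_\epsilon$ is still a \emph{minimizer} with its own boundary data, and a comparison map --- the harmonic extension of $\pi(v_\epsilon)\vert_{\partial B}$ corrected by a small normal field so as to match $v_\epsilon$ on $\partial B$ (Lemma~\ref{lemma: comparison}) --- shows that the \emph{interior} energy satisfies $\int_B e_\epsilon(v_\epsilon)\leq C_\alpha\eta_\alpha$. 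This interior smallness (not just the boundary smallness) is what allows the Chen--Struwe small-energy estimate, combined with the Bochner inequality of Lemma~\ref{lemma: bochner}, to give $r^2\,\ee(\ue)(x_0)\leq C_\alpha\eta_\alpha$, whence the claim since $r\geq\varepsilon^{2\alpha}$. If you want to salvage your outline, the fix is to replace the iteration step by this minimality-based comparison on a well-chosen circle; note also that your proposal does not treat the case $B(x_0,\varepsilon^\alpha)\not\subseteq\Omega$, which the paper handles by straightening the boundary and using the Dirichlet version of the small-energy estimate.
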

\proof

We suppose, at first, that $B(x_0, \, \varepsilon^\alpha)\subset \Omega$.
In view of Proposition~\ref{prop: concentration}, we can assume that $\dist(\ue, \, \NN)\leq \delta$ on $B(x_0, \, \varepsilon^{\alpha})$. 
Furthermore, \eqref{concentration} and Lemma~\ref{lemma: local estimate} provide
\begin{equation}
 E_\varepsilon(\ue, \,  B(x_0, \, \varepsilon^\alpha)\cap\Omega) \leq \eta_\alpha \abs{\log\varepsilon} + C_\alpha  .
\label{concentration 1}
\end{equation}

We claim that there exists a radius $r\in (\varepsilon^{2\alpha}, \, \varepsilon^\alpha)$ such that 
\begin{equation}
\int_{\partial B(x_0, \, r)\cap\Omega} \left\{ \abs{\nabla \ue}^2  + \frac{1}{\varepsilon^2}f(\ue) \right\} \leq \frac{2\eta_\alpha}{\alpha r}  .
\label{concentration 2}
\end{equation}
Indeed, if \eqref{concentration 2} were false, integrating over $(\varepsilon^{2\alpha}, \, \varepsilon^\alpha)$ we would obtain
\[
  E_\varepsilon(\ue, \,  B(x_0, \, \varepsilon^\alpha)\cap\Omega) \geq \frac{2\eta_\alpha}{\alpha} \int_{\varepsilon^{2\alpha}}^{\varepsilon^\alpha} \frac{\d r}{r} = 2\eta_\alpha \abs{\log\varepsilon}  ,
\]
which contradicts \eqref{concentration 1}, for $\varepsilon\ll 1$. Thus, the claim is established.

Set
\[
 \epsilon := \varepsilon/r \qquad \textrm{and} \qquad v_\epsilon (x) := \ue\left(\frac{x + x_0}{r}\right) \qquad \textrm{for } x\in B := B(0, \, 1)  .
\]
As a consequence of the scaling, we deduce $e_\epsilon(v_\epsilon) = r^2 \ee(\ue)$, hence
\[
 E_\epsilon(v_\epsilon, \, B) = E_\varepsilon(\ue, \, B(x_0, \, r))
\]
and $v_\epsilon$ minimizes the energy $E_\epsilon$ among the maps $w\in H^1(B, \, \R^d)$, with $\left. w\right|_{\partial B} = \left. v_\epsilon\right|_{\partial B}$.
Moreover, \eqref{concentration 2} transforms into
\begin{equation} \label{stima al bordo} 
 \int_{\partial B} e_\epsilon(v_\epsilon) \leq \frac{2\eta_\alpha}{\alpha}  .
\end{equation}
We will take advantage of the following property, whose proof is postponed.

\begin{lemma} \label{lemma: comparison}
The energy of $v_\epsilon$ is controlled by $\eta_\alpha$, that is,
\[
 \int_B e_\epsilon(v_\epsilon) \leq C_\alpha\eta_\alpha  .
\]
\end{lemma}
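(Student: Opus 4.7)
The plan is to construct a competitor $w \in H^1(B, \R^d)$ with $w = v_\epsilon$ on $\partial B$ and $E_\epsilon(w, B) \leq C_\alpha \eta_\alpha$, and then conclude from the minimality of $v_\epsilon$ on $B$. The key resource is the boundary estimate \eqref{stima al bordo}: the trace of $v_\epsilon$ on $\partial B$ is cheap, with $H^1$-energy of order $\eta_\alpha$, so one can afford to modify $v_\epsilon$ substantially inside $B$ as long as the modification is built entirely out of the boundary data.

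The first step is to produce an $\NN$-valued target. Proposition~\ref{prop: concentration}, applied before the rescaling, gives $\dist(v_\epsilon, \NN) \leq \delta < \delta_0$ on the whole of $B$, so the nearest point projection $\pi$ is well defined on $v_\epsilon(\overline B)$. Set $\tilde v := \pi \circ v_\epsilon|_{\partial B}$. By Lemma~\ref{lemma: npp} one has $\int_{\partial B}|\nabla\tilde v|^2 \leq C\eta_\alpha/\alpha$, and the one-dimensional embedding $H^1(\partial B) \hookrightarrow C^{0,1/2}(\partial B)$ bounds the diameter of $\tilde v(\partial B)$ in $\NN$ by $C\sqrt{\eta_\alpha}$. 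Shrinking $\eta_\alpha$ if necessary, which only reinforces the hypothesis, we may assume this image lies in a geodesically convex ball of $\NN$: then $\tilde v$ is nullhomotopic and admits an extension $\tilde V \colon B \to \NN$, obtained for instance by geodesic averaging from a fixed basepoint, with $\int_B |\nabla\tilde V|^2 \leq C \int_{\partial B}|\nabla\tilde v|^2 \leq C\eta_\alpha/\alpha$.

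The second step is to glue. Define $w$ to be the dilation $x \mapsto \tilde V(x/(1-h))$ on $B(0, 1-h)$ and, on the collar $A_h := \{1-h \leq \rho \leq 1\}$, take the radial linear interpolation
\[
 w(\rho, \theta) := \frac{1 - \rho}{h}\, \tilde v(\theta) + \frac{\rho - 1 + h}{h}\, v_\epsilon(1, \theta)
\]
in polar coordinates. Since $|v_\epsilon - \tilde v| = \dist(v_\epsilon, \NN) \leq \delta_0$ on $\partial B$, the interpolating segment stays inside the tubular neighborhood where Remark~\ref{remark: f} bounds $f$ quadratically by $\dist(\cdot, \NN)^2$. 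Straightforward computations using \eqref{stima al bordo} then give
\[
 \int_{A_h} |\partial_\rho w|^2 \leq \frac{C \epsilon^2}{h}\!\int_{\partial B}\!\frac{f(v_\epsilon)}{\epsilon^2} , \quad \int_{A_h}|\partial_\theta w|^2 \leq C h\!\int_{\partial B}\!|\nabla v_\epsilon|^2 , \quad \int_{A_h}\frac{f(w)}{\epsilon^2} \leq C h\!\int_{\partial B}\!\frac{f(v_\epsilon)}{\epsilon^2} ,
\]
so that $E_\epsilon(w, A_h) \leq C(\epsilon^2/h + h)\eta_\alpha$. Choosing $h = \epsilon$ and combining with the bound on $B(0, 1-h)$ yields $E_\epsilon(w, B) \leq C_\alpha \eta_\alpha$, and the lemma follows from $E_\epsilon(v_\epsilon, B) \leq E_\epsilon(w, B)$.

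The main obstacle is the nullhomotopy/extension step: it is the requirement that $\tilde v(\partial B)$ fit inside a geodesically convex chart of $\NN$ (so that one obtains an extension with quantitative Dirichlet control) that finally pins down the admissible value of $\eta_\alpha$, consistently with the one already fixed in Proposition~\ref{prop: concentration}. The boundary case, in which $B(x_0, \varepsilon^\alpha)$ meets $\partial\Omega$, is slightly different but analogous: the smoothness of $g$ along $\partial B \cap \partial\Omega$ contributes only an $\varepsilon$-independent amount of energy, which is absorbed in $C_\alpha$, and the same construction can be carried out on the resulting curvilinear half-disc.
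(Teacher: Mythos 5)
Your proof is correct and follows essentially the same strategy as the paper's: both use the minimality of $v_\epsilon$ on $B$ against a competitor built by projecting the boundary trace onto $\NN$, extending it inside $B$ with Dirichlet energy controlled by the boundary energy (the paper via a harmonic extension and the Schoen--Uhlenbeck extension lemma, you via an explicit geodesic-cone extension after shrinking $\eta_\alpha$ so the trace is nearly constant --- a smallness that the paper's route needs as well), and restoring the exact boundary values in an $\epsilon$-thin collar where Remark~\ref{remark: f} controls the potential term. The only difference is the mechanics of the collar correction (the paper adds a normal field with cutoff modulus $\varphi_\epsilon$ to the $\NN$-valued extension, you interpolate linearly between $\pi(v_\epsilon)$ and $v_\epsilon$ on an annulus of width $h=\epsilon$), which is immaterial.
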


Recall also that, due to Lemma~\ref{lemma: bochner}, $e_\epsilon(v_\epsilon)$ solves an elliptic inequality. 
Thus, we are in position to invoke a result by Chen and Struwe (\cite{ChStr} --- the reader is also referred to \cite[Theorem 2.2]{Sch}):
provided that $\eta_\alpha$ is small enough, Lemma \ref{lemma: comparison} implies the estimate
\[
 r^2 e_\varepsilon(\ue) (x_0) = e_\epsilon(v_\epsilon) (x) \leq \int_B e_\epsilon(v_\epsilon) \leq C_\alpha \eta_\alpha  .
\]
This concludes the proof, in case $B(x_0, \, \varepsilon^\alpha)$ does not intersect the boundary. 

We still have to cover the case $B(x_0, \, \varepsilon^\alpha)\nsubseteq\Omega$, but this entail no significant change in the proof (nor in the proof of Lemma~\ref{lemma: comparison}). As we deal with a local result, we can straighten the boundary and assume that $\Omega$ coincides locally with the set $\R^n_+$. In place of the Chen-Struwe result we can exploit 
\cite[Theorem 2.6]{Sch}, which deals with the Dirichlet boundary condition.
\endproof

\begin{proof}[Proof of Lemma~\ref{lemma: comparison}]
We split the proof in steps, for clarity.

\setcounter{step}{0}
\begin{step}[Construction of the harmonic extension]
The composition $\pi(v_\epsilon)$ is well defined, since the image of $v_\epsilon$ lies close to the vacuum manifold. Set $\sigma_\epsilon := \dist(v_\epsilon, \, \NN) = \abs{v_\epsilon - 
\pi(v_\epsilon)}$, and denote by $\omega_\epsilon$ an harmonic extension of $\left.\pi(v_\epsilon)\right|_{\partial B}$ on $B$. The existence of such an extension is a classical result by Morrey (see, for instance, \cite{Morrey}). Lemma 4.2 in \cite{SU} and \eqref{lemma: comparison} imply that
 \begin{equation} \label{energia armonica}
  \int_B \abs{\nabla \omega_\epsilon}^2 \leq C \int_{\partial B} e_\epsilon(v_\epsilon) \leq C_\alpha \eta_\alpha  .
 \end{equation}
 We wish to use $\omega_\epsilon$ as a comparison map, in order to obtain the $H^1$ bound for $v_\epsilon$; to do so, we have to take care of the boundary condition on $\partial D$. 
 \end{step}
 
 \begin{step}[An auxiliary map]
 It will be useful to introduce an auxiliary map $\varphi_\epsilon$. Using polar coordinates on $D$, we define $\varphi_\epsilon$ by the formula
 \[
  \varphi_\epsilon(\rho, \, \theta) = \begin{cases}
                               \epsilon^{-1} (\rho - 1 + \epsilon) \sigma_\epsilon(\theta) & \textrm{if } 1 - \epsilon \leq \rho \leq 1 \\
                               0                                                           & \textrm{if } \rho < 1 - \epsilon  ,
                              \end{cases}
 \]
 We claim that
 \begin{equation} \label{bound alpha_n}
  \int_B \left\{ \abs{\nabla \varphi_\epsilon}^2 + \frac{1}{\epsilon^2}\varphi_\epsilon^2 \right\} \leq C \epsilon
 \end{equation}
 and check it by a straightforward computation. Indeed,
\begin{align*}
  \int_B \left\{ \abs{\nabla \varphi_\epsilon}^2 + \frac{1}{\epsilon^2}\varphi_\epsilon^2 \right\} \leq 
  \int_0^{2\pi} \d\theta \int_{1 - \epsilon}^1 \d\rho \left\{\frac{1}{\epsilon ^2}\rho \abs{\sigma_\epsilon(\theta)}^2 +  
  \frac{1}{\rho} \abs{\sigma_\epsilon'(\theta)}^2 + \rho \abs{\sigma_\epsilon(\theta)}^2 \right\} \\
  \leq \frac{1}{\epsilon^2} \left(\epsilon - \frac{\epsilon^2}{2}\right) \norm{\sigma_\epsilon}_{L^2(S^1)}^2 - \log(1 - \epsilon) \norm{\sigma_\epsilon'}_{L^2(S^1)}^2 + 
   \left(\epsilon - \frac{\epsilon^2}{2}\right) \norm{\sigma_\epsilon}_{L^2(S^1)}^2
\end{align*}
 and \eqref{bound alpha_n} follows from \eqref{stima al bordo}, since $\abs{\sigma'_\epsilon} \leq \abs{\nabla(v_\epsilon - \pi(v_\epsilon))}$ and $\sigma_\epsilon^2 \leq C f(v_\epsilon)$.
 \end{step}
 
 \begin{step}[Construction of a normal field on $\NN$]
 By construction, $\left. (v_\epsilon - \omega_\epsilon) \right|_{\partial B}$ is a normal field on $\NN$, whose modulus is given by 
 $\sigma_\epsilon$. We want to extend it to a map $\nu_\epsilon\colon B \to \R^d$, so that $\nu_\epsilon(x)$ is orthogonal to $\NN$ at the point $\omega_\epsilon(x)$ and $\abs{\nu_\epsilon(x)} = \varphi_\epsilon(x)$, for all $x\in D$. At first, one may work locally, near a point $x_0\in \partial \Omega$, and exploit the existence of an orthonormal frame of normal vectors, defined on some neighborhood of $\omega_\epsilon(x_0)$. Then, the construction of $\nu_\epsilon$ is completed by a partition of the unity argument.
 \end{step}
 
 \begin{step}[Construction of a comparison map]
 Set $\widetilde\omega_\epsilon := \omega_\epsilon + \nu_\epsilon$. It follows from the previous steps that $\widetilde\omega_\epsilon$ enjoys these properties:
 \[
  \left. \widetilde\omega_\epsilon \right|_{\partial B} = \left. v_\epsilon \right|_{\partial B}  ,
 \]
 \[\pi(\widetilde\omega_\epsilon(x)) = v_\epsilon(x) \quad \textrm{and} \quad \dist(\widetilde\omega_\epsilon(x), \, \NN) = \varphi_\epsilon(x) \quad \textrm{for all } x\in B .
 \]
 In particular, $\widetilde\omega_\epsilon$ is an admissible comparison map for $v_\epsilon$. By this information and Lemma~\ref{lemma: npp}, we infer a bound for the gradient of $\widetilde\omega_\epsilon$:
 \[
  \abs{\nabla \widetilde\omega_\epsilon}^2 \leq (1 + C \varphi_\epsilon) \abs{\nabla \omega_\epsilon}^2 + C\left( \abs{\nabla \varphi_\epsilon}^2 + \varphi_\epsilon^2 \right)
 \]
 Since $\abs{\varphi_\epsilon} \leq \delta$ by construction, integrating this inequality over $D$ and exploiting \eqref{stima al bordo} we obtain
 \begin{equation} \label{bound grad v_n}
  \norm{\nabla \widetilde\omega_\epsilon}^2_{L^2(D)} \leq (1 + \delta) \norm{\nabla\omega_\epsilon}^2_{L^2(D)} + C \epsilon  .
 \end{equation}
 The potential energy of $\widetilde\omega_\epsilon$ is estimated by means of Remark~\ref{remark: f}:
 \[
  \frac{1}{\epsilon^2} \int_B f(\widetilde\omega_\epsilon) \leq \frac{M_0}{2\epsilon^2} \int_B \varphi^2_\epsilon  .
 \]
 Combining this inequality with \eqref{bound alpha_n} and \eqref{bound grad v_n}, we deduce
 \begin{equation} \label{bound v_n}
 \int_B e_\epsilon(v_\epsilon) \leq \int_B e(\widetilde\omega_\epsilon) \leq (1 + \delta) \norm{\nabla\omega_\epsilon}^2_{L^2(D)} + C \epsilon  .
 \end{equation}
 With this estimate and \eqref{energia armonica}, we complete the proof. \qed
 \end{step}
 \let\qed\relax
 \end{proof}

Having established all these preliminary results, Proposition~\ref{prop: palle} follows easily from a covering argument as, for instance, the one in \cite{Cetraro} (see also \cite[Chapter IV]{BBH}).
 
\begin{proof}[Proof of Proposition~\ref{prop: palle}]
By Vitali covering lemma, we can find a finite family of points $\{y_i\}_{i\in I}$ such that
\[
\Omega \subseteq \bigcup_{i\in I} B(y_i, \, 3\varepsilon^\alpha) 
\]
and
\[
 B(y_i, \, \varepsilon^\alpha)  \cap  B(y_j, \, \varepsilon^\alpha)  = \emptyset \qquad \textrm{if } i\neq j  .
\]
Let $\eta_\alpha = \eta_\alpha(\alpha, \, \delta)$ be given by Proposition~\ref{prop: chen struwe}. Define $J_\varepsilon$ as the subset of indexes $i\in I$ for which the inequality 
\[
\int_{B(y_i, \, 3\varepsilon^\alpha) } \abs{\nabla\ue}^2 \geq \eta_\alpha \left( \abs{\log\varepsilon} + 1 \right)
\]
holds; then, by Lemma~\ref{lemma: energy estimate}, we have
\begin{equation}
\eta_\alpha \left( \abs{\log\varepsilon} + 1 \right) \textrm{card} (J_\varepsilon) \leq \sum_{j\in J_\varepsilon} \int_{ B(y_i, \, 3\varepsilon^\alpha) }\abs{\nabla\ue}^2 \leq 
C \left(\abs{\log\varepsilon} + 1 \right)
\label{palle 1}
\end{equation}
(to prove the last inequality, recall that there is a universal constant $C$ such that each point of $\Omega$ is covered by at most $C$ balls of radius $3\varepsilon^\alpha$).
It follows that $\textrm{card}(J_\varepsilon)$ is bounded independently of $\varepsilon$. Moreover, Propositions~\ref{prop: concentration} and~\ref{prop: chen struwe} imply that
\[
\dist(\ue(x), \, \NN)\leq\delta \qquad \textrm{if } x\in  B(y_i, \, 3\varepsilon^\alpha) \textrm{ and } i\in I \setminus J_\varepsilon
\]
\[
\varepsilon^{4\alpha}\ee(\ue)(x) \leq C_\alpha \qquad \textrm{if } x\in  B(y_i, \, \varepsilon^\alpha) \textrm{ and } i\in I \setminus J_\varepsilon  .
\]

Now, let us fix an index $i\in J_\varepsilon$, and let us focus on $B(y_i, \, 3\varepsilon^\alpha)$. Being $\lambda_0 = \lambda_0(\delta)$ and \mbox{$\mu_0 = \mu_0(\delta)$}
given by Proposition~\ref{prop: clearing out}, we consider a finite covering $\{B(x_m^i, \, 3\lambda_0\varepsilon)\colon m\in\Lambda_{\varepsilon, \, i}\}$ of $B(y_i, \, 3\varepsilon^\alpha)$,
such that
\[
B(x_m^i, \, \lambda_0\varepsilon) \cap B(x_n^i, \, \lambda_0\varepsilon)= \emptyset \qquad \textrm{if } m\neq n  ,
\]
and we define the set $L_{\varepsilon, \, i}$ of indexes $m\in \Lambda_{\varepsilon, \, i}$ such that
\[
\frac{1}{\varepsilon^2}\int_{B(x_m^i, \, 3\lambda_0\varepsilon)} f(\ue) > \mu_0  .
\]
Since $\varepsilon^{-2}\int_{B(x_i, \, 3\varepsilon^\alpha)} f(\ue)$ is controlled by Lemma~\ref{lemma: local estimate}, we can bound the cardinality of $L_{\varepsilon, \, i}$, independently 
of~$\varepsilon$, exactly as in \eqref{palle 1}. By Proposition~\ref{prop: clearing out}, we have that $\dist(\ue(x), \, \NN) \leq \delta$ if $x\in B(x_m^i, \, 3\lambda_0\varepsilon)$ and~$m\notin L_{\varepsilon, \, i}$.

Combining all these facts, we conclude easily.
\end{proof}


Denote by $x_1^\varepsilon, \, x_2^\varepsilon, \ldots, \, x_{k_\varepsilon}^\varepsilon$ the elements of $X_\varepsilon$. For any given sequence $\varepsilon_n\searrow 0$ we can extract a renamed subsequence, such that $k_{\varepsilon_n}$ is independent of $n$ (say, $k_{\varepsilon_n} = N'$) and
\[
x_i^{\varepsilon_n} \to L_i \qquad \textrm{for } \: i \in \{ 1, \, 2, \, \ldots, \, N' \} ,
\]
for some point $L_i\in\overline\Omega$. Some of the points $L_i$ might coincide; therefore, we relabel them as $a_1, \, a_2, \, \ldots, \, a_{N}$, with~$N\leq N'$, in such a way that $a_i \neq a_j$ if $i\neq j$. 

For the time being, we cannot exclude the possibility that $a_i\in\partial\Omega$, for some index $i$. To deal with this difficulty, we enlarge a little the domain $\Omega$ and consider a smooth, bounded domain $\Omega'\supseteq\Omega$, with the same homotopy type as $\Omega$ --- for instance, we can define $\Omega'$ as a $r$-neighborhood of $\Omega$, for $r$ small enough. 
Also, we fix a smooth function $\overline g \colon \Omega'\setminus\Omega \to \NN$, such that $\overline g = g$ on $\partial\Omega$ and 
$\norm{\nabla\overline g}_{L^2(\Omega'\setminus\Omega)} \leq C \norm{g}_{H^1(\partial\Omega)}$.
From now on, we extend systematically any function $v\colon \Omega \to \NN$ with $v = g$ on $\partial\Omega$ to a map $\overline v \colon \Omega'\to \NN$, by setting $\overline v = \overline g$ on $\Omega' \setminus \Omega$.

\subsection{An upper estimate away from singularities}

Fix a number $\rho > 0$ small enough, say,
\[
 \rho < \dist(\Omega, \, \Omega') , \qquad \rho < \frac12\min_{i \neq j} \abs{a_i - a_j}  ,
\]
so that the disks $B(a_i, \, \rho)$ are mutually disjoint and contained in $\Omega'$. The aim of the following subsection is to prove the following upper bound for energy of the minimizers, away from the singularities.

\begin{prop} \label{prop: upper estimate}
There exists a constant $C$, independent of $n$ and $\rho$, and a number $N_\rho$ such that for every $n\geq N_\rho$ we have
 \[
 \frac12 \int_{\Omega' \setminus \bigcup_i B(a_i, \, \rho)} \abs{\nabla \un}^2 \leq \kappa_* \abs{\log\rho} + C  .
 \]
\end{prop}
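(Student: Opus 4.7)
The plan is to combine the global upper bound of Lemma~\ref{lemma: energy estimate} with a matching lower bound for the energy concentrated inside the balls $B(a_i, \, \rho)$, so that the leading order term $\kappa_* \abs{\log \varepsilon_n}$ cancels and leaves exactly $\kappa_* \abs{\log \rho} + O(1)$ outside. In other words, I would show that the expensive part of $E_{\varepsilon_n}(\un)$ is entirely concentrated in a neighborhood of the singular set.

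\textbf{Setup and extraction of homotopy classes.} For $n \geq N_\rho$ large enough, the convergence $x_i^{\varepsilon_n} \to L_i$ ensures $X_{\varepsilon_n} \subset \bigcup_i B(a_i, \, \rho/4)$. By Proposition~\ref{prop: palle}, $\dist(\overline{\un}(x), \, \NN) \leq \delta$ on $\Omega' \setminus \bigcup_i B(a_i, \, \rho/4)$ (trivially on $\Omega' \setminus \Omega$, where $\overline{\un} = \overline g$). The projected map $w_n := \pi \circ \overline{\un}$ is therefore a well-defined map into $\NN$ on this region. Using Sard's lemma, I would pick a regular value $\rho' \in (\rho/2, \, \rho)$ so that each $w_n\bigr|_{\partial B(a_i, \, \rho')}$ is a smooth loop in $\NN$, of free homotopy class $\gamma_{n, i} \in \Gamma(\NN)$. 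Since $w_n$ extends continuously to the whole of $\Omega' \setminus \bigcup_i B(a_i, \, \rho')$ into $\NN$, the converse direction of Lemma~\ref{lemma: extension} yields
\[
\prod_i \gamma_{n, i} \cap \prod_j \gamma_j \neq \emptyset,
\]
where the $\gamma_j$ are the free homotopy classes of $\overline g$ on the connected components of $\partial \Omega'$. By the definition \eqref{kappa *} of $\kappa_*$ combined with the subadditivity \eqref{lambda * <}, this forces
\[
\sum_i \lambda_*(\gamma_{n, i}) \geq \kappa_* .
\]

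\textbf{Per-ball lower bound.} The core analytical step is to establish
\[
E_{\varepsilon_n}(\un; \, B(a_i, \, \rho)) \geq \lambda_*(\gamma_{n, i}) \left( \abs{\log \varepsilon_n} - \abs{\log \rho} \right) - C
\]
for every $i$, with $C$ independent of $n$ and $\rho$. I would obtain this by adapting the coarea/Sandier argument of Proposition~\ref{prop: lower LG}: introducing the modulus-type function $\sigma_n(x) := \dist(\un(x), \, \NN)$, estimating the potential from below via Remark~\ref{remark: f}, and invoking a Sandier-type logarithmic lower bound for the Dirichlet integral of the projected map with prescribed trace in homotopy class $\gamma_{n, i}$ (cf.\ \cite{Sand}, and its extension to general target manifolds in \cite{Ch}), exactly as in the bound $\Theta(t) \geq -\kappa_* \log(\textrm{rad}(\omega_t)) - C$ appearing in the proof of Proposition~\ref{prop: lower LG}, with $\kappa_*$ replaced by the local cost $\lambda_*(\gamma_{n,i})$.

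\textbf{Conclusion.} Summing the per-ball estimates over $i = 1, \, \ldots, \, N$, using $\sum_i \lambda_*(\gamma_{n, i}) \geq \kappa_*$, and comparing with the extended global upper bound
\[
E_{\varepsilon_n}(\overline{\un}, \, \Omega') \leq \kappa_* \abs{\log \varepsilon_n} + C
\]
(which follows from Lemma~\ref{lemma: energy estimate} plus the fixed contribution $\tfrac12 \int_{\Omega' \setminus \Omega} \abs{\nabla \overline g}^2$), the $\kappa_* \abs{\log \varepsilon_n}$ terms cancel and one is left with
\[
\frac12 \int_{\Omega' \setminus \bigcup_i B(a_i, \, \rho)} \abs{\nabla \un}^2 \leq \kappa_* \abs{\log \rho} + C,
\]
after discarding the non-negative potential term.

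\textbf{Main obstacle.} The delicate ingredient is the per-ball lower bound: one must adapt the Sandier/Chiron machinery to a general vacuum manifold obeying \ref{hp: N}, and one must accommodate the fact that the infimum defining $\lambda_*$ runs over decompositions of a class into several sub-classes (so the cost inside a ball is $\lambda_*$, not $\lambda$). In particular, one has to argue that the relevant ``cost'' inside $B(a_i, \, \rho)$ is indeed governed by $\lambda_*(\gamma_{n, i})$ rather than by $\lambda(\gamma_{n, i})$, which amounts to showing that the internal mass splitting has already been optimised by the minimizer — this is the only place where \ref{hp: N} (finiteness of conjugacy classes) really enters the asymptotic analysis.
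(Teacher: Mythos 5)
Your global architecture is the same as the paper's: the paper also proves Proposition~\ref{prop: upper estimate} by subtracting a localized Sandier-type lower bound near each $a_i$ (its \eqref{kappa n}, Lemma~\ref{lemma: sand} and Lemma~\ref{lemma: near singularities}) from the global bound of Lemma~\ref{lemma: energy estimate}, with the same homotopy bookkeeping through Lemma~\ref{lemma: extension} and the subadditivity \eqref{lambda * <} (the paper works with $\kappa_{i,n}$, the infimum of $\lambda_*$ over the product of the classes on the small defect circles, rather than your $\lambda_*(\gamma_{n,i})$; both give $\sum_i \geq \kappa_*$, so that part of your bookkeeping is fine).

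The genuine gap is in your ``per-ball lower bound''. A Sandier/Chiron bound of the type you invoke (and of the type proved in Lemma~\ref{lemma: sand}) applies to maps \emph{with values in} $\NN$, whereas $\un$ is only $\delta$-close to $\NN$ on the perforated region $B(a_i,\rho)\setminus\bigcup_j B(x_j^{\varepsilon_n},\lambda_0\varepsilon_n)$. What you can bound from below is $\frac12\int\abs{\nabla\vn}^2$ for the projected map $\vn=\pi(\un)$, and by Lemma~\ref{lemma: npp} this controls $\frac12\int\abs{\nabla\un}^2$ only up to the error term $\int \dn\abs{\nabla\vn}^2$, with $\dn=\dist(\un,\NN)$. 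Proving that this error is $O(1)$ uniformly in $n$ and $\rho$ is precisely the technical heart of the paper's proof (Lemma~\ref{lemma: near singularities}): away from the $\varepsilon_n^\alpha$-neighborhood of $X_{\varepsilon_n}$ it uses the pointwise decay of Proposition~\ref{prop: chen struwe} (whence the constraint $\alpha<1/6$), and inside that neighborhood it uses H\"older, Gagliardo--Nirenberg interpolation combined with elliptic regularity for the Euler--Lagrange equation, the bound $\abs{Df(u)}\leq M_0\dist(u,\NN)$ of Remark~\ref{remark: f}, and Lemma~\ref{lemma: local estimate}. Your proposed substitute --- adapting the coarea argument of Proposition~\ref{prop: lower LG} --- does not fill this hole: that argument is tied to the Landau--de Gennes structure (the modulus $\abs{Q}$, the sphere-valued map $Q/\abs{Q}$, and the specific lower bound $f^*\geq\mu_1(1-\abs{Q})^2$ of Lemma~\ref{lemma: f below LG}), while Proposition~\ref{prop: upper estimate} is stated under \ref{hp: f}--\ref{hp: N}, where $f$ is only comparable to $\dist^2(\cdot,\NN)$ near $\NN$ and there is no cone structure; moreover it yields one global estimate rather than bounds localized in each $B(a_i,\rho)$ with remainder uniform in $\rho$, which is what the cancellation in your final step requires. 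So the strategy is right, but the step from the $\NN$-valued lower bound to a lower bound for $E_{\varepsilon_n}(\un;B(a_i,\rho))$ is missing, and it is not a routine adaptation.
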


Before facing the proof, we fix some notations. For a fixed $i$, define $\Lambda_i$ as the set of indexes $j \in\{ 1, \, 2, \, \ldots, \, N_1\}$ such that $x_j^{\varepsilon_n} \to a_i$. For $n$ sufficiently large, we have  $B(a_i, \, \rho)\supseteq B(x_j^{\varepsilon_n}, \, \lambda_0\varepsilon_n)$ if and only if $j\in\Lambda_i$. We introduce the sets
\[
 \Omega_{i, n} := B(a_i, \, \rho) \setminus \bigcup_{j \in\Lambda_i} B(x_j^{\varepsilon_n}, \, \lambda_0\varepsilon_n)  .
\]
Recall that, by Proposition~\ref{prop: palle}, we have $\dist(\un(x), \, \NN)\leq \delta$ for all $x\in \Omega_{i, n}$. Thus, we can define $\vn, \, \dn$ by 
\[
 \vn := \pi(\left.\un\right|_{\Omega_{i, n}}), \qquad  \dn := \dist(\left.\un\right|_{\Omega_{i, n}}, \, \NN).
\]
and notice that $\vn, \, \dn\in H^1(\Omega_{i, n}, \, \NN)$. 
Denote by $\eta_{j, n}$ the free homotopy class of $\vn$, restricted to~$\partial B(x_j^{\varepsilon_n}, \, \lambda_0\varepsilon_n)$, and set
\[
  \kappa_{i, n} := \inf\Big\{ \lambda_*(\gamma)\colon \gamma\in\prod_{j\in\Lambda_i} \eta_{j, n} \Big\}  ,
\]
The continuity of $\vn$ and Lemma~\ref{lemma: extension} imply
\[
 \prod_{j = 1}^{N_1} \eta_{j, n} \cap \prod_{i = 1}^k \gamma_i \neq \emptyset  .
\]
By the definition \eqref{kappa *} of $\kappa_*$ we infer
\begin{equation}
 \kappa_* \leq \sum_{i = 1}^{N} \kappa_{i, n}  , \qquad \textrm{for all } n\in\N  .
 \label{kappa n}
\end{equation}

We can assume without loss of generality that $\kappa_{i, n} > 0$ for all $i$, $n$. Indeed, if $\kappa_{i, n} = 0$ then there is no topological obstruction to the construction of Lemma~\ref{lemma: comparison}. Arguing in a similar way, we can exhibit a comparison map $\widetilde{u}_{\varepsilon_n}$, with
$\left. \widetilde{u}_{\varepsilon_n} \right|_{\partial B(a_i, \, \rho)} = \left. \un \right|_{\partial B(a_i, \, \rho)}$, such that
\[
 E_\varepsilon(\un, B(a_i, \, \rho)) \leq E_\varepsilon(\widetilde{u}_{\varepsilon_n}, B(a_i, \, \rho)) \leq C  .
\]
Applying the Chen and Struwe's result on some small ball contained in $B(a_i, \, \rho)$, we obtain $\ee(\ue) \leq C$ on~$B(a_i, \, \rho)$. In turns, this forces
\[
 \dist^2(\un(x), \, \NN) \leq C f(\un(x)) \leq C \varepsilon_n^2 \leq \delta  ,
\]
for all $x\in B(a_i, \, \rho)$ and $n$ large enough. Therefore, no singularity is contained in $B(a_i, \, \rho)$ if $\kappa_{i, n} = 0$, and the point $a_i$ can be dropped out.

After this preliminaries, we are ready to face the proof of Proposition~\ref{prop: upper estimate}. In fact, we will give an indirect proof, based on a lower estimate for the energy near the
singularities.

\begin{lemma}
 \label{lemma: sand}
 There exists a constant $C$, independent of $n$ and $\rho$, such that for all function $v\in H^1(\Omega_{i, n}, \, \NN)$ the estimate
 \[
 \frac12 \int_{\Omega_{i, n}} \abs{\nabla v}^2 \geq \kappa_{i, n}\left(\log\frac{\rho}{\varepsilon_n} - C\right)
 \]
 holds.
\end{lemma}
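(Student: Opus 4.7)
The plan is to adapt Sandier's lower-bound argument from \cite{Sand} to the present multi-singularity setting, exploiting the polygroup subadditivity \eqref{lambda * <}. Throughout, fix $v \in H^1(\Omega_{i,n}, \NN)$ and set $X_i^n := \{x_j^{\varepsilon_n} : j \in \Lambda_i\}$. Recall that $\textrm{card}(\Lambda_i)$ is bounded uniformly in $n$ by Proposition~\ref{prop: palle}.

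The core local ingredient is a length-energy estimate on circles. For any $y \in \R^2$ and $r > 0$ such that $\partial B(y, r) \subset \Omega_{i,n}$, Cauchy--Schwarz yields
\[
\int_{\partial B(y, r)} \abs{\nabla_\tau v}^2 \, \d\mathcal H^1 \geq \frac{1}{2\pi r}\left(\int_{\partial B(y, r)} \abs{\nabla_\tau v}\, \d\mathcal H^1\right)^{\!2} \geq \frac{\lambda(\gamma)^2}{2\pi r} \geq \frac{2\lambda_*(\gamma)}{r},
\]
where $\gamma \in \Gamma(\NN)$ is the free homotopy class of $v|_{\partial B(y, r)}$; the second inequality follows from \eqref{lambda}, the third from \eqref{lambda *} specialized to $k = 1$. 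If $\partial B(y, r)$ encloses exactly the subset $S \subset X_i^n$, then by continuity of $v$ and Lemma~\ref{lemma: extension}, $\gamma$ lies in $\prod_{j : x_j^{\varepsilon_n} \in S} \eta_{j, n}$. Introducing
\[
\kappa_S := \inf\Big\{\lambda_*(\gamma) : \gamma \in \prod\nolimits_{j : x_j^{\varepsilon_n} \in S} \eta_{j, n}\Big\},
\]
with $\kappa_\emptyset = 0$ and $\kappa_{X_i^n} = \kappa_{i, n}$, one obtains the scale-invariant bound $\int_{\partial B(y, r)} \abs{\nabla_\tau v}^2 \geq 2\kappa_S/r$.

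The next step is Sandier's \emph{ball-growth scheme}. Initialize at scale $s_0 := \lambda_0\varepsilon_n$ with the disjoint balls $\{B(x_j^{\varepsilon_n}, \lambda_0\varepsilon_n)\}_{j\in\Lambda_i}$ and grow them with identical radial speed; whenever two balls first meet, merge them into a single ball whose radius equals the sum of the parents' radii, and then continue the growth. This produces a one-parameter family $s \mapsto \mathcal B(s)$ of disjoint closed balls $B_k(s) = B(y_k(s), r_k(s))$, valid as long as the family stays inside $\overline{B(a_i, \rho)}$ (true for $s$ up to order $\rho$ by the choice of $\rho$). For each $s$ away from merging events and each $k$, the circle $\partial B_k(s)$ lies in $\Omega_{i,n}$, so the circle estimate gives $\int_{\partial B_k(s)} \abs{\nabla v}^2 \geq 2\kappa_{S_k(s)}/r_k(s)$, where $S_k(s) \subset X_i^n$ collects the singularities enclosed in $B_k(s)$. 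Using the coarea formula on each growing ball (which grows at unit speed in its own radial variable), the energy in the annular strips is
\[
\int_{\Omega_{i,n}} \abs{\nabla v}^2 \geq \sum_{\text{growth intervals}} \int \sum_k \frac{2\kappa_{S_k(s)}}{r_k(s)}\, \d s.
\]
By iterating \eqref{lambda * <} across every past merger, $\sum_k \kappa_{S_k(s)} \geq \kappa_{i, n}$ at every scale; a careful accounting of the radius jumps at mergers (each merger doubles or sums radii but reduces the number of balls, as in the two-ball computation $R_f \rightsquigarrow 2R_1 \rightsquigarrow \ldots$) yields
\[
\tfrac12 \int_{\Omega_{i,n}} \abs{\nabla v}^2 \geq \kappa_{i, n}\left(\log\frac{\rho}{\varepsilon_n} - C\right),
\]
with $C$ depending only on $\lambda_0$ and $\textrm{card}(\Lambda_i)$, hence independent of $n$ and $\rho$.

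\textbf{Main obstacle.} The technical heart of the argument is the ball-growth / merging bookkeeping: one must verify that the family $\mathcal B(s)$ stays inside $\overline{B(a_i, \rho)}$ throughout the growth (which follows from the choice of $\rho < \frac12\min_{i\neq j}\abs{a_i - a_j}$ and the fact that all singularities in $\Lambda_i$ converge to $a_i$, for $n$ large), and crucially that the pointwise sum $\sum_k \kappa_{S_k(s)}$ is controlled below by $\kappa_{i,n}$ uniformly in $s$. This latter property is the polygroup-theoretic analogue of the additivity of winding numbers in the Ginzburg--Landau setting and is ensured precisely by \eqref{lambda * <}: when two clusters carrying topological charges $\kappa_{S_1}, \kappa_{S_2}$ merge, the new charge satisfies $\kappa_{S_1\cup S_2} \leq \kappa_{S_1} + \kappa_{S_2}$, so the per-ball lower bound $\sum_k 2\kappa_{S_k}/r_k$ never decreases across a merger in a way that would spoil the logarithmic accumulation in the annular strips.
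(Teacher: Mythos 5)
Your proposal is correct and follows essentially the same route as the paper: the pointwise circle estimate via $\lambda$, Cauchy--Schwarz and $\lambda(\gamma)^2 \geq 4\pi\lambda_*(\gamma)$, followed by Sandier's ball-growth and merging scheme with the subadditivity \eqref{lambda * <} of $\lambda_*$ replacing the additivity of degrees. The paper's proof is only a sketch deferring the growth/merging bookkeeping to \cite{Sand}; you have simply made explicit the same construction, with matching constants and the same caveat that the small balls stay at distance of order $\rho$ from $\partial B(a_i,\rho)$ for $n$ large.
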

\begin{proof}[Sketch of the proof]
The lemma can be established arguing exactly as in \cite[Theorem 1]{Sand} (the reader is also referred to \cite{Ch}). At first, one has to consider the case $\Omega_{i, n}$ is an annulus $B_{\rho} \setminus B_{\varepsilon}$, with $0 < \varepsilon < \rho$; then, $\kappa_{i, n}$ reduces to $\lambda_*(\eta)$, where $\eta$ is the homotopy class of $\left.v\right|_{\partial B_{\rho}}$. Assuming that $v$ is smooth, a computation in polar coordinates gives
\[
 \frac12 \int_{B_{\rho} \setminus B_{\varepsilon}} \abs{\nabla v}^2 = \frac12 \int_{\varepsilon}^{\rho} \d r \int_0^{2\pi} \d\theta \left\{ r\abs{\frac{\partial v}{\partial r}}^2
 + \frac{1}{r}\abs{\frac{\partial v}{\partial\theta}}^2 \right\} \geq \frac12 \int_{\varepsilon}^{\rho} \frac{\d r}{r} \int_0^{2\pi} \d\theta \abs{\frac{\partial v}{\partial\theta}}^2
\]
and, since the definition \eqref{lambda} of $\lambda$ implies
\[
 2\pi  \int_0^{2\pi}  \abs{\frac{\partial v}{\partial\theta}}^2 \, \d\theta \geq \lambda(\eta)^2  ,
\]
we deduce
\begin{equation} \label{sand 1}
 \frac12 \int_{B_{\rho} \setminus B_{\varepsilon}} \abs{\nabla v}^2 \geq \frac{\lambda(\eta)^2}{4\pi} \log\frac{\rho}{\varepsilon} \geq \lambda_*(\eta) \log\frac{\rho}{\varepsilon}  .
\end{equation}
Having proved the lemma in this simple case, we can repeat the same argument as \cite{Sand}, the only difference being $\kappa_{i, n}$ in place of the degree.
We exploit the property \eqref{lambda * <} instead of the triangle inequality for the degrees. Finally, since we may assume 
\[
 \dist(B(x_j^{\varepsilon_n}, \, \lambda_0\varepsilon_n), \, \partial B(a_i, \, \rho)) > \rho/2
\]
for $j\in\Lambda_i$ and $n$ large enough, we can prove the analogous of \cite[Proposition]{Sand}, which reads
\[
 \frac12 \int_{\Omega_{i, n}} \abs{\nabla v}^2 \geq \kappa_{i, n}\log\left(\frac{\rho/4}{\lambda_0\varepsilon_n}\right)  .
\]
This concludes the proof.
\end{proof}

\begin{lemma} \label{lemma: near singularities}
 There exists a constant $C$, independent of $n$ and $\rho$, and a number $N_\rho$ such that for every $n\geq N_\rho$ and every $i$ we have
 \[
 \frac12 \int_{\Omega_{i, n}} \abs{\nabla \un}^2 \geq \kappa_{i, n}\left(\log\frac{\rho}{\varepsilon_n} - C\right) - C  .
 \]
\end{lemma}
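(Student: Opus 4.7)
\medskip

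\noindent\textit{Proof plan.} The strategy is to apply Lemma~\ref{lemma: sand} to the projection $\vn := \pi \circ \un|_{\Omega_{i,n}}$ and then transfer the estimate from $\vn$ to $\un$. The projection is well-defined in $H^1(\Omega_{i, n}, \NN)$ because $\dist(\un, \NN) \leq \delta \leq \delta_0$ on $\Omega_{i,n}$ by Proposition~\ref{prop: palle}, and its restriction to each $\partial B(x_j^{\varepsilon_n}, \lambda_0 \varepsilon_n)$ lies in the class $\eta_{j, n}$ by construction. Lemma~\ref{lemma: sand} will therefore yield
\[
 \tfrac12 \int_{\Omega_{i, n}} |\nabla \vn|^2 \geq \kappa_{i, n}\bigl(\log(\rho/\varepsilon_n) - C\bigr).
\]

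To pass from $\vn$ to $\un$ I plan to combine three ingredients: the pointwise inequality $|\nabla\un|^2 \geq (1 - M\dn)|\nabla\vn|^2$ of Lemma~\ref{lemma: npp}, the coercivity $f(\un) \geq \tfrac{m_0}{2}\dn^2$ of Remark~\ref{remark: f}, and the uniform Pohozaev bound $\int_\Omega f(\un)/\varepsilon_n^2 \leq C$ (a consequence of Lemma~\ref{lemma: pohoz} together with the smoothness of $g$ and $\partial\Omega$). Completing the square in $\dn$ in the sum $\tfrac12|\nabla\un|^2 + f(\un)/\varepsilon_n^2$ absorbs the cross term $-\tfrac{M}{2}\dn|\nabla\vn|^2$ against the potential and produces the pointwise estimate
\[
 \tfrac12|\nabla\un|^2 + \frac{f(\un)}{\varepsilon_n^2} \geq \tfrac12 |\nabla \vn|^2 - \frac{M^2 \varepsilon_n^2}{8 m_0} |\nabla \vn|^4 .
\]
Integrating over $\Omega_{i, n}$ and using the Pohozaev bound reduces the claim to showing $\varepsilon_n^2 \int_{\Omega_{i, n}} |\nabla \vn|^4 \leq C$ uniformly in $n$ and $\rho$.

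The main obstacle lies in establishing this uniform $L^4$ bound, which itself rests on the a priori pointwise gradient estimate
\[
 |\nabla \un|(x) \leq \frac{C}{\dist(x, X_{\varepsilon_n})} \qquad \text{whenever } \dist(x, X_{\varepsilon_n}) \geq \lambda_0 \varepsilon_n .
\]
This is sharper than what Proposition~\ref{prop: palle}(ii) provides directly (the latter gives only $\ee(\un)\leq C_\alpha \varepsilon_n^{-4\alpha}$ outside an $\varepsilon_n^\alpha$-neighborhood), and I would derive it by a scaling argument. For such a point $x$, setting $r := \tfrac12 \dist(x, X_{\varepsilon_n})$, the rescaled map $\tilde u(y) := \un(x + r y)$ is still a minimizer on the unit ball with parameter $\tilde \varepsilon := \varepsilon_n / r$; since $B(x, 2r)$ is disjoint from $X_{\varepsilon_n}$, the $\varepsilon$-regularity result of Proposition~\ref{prop: chen struwe} combined with the Bochner inequality of Lemma~\ref{lemma: bochner} applied to $\tilde u$ yields $\ee_{\tilde \varepsilon}(\tilde u)(0) \leq C$, which rescales to $\ee_{\varepsilon_n}(\un)(x) \leq C/r^2$. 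Lemma~\ref{lemma: npp} then transfers the bound to $\vn$, and integrating $|\nabla \vn|^4 \leq C / r^4$ in polar coordinates around each $x_j^{\varepsilon_n}$ gives $\int_{\Omega_{i, n}} |\nabla \vn|^4 \leq C \int_{\lambda_0 \varepsilon_n}^\rho r^{-3} \, dr \leq C \varepsilon_n^{-2}$, as required.
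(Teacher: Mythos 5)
Your opening move coincides with the paper's: apply Lemma~\ref{lemma: sand} to $\vn=\pi(\un)$ and use the lower bound of Lemma~\ref{lemma: npp} to transfer the estimate to $\un$. The divergence is in how you control the resulting error term, and here both of your auxiliary ingredients are unjustified. First, the ``uniform Pohozaev bound'' $\varepsilon_n^{-2}\int_\Omega f(\un)\leq C$ is not a consequence of Lemma~\ref{lemma: pohoz} on this domain: the identity \eqref{pohoz} only yields such a bound when the boundary term $(x-x_0)\cdot\nu$ has a favourable sign, i.e.\ when the domain is star-shaped, whereas $\Omega$ here is a general (possibly multiply connected) domain --- this is precisely why the paper only has at its disposal the \emph{local} estimate of Lemma~\ref{lemma: local estimate}, over balls of radius $\varepsilon^\alpha$. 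A bound of the type $\varepsilon_n^{-2}\int_{\Omega_{i,n}}f(\un)\leq C$ at the fixed scale $\rho$ could be obtained by averaging over circles once one knows that the energy on annuli away from the singularities is $O_\rho(1)$, but that is exactly Proposition~\ref{prop: upper estimate}, which is proved \emph{from} the present lemma; using it here would be circular. Second, the pointwise decay $|\nabla\un|(x)\leq C/\dist(x,X_{\varepsilon_n})$ does not follow from rescaling plus Proposition~\ref{prop: chen struwe}: the Chen--Struwe $\varepsilon$-regularity requires the energy of the rescaled map on the unit ball (equivalently, $E_{\varepsilon_n}(\un,B(x,r))$ with $r\sim\dist(x,X_{\varepsilon_n})$) to lie \emph{below a small threshold}, and a priori you only know the global bound $\kappa_*|\log\varepsilon_n|+C$ of Lemma~\ref{lemma: energy estimate} and the crude estimate $\ee(\un)\leq C_\alpha\varepsilon_n^{-4\alpha}$ of \eqref{palle bis} outside $\varepsilon_n^\alpha$-neighbourhoods; in Proposition~\ref{prop: chen struwe} the smallness is manufactured by the good-radius selection and the comparison map of Lemma~\ref{lemma: comparison}, not by scaling alone. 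The $C/d$ decay you invoke is essentially a consequence of the upper bounds this lemma is designed to establish, so it cannot be an input.

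The paper's proof avoids both issues by estimating the error term $\int_{\Omega_{i,n}}\dn|\nabla\vn|^2$ directly with only the tools already available: on the set $Y_n$ of points within $\varepsilon_n^\alpha$ of $X_{\varepsilon_n}$ it uses H\"older, the Gagliardo--Nirenberg inequality, the equation \eqref{EL} together with $|Df(\un)|\leq M_0\dist(\un,\NN)$, and then the local potential bound of Lemma~\ref{lemma: local estimate} (legitimate there, since $Y_n$ is a bounded number of balls of radius $\varepsilon_n^\alpha$); outside $Y_n$ it uses only \eqref{palle bis}, which gives $\dn\lesssim\varepsilon_n^{1-2\alpha}$ and $|\nabla\un|^2\lesssim\varepsilon_n^{-4\alpha}$, hence a contribution $O(\varepsilon_n^{1-6\alpha})$, bounded after choosing $\alpha<1/6$. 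If you want to keep your completion-of-the-square formulation, you would still face the same two gaps, since it trades the cross term for $\varepsilon_n^2\int|\nabla\vn|^4$ plus the global potential integral, neither of which you can currently bound.
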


\proof
The energy of $\vn$ on $\Omega_{i, n}$ is bounded by below by Lemma~\ref{lemma: sand}; moreover, the lower bound provided by \eqref{npp} entails
\[
 \abs{\nabla \un}^2 \geq \left(1 - C\dn\right) \abs{\nabla \vn}^2  .
\]
If we knew
\begin{equation}
 \int_{\Omega_{i, n}} \dn\abs{\nabla \vn}^2  \leq C  ,
\label{n s 1}
\end{equation}
then the lemma would follow. Therefore, let us introduce the set
\[
 Y_n := \left\{x\in \Omega\colon \dist(x, \, X_{\varepsilon_n}) \leq \varepsilon_n^\alpha \right\}
\]
and split the proof of \eqref{n s 1} in two cases.

\setcounter{case}{0}
\begin{case}[Estimate out of $Y_n$] Let $x\in\Omega_{i, n}\setminus Y_n$.
Then, by Propositions~\ref{prop: palle} and~\ref{prop: chen struwe} we have
\[
 e_{\varepsilon_n}(\un)(x) \leq C_\alpha \varepsilon_n^{4\alpha} .
\]
Since $\abs{\nabla\vn} \leq C \abs{\nabla\un}$,
this entails
\[
 \int_{\Omega_{i, n}} \dn\abs{\nabla \vn}^2 \leq C \int_{\Omega_{i, n}} \dn\abs{\nabla \un}^2 \leq C \varepsilon_n^{1 - 6\alpha}
\]
which implies \eqref{n s 1} if we choose $\alpha < 1/6$.
\end{case}

\begin{case}[Estimate on $Y_n$]
We apply the H\"older inequality:
\begin{equation}
 \int_{Y_n} \dn\abs{\nabla \vn}^2 \leq C \int_{Y_n} \dn\abs{\nabla \un}^2\leq C \norm{\dn}_{L^2(Y_n)} \norm{\nabla \un}_{L^4(Y_n)}^2  .
\label{n s 2}
\end{equation}
The norm of the gradient is estimated by the Gagliardo Niremberg interpolation inequality and standard elliptic regularity results. We obtain
\[
 \norm{\nabla \un}_{L^4(Y_n)} \leq C\norm{\Delta \un}_{L^2(Y_n)}^{1/2} \norm{\un}_{L^\infty(Y_n)}^{1/2}  ,
\]
which reduces to
\begin{equation}
\norm{\nabla \un}_{L^4(Y_n)} \leq C\varepsilon_n^{-1} \norm{\nabla_u f(\un)}_{L^2(Y_n)}^{1/2}
 \label{n s 3}
\end{equation}
since $\un$ verifies the Equation \eqref{EL} and its $L^\infty$ norm is bounded by Lemma~\ref{lemma: L infty}. For a fixed $v\in \NN$, a Taylor expansion of f around the point $\pi(v)$ (see Remark~\ref{remark: f}) yields
\begin{equation} \label{n s 4}
 \abs{D f(v)} \leq M_0 \dist(v, \, \NN)  .
\end{equation}
Thus, combining the Equations \eqref{n s 2}, \eqref{n s 3} and \eqref{n s 4} with \ref{hp: df}, we infer
\[
 \int_{Y_n} \dn\abs{\nabla \vn}^2 \leq C \varepsilon_n^{-2} \norm{\dn}_{L^2(Y_n)}^2 \leq M_0 \varepsilon_n^{-2} \int_{Y_n} f(\un)  .
\]
Finally, since $Y_n$ is a finite union of balls of radius $\varepsilon^\alpha$, Lemma~\ref{lemma: local estimate} implies the desired estimate \eqref{n s 1}, for a constant depending on $\alpha$. \endproof
\end{case}

Proposition~\ref{prop: upper estimate} follows now easily from Lemmas~\ref{lemma: energy estimate} and~\ref{lemma: near singularities}, with the help of \eqref{kappa n}.
Let us point out some consequences of the previous results. For a fixed a compact set $K\subseteq \Omega' \setminus \left\{a_i\right\}_{1 \leq i \leq N}$, we know by 
Proposition~\ref{prop: upper estimate} that
\begin{equation} \label{voila}
\frac12 \int_K \abs{\nabla{\un}}^2 \leq C_K  , \qquad \int_K \dist^2(\un, \, \NN) \leq C \varepsilon_n^2
\end{equation}
at least for $n \geq N_K$. Hence, up to a renamed subsequence, by a diagonal procedure we can assume
\[
\un \to u_0 \qquad \textrm{a.e. and weakly in } H^1_{\textrm{loc}}(\Omega' \setminus \{a_1, \, \ldots, \, a_{N}\})  .
\]
Passing to the limit in the second condition of \eqref{voila}, by Fatou's lemma we deduce that
\[
u_0(x) \in \NN \qquad \textrm{for a.e. } x\in \Omega' \setminus \{a_1, \, \ldots, \, a_{N}\}  .
\]
We are now in position to prove that the points $a_i$, for $i \in \{1, \, \ldots, \, N\}$, do not belong to the boundary of $\Omega$. As a byproduct of the proof, we obtain a condition for the quantities $\kappa_{i, n}$.

\begin{prop}
 For all $i \in \{1, \, \ldots, \, N\}$, the point $a_i$ is in the interior of $\Omega$. In addition, it holds that
 \begin{equation} \label{quantizzazione}
  \sum_{i = 1}^{N} \kappa_{i, n} = \kappa_* .
 \end{equation}
\end{prop}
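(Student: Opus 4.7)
The plan is to derive both statements — the identity $\sum_{i=1}^N \kappa_{i,n} = \kappa_*$ and the interiority of each $a_i$ — by matching the local lower bound near each singularity against the global upper bound on the total energy, so that saturation of the latter forces both conclusions.

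For the identity, shrink $\rho$ if necessary so that the balls $B(a_i,\rho)$ are pairwise disjoint and contained in $\Omega'$. Summing the inequality of Lemma~\ref{lemma: near singularities} over $i$ (the $\Omega_{i,n}$ being pairwise disjoint) yields
\[
\frac12\int_{\bigcup_i \Omega_{i,n}} \big|\nabla \overline{\un}\big|^2 \;\geq\; \Big(\sum_{i=1}^{N}\kappa_{i,n}\Big)\Big(\log\frac{\rho}{\varepsilon_n}-C\Big) - NC.
\]
On the other hand, since $\overline g$ is smooth and $\NN$-valued on $\Omega'\setminus\Omega$, one has $\frac12\int_{\Omega'\setminus\Omega}|\nabla\overline g|^2 \leq C$ independently of $n$; combined with Lemma~\ref{lemma: energy estimate} this gives $\frac12\int_{\Omega'}|\nabla\overline{\un}|^2 \leq \kappa_*|\log\varepsilon_n| + C$. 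Subtracting, dividing by $|\log\varepsilon_n|$, and letting $n\to\infty$, I get $\limsup_n \sum_i\kappa_{i,n}\leq\kappa_*$, which combined with the reverse inequality \eqref{kappa n} shows $\sum_i\kappa_{i,n}\to\kappa_*$. Because $\NN$ is compact, the set $\{\gamma\in\Gamma(\NN) : \lambda_*(\gamma)\leq\kappa_*+1\}$ is finite, so each $\kappa_{i,n}$ takes only finitely many values; the limit is therefore attained, giving $\sum_i \kappa_{i,n}=\kappa_*$ for $n$ large enough.

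To rule out $a_i\in\partial\Omega$, I argue by contradiction: if such $i$ existed with $\kappa_{i,n}>0$, I will show $\kappa_{i,n}=0$, a contradiction with our standing assumption. Here is precisely where the enlargement $\Omega'$ pays off: on $B(a_i,\rho)\setminus\Omega$, a simply connected cap adjacent to $\partial\Omega$ near $a_i$, the map $\overline{\un}=\overline g$ is smooth and $\NN$-valued, providing an explicit smooth $\NN$-valued filling of the ``outside'' of $a_i$. This filling lets one freely deform, rel.\ endpoints, the outer arc $\overline g|_{\partial B(a_i,\rho)\setminus\Omega}$ of $\overline{\vn}|_{\partial B(a_i,\rho)}$ into the boundary arc $g|_{\partial\Omega\cap B(a_i,\rho)}$, which collapses to the constant $g(a_i)$ as $\rho\to 0$. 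Consequently the free homotopy class of $\overline{\vn}|_{\partial B(a_i,\rho)}$ coincides with that of $\vn$ along the inner arc, closed up by a short arc of $g$ near the constant $g(a_i)$; applying Lemma~\ref{lemma: extension} to the inner half-disk punctured at the defect balls then forces the trivial class $\mathbf{\epsilon}$ to belong to $\prod_{j\in\Lambda_i}\eta_{j,n}$, whence $\kappa_{i,n}\leq\lambda_*(\mathbf{\epsilon})=0$.

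The main obstacle is the last paragraph: precisely identifying the free homotopy class of $\overline{\vn}|_{\partial B(a_i,\rho)}$ with that of a loop close to the constant $g(a_i)$ requires the topological filling supplied by $\overline g$ across $\partial\Omega$, which is the whole point of enlarging $\Omega$ to $\Omega'$. The identity step is, by contrast, a straightforward energy accounting combining Lemmas~\ref{lemma: energy estimate} and~\ref{lemma: near singularities}.
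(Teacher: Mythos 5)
Your first paragraph (the identity) is fine, and it is in fact a more direct route than the paper's: you get $\sum_i\kappa_{i,n}\leq\kappa_*+o(1)$ by summing Lemma~\ref{lemma: near singularities} against Lemma~\ref{lemma: energy estimate}, and then use \eqref{kappa n} plus discreteness of the possible values of $\lambda_*$ (this finiteness is true but deserves the geodesic-counting justification, not just ``$\NN$ is compact''), obtaining the equality for $n$ large. The paper instead obtains \eqref{quantizzazione} as a byproduct of the boundary-exclusion argument. The genuine problem is your second part.

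The topological ``filling'' argument does not exclude boundary singularities, and cannot: there is no purely topological obstruction to a nontrivial defect concentrating at a point of $\partial\Omega$. Applying Lemma~\ref{lemma: extension} to the punctured half-disk $(B(a_i,\rho)\cap\Omega)\setminus\bigcup_{j\in\Lambda_i}B(x_j^{\varepsilon_n},\lambda_0\varepsilon_n)$ only tells you that the free homotopy class of $\pi(\un)$ restricted to the outer boundary component --- the arc $\partial B(a_i,\rho)\cap\Omega$ closed up by the arc of $g$ on $\partial\Omega\cap B(a_i,\rho)$ --- belongs to $\prod_{j\in\Lambda_i}\eta_{j,n}$; the cap homotopy through $\overline g$ then identifies this class with that of the extended map on the full circle $\partial B(a_i,\rho)$, but nothing makes it trivial. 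The near-constancy of $g$ on the short boundary arc is irrelevant, because all the winding can take place along the inner arc: think of a degree-one Ginzburg--Landau vortex sitting at distance $o(1)$ from $\partial\Omega$ --- the nonzero degree on the half-circle is perfectly compatible with an almost constant datum on the adjacent boundary segment. So the step ``forces $\mathbf{\epsilon}\in\prod_{j\in\Lambda_i}\eta_{j,n}$'' is unjustified, and the interiority of the $a_i$ is left unproven. The correct mechanism is energetic, and this is what the paper does: near a boundary point only a half-disk is available, so in the Sandier-type lower bound the angular variable spans an interval of length $\pi+o_{\rho\to0}(1)$ and the coefficient doubles, giving $\frac12\int_{\Omega\setminus\cup_i B(a_i,\rho)}\abs{\nabla u_0}^2\geq\bigl(\sum_i\alpha_i\kappa_{i,n}\bigr)(1+o_{\rho\to 0}(1))\abs{\log\rho}-C$ with $\alpha_i=2$ when $a_i\in\partial\Omega$; comparing with the upper bound $\kappa_*\abs{\log\rho}+C$ from Proposition~\ref{prop: upper estimate}, with \eqref{kappa n}, and with $\kappa_{i,n}>0$, one concludes $\alpha_i=1$ for every $i$. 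Some version of this quantitative half-disk estimate is needed to complete your proof.
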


\begin{proof}
We adapt the proof of \cite[Lemma 3]{BBH}; the reader may see also \cite{Ch}.

Assume, by contradiction, that $a_i = 0\in\partial\Omega$ for some $i \in\{ 1, \, \ldots, \, N\}$. Then, computing in polar coordinates as we did in Lemma~\ref{lemma: sand}, we obtain the inequality
\[
 \frac12 \int_{\Omega\cap (B_{\rho} \setminus B_{\varepsilon})} \abs{\nabla u_0}^2 \geq \frac{\lambda(\eta)^2}{2\pi} (1 + o_{\rho\rightarrow 0}(1)) \log\frac{\rho}{\varepsilon}
\]
in place of \eqref{sand 1}. The factor approximately equal to $(2\pi)^{-1}$, instead of $(4\pi)^{-1}$, is due to the angular variable, which spans an interval of length 
$\pi + o_{\rho \rightarrow 0}(1)$. Arguing as in \cite[Theorem]{Sand}, we can conclude
\begin{equation} \label{border 1}
  \frac12 \int_{\Omega\setminus \cup_i B(a_i, \, \rho)} \abs{\nabla u_0}^2 \geq \left(\sum_{i = 1}^{N}\alpha_i \kappa_{i, n}\right) (1 + o_{\rho\rightarrow 0}(1)) \abs{\log\rho} 
  - C \, 
\end{equation}
for a radius $\rho > 0$ small enough, so that the balls $B(a_i, \, \rho)$ are mutually disjoint, and the coefficients $\alpha_i$ are given by
\[
 \alpha_i := \begin{cases}
             1 & \textrm{if } a_i\notin \partial\Omega \\
             2 & \textrm{if } a_i \in\partial\Omega  .
            \end{cases}
\]
On the other hand, the weak $H^1_{\textrm{loc}}$ convergence of $\un$ and Proposition~\ref{prop: upper estimate} imply
\begin{equation}\label{border 2}
 \frac12 \int_{\Omega\setminus \cup_i B(a_i, \, \rho)} \abs{\nabla u_0}^2 \leq \liminf_{n\rightarrow +\infty} E_{\varepsilon_n} (\un, \, \Omega\setminus \cup_i B(a_i, \, \rho)) \leq
 \kappa_* \abs{\log\rho} + C  .
\end{equation}
Combining \eqref{border 1} and \eqref{border 2}, dividing by $\abs{\log\rho}$ then passing to the limit as $\rho\to 0$, we deduce
\[
 \sum_{i = 1}^{N}\alpha_i \kappa_{i, n} \leq \kappa_*  .
\]
In view of the inequality \eqref{kappa n}, we have
\[
 \sum_{i = 1}^{N}\alpha_i \kappa_{i, n} = \kappa_*
\]
and, since $\kappa_{i, n} > 0$ for all $i$ and $n$, it must be $\alpha_i = 1$ for all $i$, that is, the points $a_i$ do not belong to the boundary. The equality \eqref{quantizzazione} also follows.
\end{proof}

\subsection{Proof of Theorem~\ref{th: intro-convergence}}

The proof is, essentially, a refined version of the argument we used for Proposition~\ref{prop: chen struwe}. Since the result we want to prove is local, we fix a closed disk $D\subseteq \Omega'\setminus \{a_1, \, \ldots, a_N\}$ and restrict our attention to $D$. ($D$ may intersect the boundary of $\Omega$).

By Proposition~\ref{prop: upper estimate}, and changing the radius of the disk if necessary, we can assume that
\[
\int_{\partial D} \left\{ \frac12 \abs{\nabla \un}^2 + \frac{1}{\varepsilon^2_n} f(\un) \right\} \leq C .
\]
Due to the compact inclusion $H^1(\partial D) \hookrightarrow C^0(\partial D)$, we have the uniform convergence $\un \to u_0$ on $\partial D$. We perform the same construction of Lemma~\ref{lemma: comparison}, and we obtain a sequence $\omega_{\varepsilon_n}\colon D \to \NN$ of minimizing harmonic maps, and another sequence $\widetilde\omega_{\varepsilon_n}\colon D \to \R^d$ such that 
\[
\left. \omega_{\varepsilon_n} \right|_{\partial D} = \left. \pi(\un)\right|_{\partial D}  , \qquad \left. \widetilde\omega_{\varepsilon_n} \right|_{\partial D} = \left. \un\right|_{\partial D}
\]
and
\begin{equation} \label{bound omega tilde}
E_{\varepsilon_n}(\widetilde\omega_{\varepsilon_n}; \, D) \leq \frac12 \left(1 + o_{n\to+\infty}(1) \right) \norm{\nabla \omega_{\varepsilon_n}}^2_{L^2(D)} + C\varepsilon_n
\end{equation}
(compare with \eqref{bound grad v_n}, \eqref{bound v_n}). The functions $\omega_{\varepsilon_n}$ are bounded in $H^1(D)$, since $\un$ are, hence we can apply the strong compactness result of \cite{Luck} and deduce, up to subsequences,
\begin{equation} \label{strong 1}
\omega_{\varepsilon_n} \to \omega_0 \qquad \textrm{strongly in } H^1(D) ,
\end{equation}
where $\omega_0$ is a minimizing harmonic map. Passing to the limit in the boundary condition for $\omega_{\varepsilon_n}$, we see that~$\left. \omega_0\right|_{\partial D} = \left. u_0\right|_{\partial D}$. 

As $\{\un\}_{n\in\N}$ converges weakly in $H^1(D)$, we deduce
\[ 
\frac12 \norm{\nabla u_0}_{L^2(D)}^2 \leq \frac12 \liminf_{n\to+\infty} \norm{\nabla \un }_{L^2(D)}^2
\]
but, on the other hand, \eqref{bound omega tilde} and \eqref{strong 1} give
\[
\frac12 \limsup_{n\to+\infty}  \norm{\nabla \un }_{L^2(D)}^2 \leq \limsup_{n\to+\infty} E_{\varepsilon_n} (\un; \, D) \leq \frac12 \norm{\nabla \omega_0}_{L^2(D)}^2 \leq  \frac12 \norm{\nabla u_0}_{L^2(D)}^2 .
\]
These inequalities, combined, yield
\[
\lim_{n\to+\infty}\norm{\nabla \un}_{L^2(D)} = \frac12 \norm{\nabla u_0 }_{L^2(D)}^2 = \frac12  \norm{\nabla \omega_0 }_{L^2(D)}^2.
\]
As a consequence, the convergence $\un\to u_0$ holds in $H^1(D)$ and the limit map $u_0$ is minimizing harmonic. In particular, $u_0$ solves the harmonic map equation in $D$, and the regularity theory of Morrey (see \cite{Morrey}) applies, entailing $u_0\in C^\infty(D)$. Also, as a byproduct of this argument, we obtain
\begin{equation} \label{f va a zero}
\frac{1}{\varepsilon_n^2}\int_D f(\un) \to 0 \qquad \textrm{as } n\to +\infty .
\end{equation}

Finally, we check the locally uniform convergence. Owning to the strong convergence in $H^1(D)$ and \eqref{f va a zero}, for all $\eta > 0$ we can find a radius $r > 0$, such that the inequality
\[
\int_{B(x_0, \, r)} e_{\varepsilon_n}(\un) \leq \eta
\]
holds for all $x_0\in \frac12 D$ and all $n\in\N$. Then, choosing $\eta$ small enough, we apply the Chen and Struwe's result, to infer
\[
e_{\varepsilon_n}(\un)(x_0) \leq E_{\varepsilon_n}(\un ; \, D) \leq C \qquad \textrm{for all } x\in \frac12 D  .
\]
This provides a bound for $\un$ in $W^{1, \, \infty}(D)$, which allows us to conclude the proof, by means of the Ascoli-Arzel\`a theorem.

 \section{The behavior of $u_0$ near the singularities}
 \label{sect: singularity}

 In this section, we analyze the behavior of $u_0$ near the singularities: our aim is to prove Proposition~\ref{prop: intro-geodetica}. As we already mentioned in the Introduction, we consider here just the case $\NN = \PR$. This provide a remarkable simplification in the arguments, due to the simple homotopic structure of the real projective plane, whose fundamental group consists of two elements only. Hence, there is a unique class of non homotopically trivial loops.
 
 This property reflects on the structure of the limit map. Remind that, for all $i$ and $n$, we have set
 \[
  \kappa_{i, n} = \lambda_*\left( \prod_{j\in\Lambda_i} \eta_{j, n}\right) ,
 \]
 where $\eta_{j, n}$ is the free homotopy class of $\pi(\un)$, restricted to $\partial B(x_j^{\varepsilon_n}, \, \lambda_0\varepsilon_n)$. 
 It follows from Lemma~\ref{lemma: extension} that $\prod_{j\in\Lambda_i} \eta_{j, n}$ is the homotopy class of $\pi(\un)$ restricted to $\partial B(a_i, \, \rho)$, for a small radius $\rho > 0$. Since the homotopy class is stable by uniform convergence, from Theorem~\ref{th: intro-convergence} we deduce
 \[
  \kappa_{i, n} = \lambda_* \left(\textrm{homotopy class of }  \left. u_0 \right|_{\partial B(a_i, \, \rho)}\right)  ,
 \]
 that is, $\kappa_{i, n}$ is independent of $n$.
 On the other hand, there is a unique non zero value that $\kappa_{i, n}$ and $\kappa_*$ can assume, corresponding to the unique class of non trivial loops. As a consequence, from \eqref{quantizzazione} we infer that that there is at most one index $i$ such that $\kappa_{i, n} \neq 0$, and we prove the following
 
 \begin{lemma}
  In case $\NN \simeq \PR$, there exists a point $a\in\Omega$ such that $u_0\in C^\infty(\Omega\setminus\{a\})$.
 \end{lemma}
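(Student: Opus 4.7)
The plan is to deduce that $N = 1$ directly from the quantization identity \eqref{quantizzazione}, exploiting the very special homotopic structure of $\PR$. Since $\pi_1(\PR) \simeq \mathbb{Z}/2\mathbb{Z}$ is abelian of order two, the set $\Gamma(\PR)$ of free homotopy classes reduces to just the trivial class $\mathbf{\epsilon}$ and a unique non-trivial class $\gamma$, so the functional $\lambda_*$ takes only the two values $0$ and $\ell := \lambda_*(\gamma) > 0$. Moreover, since the boundary datum is assumed non-trivial, the product $\prod_{i=1}^k \gamma_i$ of the boundary classes reduces to the singleton $\{\gamma\}$, which forces $\kappa_* = \ell$.

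I would then combine this observation with two facts already recorded in the excerpt. First, as noted just before the statement of the lemma, each $\kappa_{i,n}$ equals $\lambda_*$ of the free homotopy class of $u_0$ restricted to $\partial B(a_i, \rho)$; in particular, it is independent of $n$ and belongs to $\{0, \ell\}$. Second, the reduction explained at the beginning of Subsection~\ref{subsect: localizing singularities} allows us to assume that $\kappa_{i,n} > 0$ for every $i$, so necessarily $\kappa_{i,n} = \ell = \kappa_*$ for every $i$. The quantization identity \eqref{quantizzazione} then yields $N \kappa_* = \kappa_*$, and since $\kappa_* > 0$ we conclude $N = 1$.

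Setting $a := a_1$, the membership $a \in \Omega$ is the content of the proposition proved at the end of Subsection~\ref{subsect: localizing singularities} (which ruled out boundary singularities), and the smoothness $u_0 \in C^\infty(\Omega \setminus \{a\})$ is then immediate from Theorem~\ref{th: intro-convergence}. I do not anticipate any real technical obstacle: the whole lemma is essentially a pigeonhole consequence of the $\mathbb{Z}/2\mathbb{Z}$ structure of $\pi_1(\PR)$ combined with the quantization identity \eqref{quantizzazione}. The only point that deserves some care is the identification $\kappa_* = \ell$, which relies on the non-triviality of the boundary datum; if that hypothesis were dropped, the statement would have to be adjusted, since then the WLOG reduction would discard \emph{every} $a_i$ and $u_0$ would be smooth on the whole of $\Omega$.
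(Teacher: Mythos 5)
Your proposal is correct and follows essentially the same route as the paper: the two-valued nature of $\lambda_*$ on $\Gamma(\PR)$ (coming from $\pi_1(\PR)\simeq\mathbb Z/2\mathbb Z$), combined with the quantization identity \eqref{quantizzazione} and the earlier reduction to $\kappa_{i,n}>0$, forces at most one singular point, with interiority given by the proposition at the end of Section~\ref{sect: convergence} and smoothness by Theorem~\ref{th: intro-convergence}. The only cosmetic difference is that the paper does not invoke non-triviality of the boundary datum and simply concludes that at most one $\kappa_{i,n}$ is non-zero; accordingly, your final caveat is unneeded, since in the homotopically trivial case $u_0$ is smooth on all of $\Omega$ and the statement as written still holds for any choice of $a$.
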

 
 Assume now that the boundary datum is non homotopically trivial. Up to a translation, we can suppose that the unique singular point of $u_0$ is the origin, and we fix a radius $r > 0$ such that $B(0, \, r) \subseteq \Omega$. We also introduce the functions $R, S\in C^\infty(0, \, r)$ by
 \[
  R(\rho) := \frac12 \int_{\partial B_\rho} \abs{\frac{\partial u_0}{\partial\nu}}^2 \, \d\mathcal H^1 
 \]
 and
 \[
  S(\rho) := \frac{1}{2\rho} \int_{\partial B_\rho} \abs{\nabla_T u_0}^2 \, \d\mathcal H^1 
 \]
 where $\nabla_T$ denotes the tangential derivation. These functions are obviously non negative; in fact, $S$ is bounded by below by $\kappa_*$. Indeed, by definition of $\lambda$ we have for all $\rho\in (0, \, r)$
 \[
  4\pi S(\rho) = 2\pi \int_0^{2\pi} \abs{c_\rho'(\theta)}^2  \, \d\theta \geq \lambda^2(\gamma)  ,
 \]
 where $c_\rho$ is the function considered in Proposition~\ref{th: intro-convergence}, and
 \[
  S(\rho) \geq \frac{\lambda^2(\gamma)}{4\pi} = \kappa_*  .
 \]

 \begin{lemma} \label{lemma: singularity 1}
  The function $\rho \mapsto \rho^{-1}(S(\rho) - \kappa_*)$ is summable over $(0, \, r)$. In particular,
  \[
   \liminf_{\rho\to 0}S(\rho) = \kappa_*  .
  \]
 \end{lemma}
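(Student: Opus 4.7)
The plan is to exploit the Pohozaev identity (Lemma~\ref{lemma: pohoz}, with $f \equiv 0$) applied to the smooth harmonic map $u_0$ on annuli $B_{\rho_2}(a) \setminus B_{\rho_1}(a)$ centered at the singular point $a$ (placed at the origin), $0 < \rho_1 < \rho_2 \leq r$. Taking $x_0 = a$ and observing that $(x - a)\cdot\tau = 0$ on each boundary circle while $(x-a)\cdot\nu = \pm\rho_j$, together with the polar identity $\frac{1}{2}\int_{\partial B_\rho}\abs{\partial_\tau u_0}^2 \, \d\mathcal H^1 = S(\rho)/\rho$, the identity collapses to a conservation law
\[
 \rho R(\rho) - S(\rho) = C_0 \qquad \text{for all } \rho \in (0, r],
\]
for some constant $C_0 \in \R$. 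A polar decomposition of the energy on the annulus, combined with this identity to eliminate $R$, will then yield
\[
\mathcal{E}(\rho) := \frac{1}{2}\int_{B_r \setminus B_\rho}\abs{\nabla u_0}^2 = 2\int_\rho^r \frac{S(t)}{t} \, \d t + C_0 \log(r/\rho).
\]

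Next I would bring in the asymptotic upper energy estimate. Passing to the limit $n \to \infty$ in Proposition~\ref{prop: upper estimate} and invoking the strong $H^1_{\textrm{loc}}$ convergence from Theorem~\ref{th: intro-convergence}, we get $\mathcal{E}(\rho) \leq \kappa_* \abs{\log\rho} + C$ for $\rho$ small. Introducing
\[
 I(\rho) := 2\int_\rho^r \frac{S(t) - \kappa_*}{t} \, \d t = \mathcal{E}(\rho) - (C_0 + 2\kappa_*)\log(r/\rho),
\]
the upper bound turns into $I(\rho) \leq -(C_0 + \kappa_*)\log(r/\rho) + C'$. Since $S \geq \kappa_*$ forces $I(\rho) \geq 0$, dividing by $\log(r/\rho) \to +\infty$ and letting $\rho \searrow 0$ yields the first one-sided inequality $C_0 + \kappa_* \leq 0$.

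Conversely, the elementary lower bound $R \geq 0$ combined with the conservation law gives $S(\rho) \geq -C_0$, hence $\mathcal{E}(\rho) \geq -C_0 \log(r/\rho)$; comparing once more with the upper bound on $\mathcal{E}$ yields $-C_0 \leq \kappa_*$, i.e.\ $C_0 + \kappa_* \geq 0$. The two one-sided estimates together pin $C_0$ to the value $-\kappa_*$, and substituting back into the bound for $I$ gives the uniform estimate $0 \leq I(\rho) \leq C'$. Monotone convergence then supplies $\int_0^r (S(t) - \kappa_*)/t \, \d t < +\infty$, which is the claimed summability; the $\liminf$ statement follows by contradiction, since $\liminf_{\rho\to 0} S(\rho) > \kappa_*$ would give $(S(t) - \kappa_*)/t \geq c/t$ near zero for some $c > 0$, in conflict with integrability. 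The most delicate step will be the matching of the inequalities $C_0 + \kappa_* \leq 0$ and $C_0 + \kappa_* \geq 0$, which crucially uses the sharpness of the constant $\kappa_*$ in the upper estimate of Proposition~\ref{prop: upper estimate}.
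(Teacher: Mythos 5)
Your argument is correct, but it takes a genuinely different (and heavier) route than the paper. The paper's proof is a two-line affair: it passes to the limit in Proposition~\ref{prop: upper estimate} to get $\frac12\int_{B_r\setminus B_{\rho_0}}\abs{\nabla u_0}^2\leq\kappa_*\abs{\log\rho_0}+C$, writes the left-hand side in polar coordinates as $\int_{\rho_0}^r\{R(\rho)+\rho^{-1}S(\rho)\}\,\d\rho$, and simply \emph{discards} the term $R\geq 0$; since $S\geq\kappa_*$, the uniform bound on $\int_{\rho_0}^r\rho^{-1}(S-\kappa_*)\,\d\rho$ and hence the summability and the $\liminf$ statement follow at once. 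You instead first establish the conservation law $\rho R(\rho)-S(\rho)=C_0$, use it to eliminate $R$ from the polar energy decomposition, and then pin down $C_0$ by matching the upper energy bound against the two elementary lower bounds $S\geq\kappa_*$ and $\rho R=S+C_0\geq 0$. This is sound, and it has a real payoff: you obtain $C_0=-\kappa_*$, i.e.\ the identity $R(\rho)=\rho^{-1}(S(\rho)-\kappa_*)$ of Lemma~\ref{lemma: singularity 2}, without ever using the $\liminf$ statement of the present lemma (which the paper's proof of Lemma~\ref{lemma: singularity 2} does use). The price is that your proof of the present lemma needs the Pohozaev-type identity, which the paper's does not; note also that only the inequality $C_0\geq-\kappa_*$ is actually needed for the summability, so your step $C_0+\kappa_*\leq 0$ is superfluous here.

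One point must be tightened: the conservation law cannot be obtained from Lemma~\ref{lemma: pohoz} ``with $f\equiv 0$'', because that lemma applies to solutions of the unconstrained equation \eqref{EL}, whereas $u_0$ is an $\NN$-valued harmonic map, so $\Delta u_0$ is in general a nonzero vector field normal to $\NN$ rather than zero. The identity still holds, but you must say why: the multiplier $(x-a)\cdot\nabla u_0$ is tangent to $\NN$ at $u_0(x)$, hence $\Delta u_0\cdot\partial_\nu u_0=0$ pointwise away from $a$, and integrating this (as the paper does in the proof of Lemma~\ref{lemma: singularity 2}) yields $\frac{\d}{\d\rho}\left(\rho R(\rho)-S(\rho)\right)=0$ on $(0,r)$. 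With that orthogonality made explicit, and with the smoothness of $u_0$ away from the single singular point (provided by Theorem~\ref{th: intro-convergence} and the lemma preceding this one), your proof goes through.
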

 \proof
 Let $0 < \rho_0 < \min\{r, \, 1\}$. With the help of Theorem~\ref{th: intro-convergence}, we can pass to the limit as $n\to +\infty$ in Proposition~\ref{prop: upper estimate}, to obtain
 \[
  \frac12 \int_{B_r(0) \setminus B_{\rho_0}(0)} \abs{\nabla u_0}^2 \leq \kappa_* \abs{\log\rho_0} + C 
 \]
 and, expressing the left-hand side in polar coordinates,
 \begin{equation} \label{singularity 1, 1}
  \int_{\rho_0}^r \left\{ R(\rho) + \frac{1}{\rho} S(\rho) \right\} \, \d\rho \leq \kappa_* \abs{\log\rho_0} + C  .
 \end{equation}
 Taking advantage of this bound, we compute
 \[
  \int_{\rho_0}^r \frac{1}{\rho}\left( S(\rho) - \kappa_* \right) \, \d\rho = \int_{\rho_0}^r \frac{S(\rho)}{\rho} \, \d\rho - \kappa_*\abs{\log\rho_0} - \kappa_* \log r \leq C  .
 \]
 Letting $\rho_0 \searrow 0$, we deduce the summability of $\rho \mapsto \rho^{-1}(S(\rho) - \kappa_*)$ which, in turns, forces the inferior limit of $S - \kappa_*$ to vanish.
 \endproof
 
 Proposition~\ref{prop: intro-geodetica} follows easily from this lemma. Indeed, we can pick a sequence $\rho_n\searrow 0$ such that $S(\rho_n)\to \kappa_*$: this is a minimizing sequence for the length-squared functional
 \[
  c \in H^1(S^1, \, \NN) \mapsto \frac{1}{2} \int_0^{2\pi} \abs{c'(\theta)}^2 \, \d\theta
 \]
 under the constraint that $c$ is not homotopically trivial, and hence, by the compact inclusion $H^1(S^1, \, \NN)\hookrightarrow C^0(S^1, \, \NN)$, it admits a subsequence uniformly converging to a minimizer, which is a geodesic. The continuous inclusion $H^1(S^1, \, \NN)\hookrightarrow C^{1/2}(S^1, \, \NN)$ and interpolation in H\"older spaces provide also the convergence in $C^\alpha$, for $\alpha \in (0, \, 1/2)$.
 
 We are not able to say whether the convergence holds for the whole family $\{c_\rho\}_{\rho > 0}$, because we are not able to identify the limit geodesic $c_\rho$. However, we state here some additional properties we have been able to prove about the functions $S$ and $R$, in the hope that they might be of interest for future work.
 
 \begin{lemma} \label{lemma: singularity 2}
  It holds that 
  \[
   R(\rho) = \frac{1}{\rho} \left( S(\rho) - \kappa_* \right)  .
  \]
 \end{lemma}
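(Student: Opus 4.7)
The key observation is that, since $u_0$ is a harmonic map on $B_r \setminus \{0\}$ (with values in $\NN$), the Pohozaev identity gives a conservation law on concentric annuli. Specifically, I will apply Lemma~\ref{lemma: pohoz} (which holds equally well for the harmonic map equation $\Delta u_0 \perp T_{u_0}\NN$, taking $f \equiv 0$: the Lagrange-multiplier term coming from the projection on $T_{u_0}\NN$ is orthogonal to $\partial_k u_0$ and drops out of the proof exactly as for the Landau--de Gennes equation) to the annulus $G = B_\rho \setminus B_{\rho_0}$, with $0 < \rho_0 < \rho < r$ and $x_0 = 0$. On both components of $\partial G$ one has $(x - x_0)\cdot\tau = 0$, while $(x - x_0)\cdot\nu$ equals $\rho$ on $\partial B_\rho$ and $-\rho_0$ on $\partial B_{\rho_0}$. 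A direct computation, using
\[
 \int_{\partial B_\rho}\abs{\nabla_T u_0}^2 \, \d\mathcal H^1 = \frac{2\, S(\rho)}{\rho} ,
\]
then reduces the identity to
\[
 \rho R(\rho) - S(\rho) = \rho_0 R(\rho_0) - S(\rho_0)  .
\]
In other words, the function $\rho \mapsto \rho R(\rho) - S(\rho)$ is constant on $(0, \, r)$; call its value $C_0$.

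It remains to show that $C_0 = -\kappa_*$, and this is the delicate part. For the lower bound $C_0 \leq -\kappa_*$, I invoke the integrability of $R$ on $(0, r)$. This follows from the energy estimate \eqref{singularity 1, 1}: since $S(\rho)/\rho \geq \kappa_*/\rho$, the contribution of $S/\rho$ already saturates the $\kappa_*\abs{\log\rho_0}$ term, leaving $\int_{\rho_0}^r R(\rho)\,\d\rho \leq C$ uniformly in $\rho_0$. If one had $C_0 > -\kappa_*$, then from $\rho R(\rho) = S(\rho) + C_0 \geq \kappa_* + C_0 > 0$ we would derive $R(\rho) \geq (\kappa_* + C_0)/\rho$, contradicting integrability.

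For the reverse bound $C_0 \geq -\kappa_*$, I use Lemma~\ref{lemma: singularity 1}: there exists a sequence $\rho_n \searrow 0$ with $S(\rho_n) \to \kappa_*$. Since $R \geq 0$, we have $\rho_n R(\rho_n) \geq 0$, so passing to the limit in $\rho_n R(\rho_n) = S(\rho_n) + C_0$ yields $\kappa_* + C_0 \geq 0$. Combining both inequalities gives $C_0 = -\kappa_*$, which is the claim. The main obstacle is really the first step, namely justifying that the Pohozaev identity may be applied to $u_0$ in the present form; this rests on the fact that, on any annulus staying away from $a$, $u_0$ is smooth and harmonic into $\NN$, so the calculation of Lemma~\ref{lemma: pohoz} (or equivalently, testing the equation against $x_k \partial_k u_0$ and integrating by parts) goes through verbatim with $f = 0$.
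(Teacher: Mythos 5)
Your proposal is correct and follows essentially the same route as the paper: the paper derives the conservation law $\frac{\d}{\d\rho}\left(\rho R(\rho) - S(\rho)\right) = 0$ by testing $\Delta u_0 \cdot \partial_\rho u_0 = 0$ against $\abs{x}^2$ in polar coordinates, which is exactly the Pohozaev identity with $f \equiv 0$ that you invoke, and it then identifies the constant as $-\kappa_*$ using \eqref{singularity 1, 1} together with Lemma~\ref{lemma: singularity 1} and $R \geq 0$, just as you do. Your upper bound for the constant via the uniform bound $\int_{\rho_0}^r R \leq C$ and non-integrability of $\rho^{-1}$ is merely a repackaging of the paper's comparison of the coefficients of $\abs{\log\rho_0}$, so no genuinely new argument is involved.
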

 \proof
 We claim that 
 \begin{equation} \label{singularity 2, 1}
  \frac{\d}{\d\rho} \left( \rho R(\rho) - S(\rho)\right) = 0  .
 \end{equation}
 This equality is essentially a consequence of the Pohozaev identity for the harmonic maps, but here we will present its proof in a slightly different form.
 Since $u_0$ is harmonic away from $0$, its Laplacian $\Delta u_0$ is, at every point, a normal vector to $\NN$. Thus, for each point $x\in \Omega\setminus \{0\}$ we have
 \[
 \left(\Delta u_0 \cdot \frac{\partial u_0}{\partial\nu}\right) (x) = 0  ,
 \]
 where $\nu = x/\abs{x}$. We multiply the previous identity by $\abs{x}^2$, pass to polar coordinates, and integrate with respect to $\theta\in[0, \, 2\pi]$, for a fixed $\rho\in [0, \, r]$. This yields
 \[
 \rho^2 \int_0^{2\pi} \frac{\partial^2 u_0}{\partial\rho^2}\cdot \frac{\partial u_0}{\partial \rho} \, \d\theta + \rho \int_0^{2\pi} \abs{\frac{\partial u_0}{\partial\rho}}^2 \, \d\theta +
 \int_0^{2\pi} \frac{\partial^2 u_0}{\partial \theta^2} \cdot \frac{\partial u_0}{\partial\rho} \, \d\theta = 0
 \]
 and, after an integration by parts in the third term,
 \[
  \frac{\rho^2}{2} \frac{\d}{\d\rho} \int_0^{2\pi} \abs{\frac{\partial u_0}{\partial \rho}}^2 \, \d\theta +  \rho \int_0^{2\pi} \abs{\frac{\partial u_0}{\partial\rho}}^2 \, \d\theta -
  \frac12 \frac{\d}{\d\rho} \int_0^{2\pi} \abs{\frac{\partial u_0}{\partial\theta}}^2 \, \d\theta = 0  .
 \]
 This equality can be rewritten as
 \[
  \rho^2 \frac{\d}{\d\rho} \left( \frac{R(\rho)}{\rho} \right) + 2R(\rho) - \frac{\d}{\d\rho} S(\rho) = 0  ,
 \]
 from which we deduce \eqref{singularity 2, 1}. Our claim is proved.

 As a consequence of \eqref{singularity 2, 1}, there exists a constant $k$ such that
 \[
  R(\rho) = \frac{1}{\rho} \left(S(\rho) + k\right)  ,
 \]
 and the lemma will be proved once we have identified the value of $k$. To do so, fix $0 < \rho_0 < \min\{r, \, 1\}$ and notice that \eqref{singularity 1, 1} implies
 \[
  \kappa_* \abs{\log\rho_0} + C \geq \int_{\rho_0}^r \frac{1}{\rho} \left( 2S(\rho) + k \right) \, \d\rho = \int_{\rho_0}^r \frac{2}{\rho} \left( S(\rho) - \kappa_* \right) \, \d\rho + \int_{\rho_0}^r \frac{1}{\rho} (2\kappa_* + k) \, \d\rho
 \]
 The first integral at the right-hand side is non negative, since $S \geq \kappa_*$. Therefore, for small values of $\rho_0$,
 \[
 \kappa_* \abs{\log\rho_0} + C \geq (2\kappa_* + k)\abs{\log\rho_0} - C
 \]
 and, comparing the coefficients of the leading terms, we have $\kappa_* \geq 2\kappa_* + k$, that is, $k \leq - \kappa_*$. On the other hand,
 \[
  0 \leq \rho R(\rho) = S(\rho) + k
 \]
 and, taking the inferior limit as $\rho \searrow 0$, by Lemma~\ref{lemma: singularity 1} we infer $0 \leq \kappa_* + k$, which provides the opposite inequality $k \geq -\kappa_*$.
 \endproof
 
 Lemmas~\ref{lemma: singularity 1} and~\ref{lemma: singularity 2} combined imply that $R \in L^1(0, \, r)$. 
 
 \begin{remark}
  If we knew that $R$ has better integrability properties, for instance $R\in L^p$ for some $p > 1$ (or even $R^{1/2}\in L^{(2, \, 1)}$), then we could conclude the convergence of the whole family $\{c_\rho\}_{\rho > 0}$, at least in $L^1(S^1, \, \NN)$. Indeed, applying the fundamental theorem of calculus, the Fubini-Tonelli theorem, and the H\"older inequality, we would obtain
  \[
  \norm{c_{\rho_1} - c_{\rho_2}}_{L^1(S^1)} \leq \int_0^{2\pi} \d\theta \int_{\rho_1}^{\rho_2} \d\rho \abs{\frac{\partial u_0}{\partial\rho}} \leq \int_{\rho_1}^{\rho_2} \d\rho (2\pi\rho)^{1/2} \left\{\int_0^{2\pi} \d\theta \abs{\frac{\partial u_0}{\partial\rho}}^2 \right\}^{1/2}
  \]
  and hence
  \[
   \norm{c_{\rho_1} - c_{\rho_2}}_{L^1(S^1)} \leq \int_{\rho_1}^{\rho_2} \left(\frac{4\pi R(\rho)}{\rho}\right)^{1/2} \, \d\rho  ,
  \]
  where the right-hand side converges to zero as $\rho_1, \, \rho_2 \to 0$, again by the H\"older inequality. Thus, $\{c_\rho\}_{\rho > 0}$ would be a Cauchy sequence in $L^1(S^1, \, \NN)$.
 \end{remark}

\textbf{Acknowledgements.} The author would like to thank professor Fabrice Bethuel for bringing the problem to his attention and numerous helpful discussions, as well as professor Arghir Zarnescu for his interesting remarks and Jean-Paul Daniel for his help in proofreading.

\bibliographystyle{acm}
\bibliography{amsart_biaxiality_2d}

\begin{thebibliography}{10}

\bibitem{BM}
{\sc Ball, J.~M., and Majumdar, A.}
\newblock Nematic liquid crystals: from maier--saupe to a continuum theory.
\newblock 1--11.

\bibitem{Cetraro}
{\sc Bethuel, F.}
\newblock Variational methods for {G}inzburg-{L}andau equations.
\newblock In {\em Calculus of variations and geometric evolution problems
  ({C}etraro, 1996)}, vol.~1713 of {\em Lecture Notes in Math.} Springer,
  Berlin, 1999, pp.~1--43.

\bibitem{BBH2}
{\sc Bethuel, F., Brezis, H., and H{\'e}lein, F.}
\newblock Asymptotics for the minimization of a {G}inzburg-{L}andau functional.
\newblock {\em Calc. Var. Partial Differential Equations 1}, 2 (1993),
  123--148.

\bibitem{BBH}
{\sc Bethuel, F., Brezis, H., and H{\'e}lein, F.}
\newblock {\em Ginzburg-{L}andau vortices}.
\newblock Progress in Nonlinear Differential Equations and their Applications,
  13. Birkh\"auser Boston Inc., Boston, MA, 1994.

\bibitem{BR}
{\sc Bethuel, F., and Rivi{\`e}re, T.}
\newblock Vortices for a variational problem related to superconductivity.
\newblock {\em Ann. Inst. H. Poincar\'e Anal. Non Lin\'eaire 12}, 3 (1995),
  243--303.

\bibitem{Camp}
{\sc Campaigne, H.}
\newblock Partition hypergroups.
\newblock {\em Amer. J. Math. 62\/} (1940), 599--612.

\bibitem{ChStr}
{\sc Chen, Y.~M., and Struwe, M.}
\newblock Existence and partial regularity results for the heat flow for
  harmonic maps.
\newblock {\em Math. Z. 201}, 1 (1989), 83--103.

\bibitem{Ch}
{\sc Chiron, D.}
\newblock {\em \'Etude math\'ematique de mod\`eles issus de la physique de la
  mati\`ere condens\'ee}.
\newblock PhD thesis, Universit\'e Pierre et Marie Curie--Paris 6, 2004.

\bibitem{Diat}
{\sc Dietzman, A.~P.}
\newblock On the multigroups of complete conjugate sets of elements of a group.
\newblock {\em C. R. (Doklady) Acad. Sci. URSS (N.S.) 49\/} (1946), 315--317.

\bibitem{GM}
{\sc Gartland~Jr, E.~C., and Mkaddem, S.}
\newblock Instability of radial hedgehog configurations in nematic liquid
  crystals under {L}andau--de {G}ennes free-energy models.
\newblock {\em Phys. Rev. E 59}, 1 (1999), 563--567.

\bibitem{GolMon}
{\sc Golovaty, D., and Montero, A.}
\newblock On minimizers of the landau-de gennes energy functional on planar
  domains.
\newblock {\em Preprint\/} (2013).

\bibitem{HM}
{\sc Henao, D., and Majumdar, A.}
\newblock Symmetry of uniaxial global {L}andau-de {G}ennes minimizers in the
  theory of nematic liquid crystals.
\newblock {\em SIAM J. Math. Anal. 44}, 5 (2012), 3217--3241.

\bibitem{INSZ}
{\sc Ignat, R., Nguyen, L., Slastikov, V., and Zarnescu, A.}
\newblock Stability of the vortex defect in the {L}andau--de {G}ennes theory
  for nematic liquid crystals.
\newblock {\em C. R. Math. Acad. Sci. Paris 351}, 13-14 (2013), 533--537.

\bibitem{KVZ}
{\sc Kralj, S., Virga, E., and {\v{Z}}umer, S.}
\newblock Biaxial torus around nematic point defects.
\newblock {\em Phys. Rev. E 60}, 2 (1999), 1858.

\bibitem{KV}
{\sc Kralj, S., and Virga, E.~G.}
\newblock Universal fine structure of nematic hedgehogs.
\newblock {\em J. Phys. A 34}, 4 (2001), 829--838.

\bibitem{Lamy}
{\sc Lamy, X.}
\newblock A new light on the breaking of uniaxial symmetry in nematics.
\newblock {\em P2reprint\/} (2013).

\bibitem{Luck}
{\sc Luckhaus, S.}
\newblock Convergence of minimizers for the {$p$}-{D}irichlet integral.
\newblock {\em Math. Z. 213}, 3 (1993), 449--456.

\bibitem{Madsen}
{\sc Madsen, L.~A., Dingemans, T.~J., Nakata, M., and Samulski, E.~T.}
\newblock Thermotropic biaxial nematic liquid crystals.
\newblock {\em Phys. rev. letters 92}, 14 (2004), 145505.

\bibitem{MZ}
{\sc Majumdar, A., and Zarnescu, A.}
\newblock Landau-{D}e {G}ennes theory of nematic liquid crystals: the
  {O}seen-{F}rank limit and beyond.
\newblock {\em Arch. Ration. Mech. Anal. 196}, 1 (2010), 227--280.

\bibitem{Mermin}
{\sc Mermin, N.~D.}
\newblock The topological theory of defects in ordered media.
\newblock {\em Rev. Modern Phys. 51}, 3 (1979), 591--648.

\bibitem{Morrey}
{\sc Morrey, Jr., C.~B.}
\newblock The problem of {P}lateau on a {R}iemannian manifold.
\newblock {\em Ann. of Math. (2) 49\/} (1948), 807--851.

\bibitem{Moser}
{\sc Moser, R.}
\newblock {\em Partial regularity for harmonic maps and related problems}.
\newblock World Scientific Publishing Co. Pte. Ltd., Hackensack, NJ, 2005.

\bibitem{Prasad}
{\sc Prasad, V., Kang, S.-W., Suresh, K., Joshi, L., Wang, Q., and Kumar, S.}
\newblock Thermotropic uniaxial and biaxial nematic and smectic phases in
  bent-core mesogens.
\newblock {\em Journal of the American Chemical Society 127}, 49 (2005),
  17224--17227.

\bibitem{QSS}
{\sc Quarteroni, A., Sacco, R., and Saleri, F.}
\newblock {\em Numerical mathematics}, vol.~37 of {\em Texts in Applied
  Mathematics}.
\newblock Springer-Verlag, New York, 2000.

\bibitem{Sand}
{\sc Sandier, E.}
\newblock Lower bounds for the energy of unit vector fields and applications.
\newblock {\em J. Funct. Anal. 152}, 2 (1998), 379--403.
\newblock see Erratum, ibidem 171, 1 (2000), 233.

\bibitem{SU}
{\sc Schoen, R., and Uhlenbeck, K.}
\newblock A regularity theory for harmonic maps.
\newblock {\em J. Differential Geom. 17}, 2 (1982), 307--335.

\bibitem{Sch}
{\sc Schoen, R.~M.}
\newblock Analytic aspects of the harmonic map problem.
\newblock In {\em Seminar on nonlinear partial differential equations
  ({B}erkeley, {C}alif., 1983)}, vol.~2 of {\em Math. Sci. Res. Inst. Publ.}
  Springer, New York, 1984, pp.~321--358.

\bibitem{SS}
{\sc Schopohl, N., and Sluckin, T.~J.}
\newblock Defect core structure in nematic liquid crystals.
\newblock {\em Phys. Rev. Lett. 59\/} (Nov 1987), 2582--2584.

\bibitem{SKH}
{\sc Sonnet, A., Kilian, A., and Hess, S.}
\newblock Alignment tensor versus director: Description of defects in nematic
  liquid crystals.
\newblock {\em Phys. Rev. E 3\/} (1995), 2.

\bibitem{Str}
{\sc Struwe, M.}
\newblock On the asymptotic behavior of minimizers of the {G}inzburg-{L}andau
  model in {$2$} dimensions.
\newblock {\em Differential Integral Equations 7}, 5-6 (1994), 1613--1624.

\end{thebibliography}

\end{document}